\documentclass[12pt]{article}


\usepackage[T1]{fontenc}
\usepackage{lmodern}
\usepackage[utf8]{inputenc}

\usepackage[english]{babel}
\usepackage{fixltx2e,ellipsis}
\usepackage[tracking=true]{microtype}%
\DeclareMicrotypeSet*[tracking]{my}%
  { font = */*/*/sc/* }%
\SetTracking{ encoding = *, shape = sc }{ 45 }%

\usepackage[intlimits]{amsmath}
\usepackage{amsfonts}
\usepackage{amssymb}
\usepackage{enumerate}
\usepackage{times}

\makeatletter
\def\author#1{\gdef\autrun{\def\and{\unskip, }#1}\gdef\@author{#1}}
\def\address#1{{\def\and{\\\hspace*{18pt}}\renewcommand{\thefootnote}{}\footnote {#1}}}
\makeatother
\def\email#1{e-mail: #1}
\def\subjclass#1{{\renewcommand{\thefootnote}{}\footnote{\emph{Mathematics Subject Classification (2010):} #1}}}
\def\keywords#1{\par\medskip\noindent\textbf{Keywords.} #1}


\usepackage{amsthm}
\newtheorem{theorem}{Theorem}[section]
\newtheorem{lemma}[theorem]{Lemma}
\newtheorem{proposition}[theorem]{Proposition}
\newtheorem{corollary}[theorem]{Corollary}

\theoremstyle{definition}
\newtheorem*{definition}{Definition}

\newtheorem*{example*}{Example}

\newtheorem{remark}[theorem]{Remark}
\numberwithin{equation}{section}


\usepackage{graphicx}
\usepackage{tikz}
\usetikzlibrary{arrows,positioning}

\usepackage{mleftright}\mleftright
\usepackage[
	bookmarks=true,
	bookmarksnumbered=true,
	unicode=false,
	pdftoolbar=true,
	pdfmenubar=true,
	pdffitwindow=false,
	pdfstartview={FitH},
	pdftitle={},
	pdfauthor={},
	pdfsubject={},
	pdfcreator={},
	pdfproducer={},
	pdfkeywords={},
	pdfnewwindow=true,
	colorlinks=true,
	linkcolor=black,
	citecolor=black,
	filecolor=blue,
	urlcolor=black]{hyperref}

\urlstyle{same}

\makeatletter
	\def\@cite#1#2{[\textbf{#1}\if@tempswa , #2\fi]}	
\makeatother

\makeatletter
\newcommand\RedeclareMathOperator{%
  \@ifstar{\def\rmo@s{m}\rmo@redeclare}{\def\rmo@s{o}\rmo@redeclare}%
}
\newcommand\rmo@redeclare[2]{%
  \begingroup \escapechar\m@ne\xdef\@gtempa{{\string#1}}\endgroup
  \expandafter\@ifundefined\@gtempa
     {\@latex@error{\noexpand#1undefined}\@ehc}%
     \relax
  \expandafter\rmo@declmathop\rmo@s{#1}{#2}}
\newcommand\rmo@declmathop[3]{%
  \DeclareRobustCommand{#2}{\qopname\newmcodes@#1{#3}}%
}
\@onlypreamble\RedeclareMathOperator
\makeatother

\renewcommand{\phi}{\varphi}

\DeclareMathOperator{\B}{\mathbb{B}}
\DeclareMathOperator{\R}{\mathbb{R}}
\RedeclareMathOperator{\S}{\mathbb{S}}
\RedeclareMathOperator{\H}{\mathbb{H}}

\DeclareMathOperator*{\bd}{bd}
\DeclareMathOperator*{\conv}{conv}
\DeclareMathOperator{\interior}{int} 
\DeclareMathOperator{\relinterior}{rel int}
\DeclareMathOperator{\vol}{vol}
 
\DeclareMathOperator{\as}{as}
\DeclareMathOperator{\flag}{flag}
\RedeclareMathOperator{\vert}{vert}
\DeclareMathOperator{\face}{face}
\DeclareMathOperator{\id}{id}

\renewcommand{\d}{\,\mathrm{d}}

\begin{document}

\title{Flag Numbers and Floating Bodies}

\author{Florian Besau \and Carsten Schütt \and Elisabeth M.~Werner}

\date{}

\maketitle

\address{F.~Besau: Goethe-Universit\"at Frankfurt; \email{besau@math.uni-frankfurt.de} 
\and C.~Sch\"utt: Christian-Albrechts Universit\"at; \email{schuett@math.uni-kiel.de}
\and E.~M.~Werner: Case Western Reserve University; \email{elisabeth.werner@case.edu}}

\subjclass{Primary 52A38; Secondary 52A27, 52A55, 52B05, 52B60}

\begin{abstract}
	We investigate weighted floating bodies of polytopes. We show that the weighted volume depends on the complete flags of the polytope.
	This connection is obtained by introducing flag simplices, which translate between the metric and combinatorial structure.
	
	Our results are applied in spherical and hyperbolic space. This leads to new asymptotic results for polytopes in these spaces. We also provide explicit examples of spherical and hyperbolic convex bodies whose floating bodies behave completely different from any convex body in Euclidean space.

	\keywords{convex floating body of polytope, weighted floating body of polytope, total number of flags, flag simplex, spherical polytope, hyperbolic polytope}
\end{abstract}

A convex body $K$ in $\mathbb R^{n}$ is a compact, convex set with
nonempty interior. A \emph{convex floating body} $K_{\delta}$ is obtained from $K$ by cutting off all caps of a fixed volume $\delta>0$ \cite{BarLar:1988,SchuettWerner:1990}. 
Blaschke \cite{Blaschke:1923} proved a fundamental relation between the volume of floating bodies and the classical \emph{affine surface area} $\as(K)$ in dimension $2$ and $3$, which was later generalized by Leichtweiss \cite{Leichtweiss:1986a} to higher dimensions, for sufficiently smooth convex bodies $K$. By using the convex floating body $K_\delta$, in \cite{SchuettWerner:1990} the following extension of this relation to all convex bodies $K$ was established,
\begin{equation}\label{eqn:blaschke}
	\lim_{\delta\to 0^+}\frac{\vol_n(K)-\vol_{n}(K_{\delta})}{\delta^{\frac{2}{n+1}}} = c_n \, \as(K),
\end{equation}
where $c_n=(2\pi)^{\frac{1-n}{1+n}}\, \Gamma(\frac{n+3}{2})^{\frac{2}{1+n}}$.
Far reaching generalizations of affine surface areas have since been introduced, which include the $L_p$ affine surface areas \cite{Lutwak:1996, MeyerWerner:2000, Werner:2012, Zhao:2016} and more recently Orlicz affine surface areas \cite{HabPar:2014, LudRei:2010} (see also \cite{CagYe:2016,Ye:2016}). Furthermore, functional analogous were introduced \cite{AAKSW:2012, CFGLSW:2016, LSW:2017} and extensions of the convex floating body and \eqref{eqn:blaschke} were established in spherical \cite{BesWer:2015} and hyperbolic spaces \cite{BesWer:2016}. See also \cite{DKY:2018} for another recent success in extending Euclidean convex geometry to spherical and hyperbolic spaces. In  \cite{Werner:2002}, and also in \cite{BLW:2017}, \eqref{eqn:blaschke} was extended to weighted floating bodies, and a unifying framework to translate between different constant curvature spaces was introduced. Other important relatives of the floating body were considered in \cite{MordhorstWerner:2017, MordhorstWerner:2017a, SchuettWerner:2004}.

\smallskip
The volume derivative \eqref{eqn:blaschke} vanishes for polytopes and in \cite{BarLar:1988} the correct asymptotic order was determined. 
In \cite{Schuett:1991} it was shown that
\begin{equation}\label{eqn:deriv_inner_poly}
	\lim_{\delta\to 0^+} \frac{\vol_{n}(P)-\vol_{n}(P_{\delta})}{\delta\left(\ln\frac{1}{\delta}\right)^{n-1}} = \frac{\left|\flag(P)\right|}{n!\,n^{n-1}},
\end{equation}
where $\left|\flag(P)\right|$ is the \emph{total number of complete flags} (or towers) of $P$ (see Section 2.1 for the definition).

\medskip
In this article we generalize \eqref{eqn:deriv_inner_poly} to a weighted setting. Our main results are stated in the next section.
Our generalizations provide extensions of \eqref{eqn:deriv_inner_poly} to constant curvature spaces, in particular for $n$-dimensional spherical (Section 5.1) and hyperbolic space (Section 5.2). We also provide examples in spherical space and the hyperbolic plane, of convex bodies that realize asymptotic behavior of order $\delta$,  a behavior that is impossible for Euclidean convex bodies.

A major tool in our proofs is the concept of flag simplices which we introduce  in  Section 2. It is connected  with simplex subdivisions of convex polytopes, and should prove useful in other contexts. In particular, we believe that generalizations of bounds on the approximation of convex bodies by polytopes \cite{Schuett:1991}, and asymptotic results on random approximation of polytopes \cite{BarBu:1993} are now well within reach.

In Subsection 2.3 we give a brief survey on lower and upper bounds of the total number of complete flags. It is very intriguing, that the conjectured 
(by Kalai \cite{Kalai:1989}) minimizers  for the total number of complete flags are exactly the same as in Mahler's conjecture for the volume product. We hope that our generalization of \eqref{eqn:deriv_inner_poly} may shed new light on possible connections between the volume product of polytopes and the total number of flags.

\section{Statement of the Main Results}

Let $P$ be an $n$-polytope in $\R^n$, that is, $P$ is the convex hull of a finite number of points and such that $P$ has non-empty interior.
For continuous functions $\phi,\psi:P\to (0,\infty)$ we denote by $\Phi$, respectively $\Psi$, the measure with density $\phi$, respectively $\psi$, i.e., $\Phi(A) = \int_A \phi$ and $\Psi(A)=\int_A \psi$ for every Borel $A\subset P$.
The \emph{weighted floating body} is defined in \cite{Werner:2002} by
\begin{equation}\label{eqn:float_def_mu}
	P_\delta^\phi  = \bigcap \big\{ H^- : \Phi(P\cap H^+)\leq \delta \big\},
\end{equation}
for every sufficiently small $\delta>0$ (cf.\ \cite[Eq.\ (3)]{BLW:2017}).
Clearly, if $\phi\equiv 1$, then the weighted floating body is the convex floating body, i.e., $P_\delta^\phi=P_\delta$.

Our main theorem is the following.
\begin{theorem}\label{thm:main_theorem}
	Let $P$ be an $n$-dimensional convex polytope in $\R^n$ and let $\phi,\psi:P\to (0,\infty)$ be continuous functions. Then
	\begin{equation}\label{eqn:deriv_inner_poly_weighted}
		\lim_{\delta\to 0^+} \frac{\Psi(P)-\Psi(P_\delta^\phi)}{\delta \left(\ln\frac{1}{\delta}\right)^{n-1}} 
		= \sum_{v\in\vert P} \frac{\psi(v)}{\phi(v)} \frac{\left|\flag_v(P)\right|}{n!\,n^{n-1}},
	\end{equation}
	where $\vert P$ is the set of vertices of $P$ and $\left|\flag_v(P)\right|$ is the number of complete flags that have $v$ as a vertex.
\end{theorem}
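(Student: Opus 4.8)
The plan is to localize the computation near the vertices of $P$ and reduce to the unweighted case \eqref{eqn:deriv_inner_poly} by a continuity argument on the weights. First I would argue that the difference $\Psi(P) - \Psi(P_\delta^\phi)$ is concentrated, up to lower-order terms in $\delta(\ln\tfrac1\delta)^{n-1}$, in arbitrarily small neighborhoods of the vertices of $P$. Indeed, the set $P \setminus P_\delta^\phi$ is a union of caps of $\Phi$-measure at most $\delta$; away from the vertices (near relative interiors of lower-dimensional faces, or in the interior of $P$) the boundary $\bd P$ is flat, and for a flat piece of boundary the volume lost to floating is of order $\delta^{2/(n+1)}$ for the truncation-by-a-single-facet behavior, or — more relevantly here — standard estimates (as in \cite{Schuett:1991}, \cite{BarLar:1988}) show the contribution of any region bounded away from $\vert P$ is $O(\delta)$ without the logarithmic factor, hence negligible after dividing by $\delta(\ln\tfrac1\delta)^{n-1}$. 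So fix $\varepsilon > 0$ and cover each vertex $v$ by a small simplicial neighborhood $U_v$ on which, by continuity and positivity of $\phi,\psi$, we have $\tfrac{\psi(v)}{\phi(v)}(1-\varepsilon) \le \tfrac{\psi(x)}{\phi(x)} \le \tfrac{\psi(v)}{\phi(v)}(1+\varepsilon)$ and also $\phi, \psi$ are each within a factor $1\pm\varepsilon$ of their values at $v$.

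The second step is a change-of-weight comparison local to each $U_v$. On $U_v$ the weighted floating body with weight $\phi$ is sandwiched between the weighted floating bodies for the constant weights $\phi(v)(1\pm\varepsilon)$; but for a constant weight $c$, one has $P_\delta^{c} = P_{\delta/c}$, the ordinary floating body at level $\delta/c$. Likewise $\Psi$ restricted to $U_v$ is pinched between $\psi(v)(1\pm\varepsilon)$ times Lebesgue measure. Therefore the local contribution $\Psi(U_v \cap (P \setminus P_\delta^\phi))$ is pinched between
\[
	\psi(v)(1-\varepsilon)\,\bigl(\vol_n(U_v \cap P) - \vol_n\bigl(U_v \cap P_{\delta/(\phi(v)(1-\varepsilon))}\bigr)\bigr)
\]
and the analogous upper expression with $(1+\varepsilon)$. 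Dividing by $\delta(\ln\tfrac1\delta)^{n-1}$ and using \eqref{eqn:deriv_inner_poly} — localized to the vertex cone at $v$, which by the flag-simplex machinery of Section 2 contributes precisely $|\flag_v(P)|/(n!\,n^{n-1})$ — together with $\delta/(\phi(v)(1\pm\varepsilon)) \cdot (\ln\tfrac1{\delta/(\phi(v)(1\pm\varepsilon))})^{n-1} \sim \tfrac{1}{\phi(v)(1\pm\varepsilon)}\,\delta(\ln\tfrac1\delta)^{n-1}$ as $\delta\to0^+$, one gets that the local limit lies between $\tfrac{\psi(v)(1-\varepsilon)}{\phi(v)(1+\varepsilon)}\cdot\tfrac{|\flag_v(P)|}{n!\,n^{n-1}}$ and $\tfrac{\psi(v)(1+\varepsilon)}{\phi(v)(1-\varepsilon)}\cdot\tfrac{|\flag_v(P)|}{n!\,n^{n-1}}$. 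Summing over $v \in \vert P$ and letting $\varepsilon \to 0^+$ yields the claimed formula.

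The step I expect to be the main obstacle — and the one requiring the flag-simplex technology developed in Section 2 rather than a black-box citation — is making precise that \eqref{eqn:deriv_inner_poly} is \emph{local at each vertex}, i.e.\ that the total flag count $|\flag(P)|$ in \eqref{eqn:deriv_inner_poly} decomposes as $\sum_{v\in\vert P}|\flag_v(P)|$ with each summand governing exactly the volume lost to floating inside a small cone at $v$. This requires: (i) showing that near a vertex $v$ the floating body of $P$ agrees, up to the relevant order, with the floating body of the tangent cone at $v$; (ii) subdividing that cone into flag simplices and computing the contribution of each simplex — this is where a single flag simplex should contribute $1/(n!\,n^{n-1})$ to the normalized limit, explaining the combinatorial factor; and (iii) controlling the cross terms at the boundaries of the $U_v$'s and near lower-dimensional faces so that they do not contribute. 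A secondary technical point is uniformity of the estimates in $\delta$: the pinching above must hold with error terms that vanish after normalization uniformly as $\delta \to 0^+$, which follows from the monotonicity of $\delta \mapsto P_\delta^\phi$ and of the floating-body volume deficit in the level parameter, but should be stated carefully.
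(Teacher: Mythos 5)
Your proposal follows essentially the same route as the paper: concentration of $\Psi(P\setminus P_\delta^\phi)$ near the vertices (Proposition \ref{thm:main_theorem_weighted_upper}, phrased there via flag simplices), pinching $\phi,\psi$ by their values at $v$ on a small neighborhood so that the weighted quantities are compared with unweighted ones at rescaled levels $\delta/(\phi(v)(1\pm\varepsilon))$ (the paper's \eqref{eqn:wfloat_bound}, \eqref{eqn:a_eps_bound}, \eqref{eqn:phi_eps}), and a localized unweighted limit $\left|\flag_v(P)\right|/(n!\,n^{n-1})$ at each vertex supplied by the flag-simplex machinery (Lemma \ref{lem:eucl_first} and Proposition \ref{thm:main_theorem_weighted_local}). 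The differences are organizational: the paper computes the limit per flag simplex rather than per vertex neighborhood (using a symmetrization over the $n!$ flags at a vertex for the upper bound), and it explicitly handles the caps at $v$ that escape the neighborhood via the sets $B_i^+(\varepsilon)$ and $C^+(\varepsilon)$ — a point your blanket sandwiching claim for $P_\delta^\phi$ on $U_v$ glosses over but which you correctly flag under item (iii).
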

We will prove this theorem in Section 4.  In the next section we recall some basic facts from convex geometry and introduce the notion of \emph{flag simplices}. In Section 3 we  review parts of the proof \cite{Schuett:1991} of \eqref{eqn:deriv_inner_poly} and establish concentration results with respect to flag simplices. Finally, in Section 4 we introduce weight functions and employ the properties of flag simplices obtained in Section 2 to derive Theorem \ref{thm:main_theorem}.

The following special case of Theorem \ref{thm:main_theorem} is of particular interest:
\begin{corollary}\label{cor:main_theorem}
	Let $P$ be an $n$-dimensional convex polytope in $\R^n$ and let $\phi:P\to (0,\infty)$ be a continuous function. Then
	\begin{equation}\label{eqn:deriv_inner_poly_weighted_2}
		\lim_{\delta\to 0^+} \frac{\Phi(P) - \Phi(P_\delta^\phi)}{\delta \left(\ln\frac{1}{\delta}\right)^{n-1}} = \frac{\left|\flag(P)\right|}{n!\,n^{n-1}},
	\end{equation}
	where $\left|\flag(P)\right|$ is the total number of complete flags of $P$.
\end{corollary}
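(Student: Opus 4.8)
The plan is to obtain Corollary~\ref{cor:main_theorem} as the special case $\psi=\phi$ of Theorem~\ref{thm:main_theorem}, so essentially all the work is already done and only a small combinatorial bookkeeping step remains. First I would substitute $\psi=\phi$ into \eqref{eqn:deriv_inner_poly_weighted}: the left-hand side becomes precisely $\bigl(\Phi(P)-\Phi(P_\delta^\phi)\bigr)\big/\bigl(\delta(\ln\tfrac1\delta)^{n-1}\bigr)$, matching the left-hand side of \eqref{eqn:deriv_inner_poly_weighted_2}, while on the right-hand side every ratio $\psi(v)/\phi(v)$ equals $1$. Hence the limit equals $\tfrac{1}{n!\,n^{n-1}}\sum_{v\in\vert P}\bigl|\flag_v(P)\bigr|$.

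The only thing left to check is the identity $\sum_{v\in\vert P}\bigl|\flag_v(P)\bigr|=\bigl|\flag(P)\bigr|$. This follows from the definition of a complete flag recalled in Section~2.1: a complete flag is a maximal chain $F_0\subsetneq F_1\subsetneq\dots\subsetneq F_{n-1}$ of proper faces of $P$, so its minimal element $F_0$ must be a $0$-dimensional face, i.e.\ a single vertex. Consequently every complete flag lies in $\flag_v(P)$ for exactly one $v\in\vert P$, which means the family $\{\flag_v(P)\}_{v\in\vert P}$ is a partition of $\flag(P)$; summing cardinalities gives the stated identity. Plugging this into the previous display yields \eqref{eqn:deriv_inner_poly_weighted_2}.

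I do not anticipate any genuine obstacle here: all the analytic content — the existence of the limit and its evaluation — is contained in Theorem~\ref{thm:main_theorem}, and the corollary is purely the observation that taking the weight for the volume of the floating body equal to the weight defining the body itself makes the vertex weights cancel, together with the trivial remark that complete flags are indexed by their $0$-dimensional face. The single point deserving a sentence of care is precisely that a complete flag has a uniquely determined vertex, which is immediate from the definition of maximal chain.
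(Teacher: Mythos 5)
Your proposal is correct and is exactly the paper's (implicit) argument: the corollary is stated as the special case $\psi=\phi$ of Theorem~\ref{thm:main_theorem}, with the ratios $\psi(v)/\phi(v)$ collapsing to $1$ and the identity $\sum_{v\in\vert P}\left|\flag_v(P)\right|=\left|\flag(P)\right|$ following because each complete flag belongs to $\flag_v(P)$ for exactly the one vertex $v$ with $F_0=\{v\}$.
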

It is remarkable that the volume derivative of the weighted floating body, i.e., the left hand side of \eqref{eqn:deriv_inner_poly_weighted_2}, is  independent of the actual weight function. This  does not happen for the volume derivative of the weighted floating body for general convex bodies (cf.\ \cite [Thm.\ 5]{Werner:2002} and \cite[Thm.\ 1.1]{BLW:2017}). In view of \eqref{eqn:deriv_inner_poly_weighted}, one explanation for this behavior in the polytopal case might be, that the ``curvature'' is concentrated at the vertices of the polytope.

\smallskip
In Section 5 we apply Corollary \ref{cor:main_theorem} to spherical and hyperbolic space and derive the following theorems. There $\vol_n^s$, respectively  $\vol_n^h$,   denote the spherical, respectively hyperbolic, volume and $P_\delta^s$, respectively $P_\delta^h$, is the spherical, respectively hyperbolic, analogue of the convex floating body (see Section 5 for details).
\begin{theorem}\label{cor:main_theorem_sphere}
	If $P$ is a spherically convex polytope contained in an open halfsphere of the Euclidean unit sphere $\S^n\subset\R^{n+1}$, then
	\begin{equation}
		\lim_{\delta\to 0^+} \frac{\vol^s_n(P)- \vol_n^s(P_\delta^s)}{\delta\left(\ln\frac{1}{\delta}\right)^{n-1}} 
		= \frac{\left|\flag(P)\right|}{n!\,n^{n-1}}.
	\end{equation}
\end{theorem}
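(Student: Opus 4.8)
The plan is to transfer the Euclidean result of Corollary~\ref{cor:main_theorem} to the sphere by a suitable choice of weight function. Fix an open halfsphere containing $P$ and use the gnomonic (central) projection $g:\S^n_+\to\R^n$, which maps the open halfsphere bijectively onto $\R^n$ and sends spherically convex sets to Euclidean convex sets; in particular $g(P)$ is an $n$-dimensional Euclidean convex polytope, and $g$ sets up a bijection between the complete flags of $P$ and those of $g(P)$, so $|\flag(P)|=|\flag(g(P))|$. The first step is to identify the weight function $\phi$ on $g(P)$ whose weighted measure $\Phi$ equals the pushforward of spherical volume, i.e.\ $\Phi(A)=\vol^s_n(g^{-1}(A))$. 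This is the Jacobian of $g^{-1}$ and has a classical closed form, namely $\phi(x)=(1+|x|^2)^{-(n+1)/2}$ up to normalization; it is continuous and strictly positive on the compact set $g(P)$, so the hypotheses of Corollary~\ref{cor:main_theorem} are met.

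The second step is to check that the gnomonic projection intertwines the two floating-body constructions, i.e.\ that $g(P^s_\delta)=(g(P))^{\phi}_\delta$. The spherical floating body $P^s_\delta$ is obtained by intersecting the complements of all spherical caps (intersections of $P$ with closed hemispheres) of spherical volume at most $\delta$. Under $g$, hemispheres of $\S^n$ through the center of projection correspond to affine hyperplanes of $\R^n$, and a spherical cap $P\cap H^+_{\mathrm{sph}}$ maps to $g(P)\cap H^+$ for the corresponding Euclidean halfspace, with $\vol^s_n(P\cap H^+_{\mathrm{sph}})=\Phi(g(P)\cap H^+)$ by the choice of $\phi$. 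Hence the defining families of halfspaces in \eqref{eqn:float_def_mu} correspond to each other, and the intersection commutes with $g$. (One must be slightly careful that only hemispheres, not arbitrary spherically convex half-spaces, are used in the definition of $P^s_\delta$, and that these are exactly the ones appearing; this should be recorded as part of the definitions in Section~5.)

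With these two identifications in place, the conclusion is immediate:
\begin{equation*}
	\frac{\vol^s_n(P)-\vol^s_n(P^s_\delta)}{\delta(\ln\frac1\delta)^{n-1}}
	= \frac{\Phi(g(P))-\Phi((g(P))^{\phi}_\delta)}{\delta(\ln\frac1\delta)^{n-1}}
	\xrightarrow{\ \delta\to0^+\ } \frac{|\flag(g(P))|}{n!\,n^{n-1}}
	= \frac{|\flag(P)|}{n!\,n^{n-1}},
\end{equation*}
where the middle limit is Corollary~\ref{cor:main_theorem} and the last equality is the flag bijection. The main obstacle is not any single hard estimate — all the analytic work is already done in Corollary~\ref{cor:main_theorem} — but rather the bookkeeping of making the gnomonic dictionary precise: computing the pushforward density explicitly, verifying that spherical caps correspond exactly to Euclidean caps (including the behavior near the boundary of the halfsphere, which is harmless since $P$ is contained in a fixed open halfsphere and hence bounded away from the boundary), and confirming that the floating-body definitions in the two settings are set up so that the correspondence is literally an equality rather than merely an inclusion. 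Once the spherical volume element and the hemisphere/hyperplane correspondence are written down carefully in Section~5, the theorem follows with no further computation.
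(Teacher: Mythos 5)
Your proposal is correct and follows exactly the paper's route: gnomonic projection onto the tangent hyperplane, identification of the pushforward density $\phi^s(x)=(1+\|x\|^2)^{-\frac{n+1}{2}}$, the correspondence of great hyperspheres with affine hyperplanes (so that $g(P_\delta^s)=g(P)_\delta^{\phi^s}$), and then an application of Corollary~\ref{cor:main_theorem}. The only caveat is a slight imprecision in the phrase ``hemispheres through the center of projection''---what is needed, and what holds, is that \emph{all} halfspheres bounded by great hyperspheres meeting $\S^+_e$ correspond to affine halfspaces---but this does not affect the argument.
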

\begin{theorem}\label{cor:main_theorem_hyperbolic}
	If $P$ is a compact geodesically convex polytope in hyperbolic $n$-space $\H^n$, then
	\begin{equation}
		\lim_{\delta\to 0^+} \frac{\vol^h_n(P)- \vol_n^h(P_\delta^h)}{\delta\left(\ln\frac{1}{\delta}\right)^{n-1}} 
		= \frac{\left|\flag(P)\right|}{n!\,n^{n-1}}.
	\end{equation}
\end{theorem}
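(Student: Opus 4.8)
The plan is to reduce the statement to the Euclidean weighted result of Corollary \ref{cor:main_theorem} by passing to the projective (Beltrami--Klein) model of hyperbolic space. In this model $\H^n$ is identified with the open unit ball $\interior\B^n\subset\R^n$, geodesics become straight Euclidean segments, and totally geodesic hyperplanes are precisely the nonempty intersections of affine Euclidean hyperplanes with $\interior\B^n$; consequently hyperbolic halfspaces correspond to Euclidean halfspaces restricted to the ball. The hyperbolic volume measure is absolutely continuous with respect to Lebesgue measure on $\interior\B^n$, with continuous positive density
\[
	\phi(x)=\bigl(1-|x|^2\bigr)^{-\frac{n+1}{2}}.
\]
Under this identification a compact, $n$-dimensional, geodesically convex polytope $P\subset\H^n$ becomes a Euclidean $n$-polytope contained in some ball $r\B^n$ with $r<1$, the face lattices of the hyperbolic and the Euclidean polytope coincide (so $\left|\flag(P)\right|$ is unambiguous), and $\phi$ restricts to a continuous positive function on $P$. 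Writing $\Phi$ for the measure on $P$ with density $\phi$, one then has $\Phi(A)=\vol^h_n(A)$ for every Borel set $A\subseteq P$; in particular $\Phi(P)=\vol^h_n(P)$.

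The crucial step is to verify that under this identification the hyperbolic floating body $P_\delta^h$ coincides with the Euclidean weighted floating body $P_\delta^\phi$ of \eqref{eqn:float_def_mu}. By the correspondence of halfspaces above, a hyperbolic cap of $P$ of hyperbolic volume at most $\delta$ is exactly a set $P\cap H^+$ for a Euclidean halfspace $H^+$ with $\Phi(P\cap H^+)\le\delta$, and conversely; halfspaces whose bounding hyperplane misses $\interior P$ satisfy $H^-\supseteq P$ and contribute nothing on either side, so only genuine caps (whose bounding hyperplane meets $\interior P\subseteq\interior\B^n$, hence is a genuine hyperbolic hyperplane) are relevant. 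Since both floating bodies are defined as the intersection of the complementary halfspaces over all such caps, it follows that $P_\delta^h=P_\delta^\phi$ for all sufficiently small $\delta>0$; as $P_\delta^\phi\subseteq P\subseteq r\B^n$, this set always stays inside the model and no boundary issues arise. Hence $\vol^h_n(P_\delta^h)=\Phi(P_\delta^\phi)$.

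Putting these identities together and invoking Corollary \ref{cor:main_theorem} with the specific weight $\phi$ above gives
\[
	\lim_{\delta\to 0^+}\frac{\vol^h_n(P)-\vol^h_n(P_\delta^h)}{\delta\bigl(\ln\tfrac1\delta\bigr)^{n-1}}
	=\lim_{\delta\to 0^+}\frac{\Phi(P)-\Phi(P_\delta^\phi)}{\delta\bigl(\ln\tfrac1\delta\bigr)^{n-1}}
	=\frac{\left|\flag(P)\right|}{n!\,n^{n-1}},
\]
which is the assertion. I expect the main obstacle to be the careful bookkeeping in the previous paragraph: checking that it suffices to consider halfspaces whose bounding hyperplane meets $\interior P$, that the bijection between hyperbolic and Euclidean supporting halfspaces of such caps is exact, and that the floating body is well defined and contained in $P$ once $\delta$ is small; once this is in place, all of the analytic content is already carried by Corollary \ref{cor:main_theorem}. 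The spherical Theorem \ref{cor:main_theorem_sphere} is handled in exactly the same manner, replacing the Beltrami--Klein model by the central (gnomonic) projection of an open halfsphere of $\S^n$ onto $\R^n$, whose spherical volume density is $\bigl(1+|x|^2\bigr)^{-\frac{n+1}{2}}$ and under which great subspheres again become affine subspaces.
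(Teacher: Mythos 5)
Your proposal is correct and follows essentially the same route as the paper: Section 5.2 passes to the projective model $(\B^n,d_h)$, identifies the hyperbolic volume density $\phi^h(x)=(1-\|x\|^2)^{-\frac{n+1}{2}}$, observes that $P_\delta^h=P_\delta^{\phi^h}$ because totally geodesic hyperplanes are Euclidean hyperplanes in this model, and applies Theorem \ref{thm:main_theorem} with $\phi=\psi=\phi^h$ (i.e., Corollary \ref{cor:main_theorem}). Your extra care about halfspaces whose bounding hyperplane misses $\interior P$ is a correct elaboration of a point the paper treats as immediate.
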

In Section 5 we also give examples in spherical space and in the hyperbolic plane for convex subsets where the volume difference between the set and its floating body is of order $\delta$, something that is not possible in Euclidean space.

\section{Convex Polytopes and Flag Simplices}

An $n$-polytope $P$ is the convex hull of a finite number of points in $\R^n$ and such that $P$ has non-empty interior. We denote by $\mathcal{P}(\R^n)$ the space of $n$-polytopes in $\R^n$. As general reference on convex bodies we refer to \cite{Gardner:2006, Gruber:2007,Schneider:2014} and for references on convex polytopes we may recommend \cite{Ewald:1996, Grunbaum:2003, Ziegler:1995}.

\subsection{Faces and flags of a polytope}

In the following we write $\conv A$ for the \emph{convex hull} of $A\subset \R^n$ and the convex hull of a finite number of points $v_0,\dotsc,v_m\in \R^n$ is
\begin{equation*}
	[v_0,\dotsc, v_m] := \conv \{v_0,\dotsc,v_m\}.
\end{equation*}
In particular, $[x,y]$ will denote the closed affine segment spanned by $x,y\in \R^n$.
The standard orthonormal basis in $\R^n$ is denoted by $e_1,\dotsc,e_n$ and for convenience we set $e_0:=0$. The \emph{standard simplex} $T_n$ is defined by $T_n:=[e_0,\dotsc,e_n]$.

We recall the following basic definitions (cf.\ \cite[Sec.~2.1]{Schneider:2014}, \cite[Ch.~3]{Grunbaum:2003}, \cite[Ch.~2]{Ziegler:1995} or \cite[Sec.~I.4]{Ewald:1996}).
\begin{definition}[faces and complete flags]
	For $P\in\mathcal{P}(\R^n)$,  a \emph{face $F$ of $P$} is the intersection of $P$ with a supporting hyperplane $H$, i.e., $F=P\cap H$.
	We call $F$ a $i$-face, $i=0,\ldots,n-1$, of $P$ if $F$ spans a $i$-dimensional affine subspace. 
	The empty set and $P$ are  called  \emph{improper faces} of $P$ with dimension $-1$, respectively $n$.
	The set of all faces, respectively $i$-faces, of $P$ is denoted by $\face(P)$, respectively $\face_i(P)$.
	
	A \emph{complete flag of $P$} is an ordered sequence $\mathbf{F}=(F_0,\dotsc,F_{n-1})$ of faces $F_0\subset \dotsc \subset F_{n-1}$ with $F_i\in\face_i(P)$ for $i=0,\ldots,n-1$.
	The set of complete flags of $P$ is denoted by $\flag(P)$.
\end{definition}

We write $\vert(P)$ for the set of all vertices. Of course $\face_0(P) = \{ \{v\} : v\in \vert(P)\}$.
Given a fixed vertex $v\in\vert P$, we denote by $\face_i(P;v)$ the subset of $i$-faces of $P$ that contain $v$.
As usual we call $1$-faces edges and $(n-1)$-faces facets. 

The set $\face(P)$ of all proper and improper faces of $P$ is called the \emph{face-lattice}. It is a graded lattice with respect to the partial order induced by set inclusion on the faces and the rank function is given by the dimension of the face.
Note that what we call flag is always a \emph{complete flag} of $P$. Those  are also called \emph{towers} or \emph{full flags} in the literature, and they correspond to maximal chains in the face lattice.

\begin{remark}[$f$-vector and flag vector]
The $f$-vector 
$(f_0(P),\dotsc,f_{n-1}(P))\in\mathbb{N}^{n}$,
where $f_i(P)=\left|\face_i(P)\right|$,  denotes the number of all $i$-faces of $P$, is one of the most important notions in polyhedral combinatorics. Many questions on characterizing the set of all possible $f$-vectors of $n$-polytopes are still open. The two-dimensional case, $n=2$, is trivial and for $n=3$ a complete characterization was obtained by Steinitz in 1906 \cite{Steinitz:1906}. A characterization for the $f$-vector of simplicial $n$-polytopes, that is, polytopes whose facets are simplices, was conjectured by McMullen in 1971 \cite{McMullen:1971},  the so-called \emph{$g$-conjecture}. The proof of McMullen's conjecture was established by Billera and Lee \cite{BilleraLee:1980} and Stanley \cite{Stanley:1980}--a crowning achievement. The cases $n\geq 4$ are in general still open, although there is much progress in the lower dimensional cases, in particular for $n=4$.  See \cite{SjobergZiegler:2017,SjobergZiegler:2018} and the references therein.

To attack the higher dimensional cases a generalization of the $f$-vector has been introduced \cite{BayerBillera:1985}, the \textit{flag vector}, with entries $f_S(P)$. Here  $S=\{i_0,\dotsc,i_s\}\subset \{0,1,\dotsc,n-1\}:=[n]$, such that $i_0<\dotsc<i_s$, is the \emph{rank set} and $f_S(P)$ is the number of all sequences of faces $F_{0}\subset \dotsc \subset F_{s}$ such that $F_{j}\in\face_{i_j}(P)$ for $j=0,\dotsc,s$. Clearly $f_{\{i\}}(P)=f_i(P)$ and $f_{[n]}(P)=\left|\flag(P)\right|$ is exactly the number of complete flags.

Note that $\left|\flag(P)\right|=f_{[n]}(P)$ can be expressed by flag numbers with lower rank by the generalized Dehn--Sommerville equations, which were established by Bayer and Billera \cite{BayerBillera:1985}. In particular, for dimension $n=2$, we have $\left|\flag(P)\right|=2f_0(P)=2f_1(P)$, for $n=3$ we have $\left|\flag(P)\right|=4f_1(P)$, and for $n=4$ we have $\left|\flag(P)\right|= 4 f_{02}(P)$ (cf.\ \cite{Bayer:1987}).
Also, for every simplicial $n$-polytope $P$ we have $\left|\flag(P)\right| = n! f_{n-1}(P)$, and for every simple $n$-polytope $P$ we have $\left|\flag(P)\right|=n!f_0(P)$.
\end{remark}

\medskip
The following lemma is an easy exercise, but we include a short proof for the reader's convenience. It shows that the faces of a complete flag stack in a convex way.
\begin{lemma}\label{lem:flag_dissection}
	Let $P\in\mathcal{P}(\R^n)$, $(F_0,\dotsc,F_{n-1})\in\flag(P)$ and set $F_n:= P$. If $I\subset \{0,\dotsc,n\}$, then 
	\begin{equation}\label{eqn:dissect}
		\conv\left(\bigcup_{i\in I} \relinterior F_i\right) = \biguplus_{i\in I} \relinterior F_i,
	\end{equation}
	where by $\uplus$ we indicate that the sets are pairwise disjoint.
\end{lemma}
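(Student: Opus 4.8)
The plan is to unpack \eqref{eqn:dissect} into the two assertions it encodes and prove them in turn: first that the sets $\relinterior F_i$, $i\in I$, are pairwise disjoint (this is exactly what the symbol $\uplus$ records), and second that their union $U:=\bigcup_{i\in I}\relinterior F_i$ is already convex, so that $\conv U=U$. Since the inclusion $\biguplus_{i\in I}\relinterior F_i\subseteq\conv\big(\bigcup_{i\in I}\relinterior F_i\big)$ is trivial, these two facts together give \eqref{eqn:dissect}, and only the convexity of $U$ will require real work.

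For the disjointness I would simply invoke the classical fact that a convex polytope is the disjoint union of the relative interiors of its faces, the improper face $P=F_n$ included, with the convention $\relinterior F_n=\interior P$ (see e.g.\ \cite[Sec.~2.1]{Schneider:2014}). Since $\dim F_i=i$ for $i\le n-1$ and $\dim F_n=n$, the faces occurring in the flag are pairwise distinct, hence so are their relative interiors, and in particular those indexed by $I$ are pairwise disjoint. A useful byproduct is that every point of $U$ lies in \emph{exactly one} of the sets $\relinterior F_i$, $i\in I$, which makes the index bookkeeping in the next step unambiguous.

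The substance of the lemma is the convexity of $U$, and this is where the chain structure of a flag is used. Given $x,y\in U$, I would pick the unique $a,b\in I$ with $x\in\relinterior F_a$ and $y\in\relinterior F_b$, and, after possibly swapping $x$ and $y$, assume $a\le b$. Because $(F_0,\dotsc,F_{n-1},F_n)$ is a chain, $a\le b$ forces $F_a\subseteq F_b$, so in particular $x\in F_b$. Now apply the standard segment lemma for relative interiors of convex sets (e.g.\ \cite[Thm.~1.1.14]{Schneider:2014}): since $F_b$ is a closed convex set, $x\in F_b$ and $y\in\relinterior F_b$, one has $(1-t)x+ty\in\relinterior F_b$ for all $t\in(0,1]$. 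Together with $x\in\relinterior F_a$ (the case $t=0$), this shows $[x,y]\subseteq\relinterior F_a\cup\relinterior F_b\subseteq U$. As $x,y\in U$ were arbitrary, $U$ is convex, which finishes the proof; note that no case distinction on $I$ (not even $I=\emptyset$, for which $U=\emptyset$) is needed.

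I do not anticipate a genuine obstacle: the statement is morally just the assertion that a ``staircase'' of relative interiors of nested faces is convex, and its only nontrivial inputs are the elementary relative-interior segment lemma and the partition of a polytope into the relative interiors of its faces. The one point deserving care is the reduction ``\,$a\le b$, hence $F_a\subseteq F_b$\,'': here one must use that the $F_i$ form a \emph{chain}, not merely a collection of faces, and that the indices $a,b$ are well defined, which is why I would establish disjointness first. If one wishes to avoid citing the face-partition fact, the disjointness can instead be derived directly from the defining property of faces — a relatively open segment of $P$ that meets a face $F$ lies in $F$, and a proper face of a polytope lies in its relative boundary — but invoking the standard references is cleaner.
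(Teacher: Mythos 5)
Your proposal is correct. It differs from the paper's argument mainly in structure: the paper proceeds by induction on $|I|$, peeling off the top face $F_{i_0}$ with $i_0=\max I$, observing that $\biguplus_{j\in I\setminus\{i_0\}}\relinterior F_j$ is a convex set contained in $F_{i_0}\setminus\relinterior F_{i_0}$, and then concluding that adjoining $\relinterior F_{i_0}$ keeps the union convex. You instead verify convexity of $U=\bigcup_{i\in I}\relinterior F_i$ directly: for $x\in\relinterior F_a$, $y\in\relinterior F_b$ with $a\le b$, the chain property gives $x\in F_b$, and the standard segment lemma places $(1-t)x+ty$ in $\relinterior F_b$ for $t\in(0,1]$, so $[x,y]\subset\relinterior F_a\cup\relinterior F_b\subset U$. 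Both proofs rest on the same geometric fact (segments from the closure of a convex set into its relative interior stay in the relative interior), but your two-point check avoids the induction bookkeeping, while the paper's induction makes the pairwise disjointness fall out of the same computation rather than being imported from the face-partition theorem as you do. One cosmetic remark: the relative-interior segment lemma in Schneider is Lemma 1.1.9 rather than Theorem 1.1.14, but this does not affect the argument.
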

\begin{proof}
	Proof by induction on $k:=|I|$. The case $k=1$ is trivial. Assume that the statement holds true for $k-1$.
	Let $I\subset \{0,\dotsc,n\}$ with $|I| = k$. Set $i_0:=\max \{i:i\in I\}$ and $J:=I\setminus\{i_0\}$. By the induction hypothesis, since $|J|=|I|-1=k-1$, we have that $C:=\biguplus_{j\in J}\relinterior F_j$ is convex.
	Since
	\begin{equation*}
		F_i\subset F_{i+1}\setminus \relinterior F_{i+1},
	\end{equation*}
	for all $i=0,\dotsc,n-1$, we conclude $\relinterior F_i\subset F_{i_0} \setminus \relinterior F_{i_0}$ for all $i\in J$, since $i<i_0$.
	This yields $C\cap \relinterior F_{i_0} = \emptyset$ and $C\cup \relinterior F_{i_0} \subset F_{i_0}$. Since $C$ and $\relinterior F_{i_0}$ are convex, we conclude
	\begin{equation*}
		\conv(C\cup \relinterior F_{i_0}) = C\uplus \relinterior F_{i_0} = \biguplus_{i\in I} \relinterior F_i. \qedhere
	\end{equation*}
\end{proof}

\subsection{Flag simplices}
In this subsection we develop the notion of flag simplices.
Let us start with an introductory example: the \emph{barycenter subdivision} of a convex polytope $P\in\mathcal{P}(\R^n)$.
This subdivision generates a simplicial complex associated with $P$, i.e., it gives a set of $n$-simplices $S_i$, $i=1,\dotsc,m$, such that 
\begin{enumerate}
	\item[i)] $S_i\cap S_j$ is either empty or a common face of both simplices, and
	\item[ii)] $P=\bigcup \{S_i:i=1,\dotsc,m\}$.
\end{enumerate}
In particular, i) implies that $\interior S_i\cap \interior S_j=\emptyset$ if $i\neq j$. One may define the barycenter subdivision inductively as follows:
If $n=1$, then $P$ is just a segment $[a,b]\subset \R$ and the barycenter subdivision is given by the segments $[a,c]$ and $[c,b]$ where $c=\frac{a+b}{2}$ is the barycenter (midpoint) of the segment.

Now if $n>1$, we again set $c$ as the barycenter (centroid) of $P$ and for any facet of $P$ we apply the barycenter subdivision in the corresponding affine hyperplane. Then the barycenter subdivision of $P$ is the set of all $n$-simplices that are obtained as the convex hull of $c$ and the $(n-1)$-simplices that decompose the facets (see Figure \ref{fig:barycenter}). The important observation here is, that there is a bijection between 
the simplices constructed in this way and 
 the complete flags of $P$.  In particular,  $m=\left|\flag(P)\right|$.

\begin{figure}[t]
	\centering
	\begin{tikzpicture}
		\draw[thick, fill=black!10] (0,0) -- (3,1) -- (2,4) -- (-2,4) -- (-3,3) -- cycle;
		\draw (0,0) -- (0.21,2.29) -- (3,1);
		\draw (2,4) -- (0.21,2.29) -- (-2,4);
		\draw (-3,3) -- (0.21,2.29);
		\draw (1.5,0.5) -- (0.21,2.29) -- (2.5,2.5);
		\draw (0,4) -- (0.21,2.29) -- (-2.5,3.5);
		\draw (-1.5,1.5) -- (0.21,2.29);
	\end{tikzpicture}
	\caption{Barycenter subdivision of a convex polygon.}
	\label{fig:barycenter}
\end{figure}
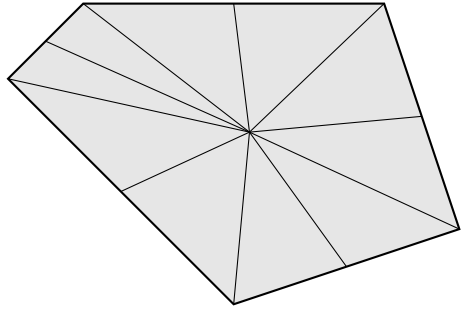

Of course, instead of the barycenter one may consider any sequence of interior points in the $i$-dimensional faces of $P$ and obtain another subdivision of $P$. To be more precise, for an $n$-polytope the barycenter subdivision is determined by the centroids in the faces, i.e., the map $\mathcal{C}^P:\face(P)\to P$, where $\mathcal{C}^P(F)$ is the centroid of the proper face $F$, completely describes the subdivision ($\mathcal{C}^P(\emptyset)$ may be chosen arbitrarily). The simplices in the subdivision are determined by
\begin{equation}\label{eqn:simplex_sub}
	S(\mathcal{C}^P,\mathbf{F}) = [\mathcal{C}^P(F_0),\dotsc,\mathcal{C}^P(F_{n-1}),\mathcal{C}^P(P)],
\end{equation}
for $\mathbf{F}=(F_0,\dotsc,F_{n-1})\in \flag(P)$. In the same way, any map $\mathcal{S}^P:\face(P)\to P$ with the property that $\mathcal{S}^P(F)\in\relinterior F$ for all $F\in\face(P)$, $F\neq \emptyset$, determines a subdivision of $P$ into a simplicial complex where the simplices $S(\mathcal{S}^P,\mathbf{F})$ are determined by \eqref{eqn:simplex_sub} and enumerated by the flags $\mathbf{F}$ of $P$. 
The barycenter subdivision $\mathcal{C}^P$ and,  more generally,  subdivisions $\mathcal{S}^P$ are  particular  triangulations of $P$.  An extensive exposition on general triangulations and subdivision can be found in the book \cite{DeLRS:2010}.
The simplices that appear in barycenter-type subdivisions $\mathcal{S}^P$ are the main inspiration behind the following definition.

\begin{definition}[flag simplex]
	For $P\in\mathcal{P}(\R^n)$ we call a simplex $S=[v_0,\dotsc,v_n]$ a \emph{flag simplex of $P$}, if and only if,
	there is a flag $(F_0,\dotsc,F_{n-1})\in\flag(P)$ and a permutation $\sigma$ of $\{0,\dotsc, n\}$ such that
	\begin{equation*}
		v_{\sigma(i)} \in \relinterior F_i,
	\end{equation*}
	for $i=0,\dotsc,n-1$ and $v_{\sigma(n)} \in \interior P$.
\end{definition}

Since an $n$-polytope $P$ is dissected by the relative interior of its faces, i.e., $P=\biguplus \{ \relinterior F : F\in\face(P)\}$ (cf.~\cite[Thm.~2.1.2]{Schneider:2014}), for every vertex $v_i$ of the flag simplex, there is a uniquely determined face $F_i$ such that $v_i\in\relinterior F_i$. Thus, if $S$ is a flag simplex of $P$, then the associated flag $(F_0,\dotsc,F_{n-1})$ and the permutation $\sigma$ are uniquely determined. 
So,  given a flag simplex $S$,  we call $(F_0,\dotsc, F_{n-1})$ the \emph{flag determined by $S$} and in the sequel we always assume that the vertices of a flag simplex $S=[v_0,\dotsc,v_n]$ are ordered, such that 
\begin{equation*}
	v_i\in\relinterior F_i,\quad \text{for $i=0,\dotsc,n-1$,}
\end{equation*}
and $v_n\in \interior P$ (see Figure \ref{fig:flag_simplex}).

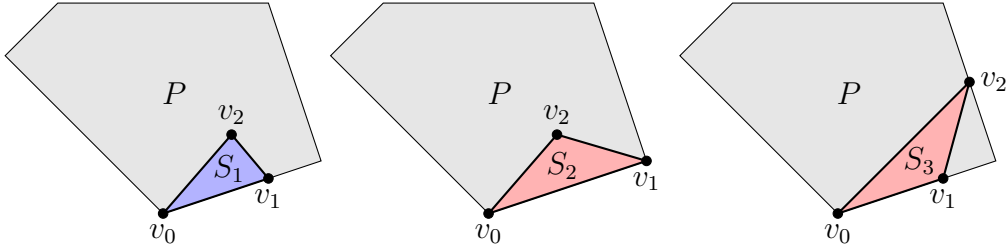
\begin{figure}[t]
	\centering
	\hfill
	\begin{tikzpicture}[scale=0.7]
		\draw[fill=black!10] (0,0) -- (3,1) -- (2,4) -- (-2,4) -- (-3,3) -- cycle;
		\draw[thick, fill=blue!30] (0,0) -- (2,{2/3}) -- (1.3,1.5)--cycle;
		\draw[fill=black] (0,0) circle(0.09);
		\node[below] at (0,0) {$v_0$};
		\draw[fill=black] (2,{2/3}) circle(0.09);
		\node[below] at (2,{2/3}) {$v_1$};
		\draw[fill=black] (1.3,1.5) circle(0.09);
		\node[above] at (1.3,1.5) {$v_2$};
		\node at (0.21,2.29) {$P$};
		\node at (1.25,{5/9+0.3}) {$S_1$};
	\end{tikzpicture}\hfill
	\begin{tikzpicture}[scale=0.7]
		\draw[fill=black!10] (0,0) -- (3,1) -- (2,4) -- (-2,4) -- (-3,3) -- cycle;
		\draw[thick, fill=red!30] (0,0) -- (3,1) -- (1.3,1.5)--cycle;
		\draw[fill=black] (0,0) circle(0.09);
		\node[below] at (0,0) {$v_0$};
		\draw[fill=black] (3,1) circle(0.09);
		\node[below] at (3,1) {$v_1$};
		\draw[fill=black] (1.3,1.5) circle(0.09);
		\node[above] at (1.3,1.5) {$v_2$};
		\node at (0.21,2.29) {$P$};
		\node at (1.4,{5/9+0.35}) {$S_2$};
	\end{tikzpicture}\hfill
	\begin{tikzpicture}[scale=0.7]
		\draw[fill=black!10] (0,0) -- (3,1) -- (2,4) -- (-2,4) -- (-3,3) -- cycle;
		\draw[thick, fill=red!30] (0,0) -- (2,{2/3}) -- (2.5,2.5)--cycle;
		\draw[fill=black] (0,0) circle(0.09);
		\node[below] at (0,0) {$v_0$};
		\draw[fill=black] (2,{2/3}) circle(0.09);
		\node[below] at (2,{2/3}) {$v_1$};
		\draw[fill=black] (2.5,2.5) circle(0.09);
		\node[right] at (2.5,2.5) {$v_2$};
		\node at (0.21,2.29) {$P$};
		\node at (1.55,1.) {$S_3$};
	\end{tikzpicture}\hfill $ $
	\caption{The $2$-simplex $S_1$ is a flag simplex of the polygon $P$, but $S_2$ and $S_3$ are not.}
	\label{fig:flag_simplex}
\end{figure}

Flag simplices may also be characterized as $n$-simplices that meet the relative interior of a uniquely determined flag.
\begin{proposition}\label{prop:flag simplex}
	An $n$-simplex $S\subset P$ is a flag-simplex of $P$ if and only if 
	\begin{enumerate}
		\item[i)] for every $i\in\{0,\dotsc,n-1\}$ there is a unique $i$-face $F_i^*$ such that $\relinterior F_i^*\cap S\neq \emptyset$, and
		\item[ii)] $(F_0^*,\dotsc,F_{n-1}^*)\in\flag(P)$.
	\end{enumerate}
	In particular, if $S=[v_0,\dotsc,v_n]$ is a flag simplex associated with the flag $(F_0,\dotsc,$ $F_{n-1})\in\flag(P)$ and such that $v_i\in\relinterior F_i$ for $i=0,\dotsc n$, where $F_n:=P$, then
	\begin{equation}\label{eqn:simplex-flag}
		[v_0,\dotsc,v_i] \subset \biguplus_{j=0}^i \relinterior F_j
	\end{equation}
	for $i=0,\dotsc,n$.
\end{proposition}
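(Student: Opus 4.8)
The plan is to prove the biconditional in both directions, establishing the displayed inclusion \eqref{eqn:simplex-flag} first, since it also drives the ``only if'' direction. With the vertices of the flag simplex $S=[v_0,\dots,v_n]$ ordered so that $v_i\in\relinterior F_i$ for $i=0,\dots,n$ (with $F_n:=P$), \eqref{eqn:simplex-flag} is immediate from Lemma~\ref{lem:flag_dissection}: taking $I=\{0,\dots,i\}$ there gives $\conv\bigl(\bigcup_{j=0}^i\relinterior F_j\bigr)=\biguplus_{j=0}^i\relinterior F_j$, and since $v_0,\dots,v_i$ all lie in $\bigcup_{j=0}^i\relinterior F_j$, so does their convex hull $[v_0,\dots,v_i]$.

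For the ``only if'' direction, existence of $F_i^*$ in i) is trivial, since $v_i\in\relinterior F_i\cap S$. For uniqueness, suppose $G$ is an $i$-face with $\relinterior G\cap S\neq\emptyset$ and pick $x$ in that intersection; by \eqref{eqn:simplex-flag} with $i=n$ we have $x\in S\subseteq\biguplus_{j=0}^n\relinterior F_j$, so $x\in\relinterior F_k$ for some $k$, and since the relative interiors of distinct faces of $P$ are pairwise disjoint (the partition $P=\biguplus_F\relinterior F$), $G=F_k$; comparing dimensions then forces $k=i$. Hence $F_i^*=F_i$, and ii) holds because $(F_0,\dots,F_{n-1})\in\flag(P)$.

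The ``if'' direction is the substantial part. Using the partition $P=\biguplus_F\relinterior F$ again, every vertex $v_j$ of $S$ lies in the relative interior of a unique face $G_j$, which is moreover the smallest face of $P$ containing $v_j$; if $\dim G_j=d<n$, then $v_j\in\relinterior G_j\cap S$ and the uniqueness in i) force $G_j=F_d^*$, while otherwise $G_j=P=:F_n^*$. So $G_j=F_{i(j)}^*$ for a well-defined $i(j)\in\{0,\dots,n\}$, and it remains to show $j\mapsto i(j)$ is a bijection of $\{0,\dots,n\}$; since both sets have $n+1$ elements, surjectivity suffices. Fix $i\in\{0,\dots,n\}$. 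There is a point $x_i\in\relinterior F_i^*\cap S$ — by i) when $i<n$, and because $\interior S\subseteq\interior P$ when $i=n$ — and writing $x_i=\sum_{j\in J_i}\lambda_j v_j$ with all $\lambda_j>0$, the face property (a positive convex combination lying in a face of $P$ has all of its terms in that face) gives $v_j\in F_i^*$, hence $G_j\subseteq F_i^*$ and $i(j)\le i$, for every $j\in J_i$. If in addition $i\ge 1$ and every $j\in J_i$ had $i(j)<i$, then, the flag being a chain, each such $v_j$ would lie in $F_{i-1}^*$, so $x_i\in F_{i-1}^*$ by convexity — impossible, as $F_{i-1}^*$ is a proper face of $F_i^*$ and hence disjoint from $\relinterior F_i^*$. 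So some $j\in J_i$ has $i(j)=i$ (automatic when $i=0$, where $i(j)\le i$ already forces $i(j)=0$), $i(\cdot)$ is onto, and defining $\sigma(i)$ as the unique $j$ with $i(j)=i$ exhibits $S$ as a flag simplex with associated flag $(F_0^*,\dots,F_{n-1}^*)$: indeed $v_{\sigma(i)}\in\relinterior F_i^*$ for $i<n$ and $v_{\sigma(n)}\in\interior P$.

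I expect the crux to be this bijectivity of $j\mapsto i(j)$, i.e.\ showing that each of $F_0^*,\dots,F_{n-1}^*$ and $P$ itself contains exactly one vertex of $S$ in its relative interior; its key ingredients are the partition of $P$ into relative interiors of faces, the face property of positive convex combinations, and the chain structure of the flag. Everything else — the ``only if'' direction, \eqref{eqn:simplex-flag}, and the dimension counts — is routine bookkeeping.
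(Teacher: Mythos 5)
Your proof is correct. The forward direction and the inclusion \eqref{eqn:simplex-flag} are handled exactly as in the paper, via Lemma \ref{lem:flag_dissection} and the dissection of $P$ into the relative interiors of its faces. Where you genuinely diverge is the converse: the paper disposes of it in one sentence, asserting ``by induction'' that a suitable permutation $\sigma$ exists and again citing Lemma \ref{lem:flag_dissection}, whereas you construct the permutation directly by assigning to each vertex $v_j$ the unique face $G_j$ with $v_j\in\relinterior G_j$ and proving that $j\mapsto G_j$ is a bijection onto $\{F_0^*,\dotsc,F_{n-1}^*,P\}$. Your surjectivity argument --- take $x_i\in\relinterior F_i^*\cap S$, use the extremality of faces under positive convex combinations to force the supporting vertices of $x_i$ into $F_i^*$, and rule out that they all fall into $F_{i-1}^*$ --- supplies exactly the pigeonhole step that the paper's sketch leaves implicit, at the cost of invoking one extra standard fact (a face of $P$ contains every point of $P$ that appears with positive weight in a convex combination landing in that face) beyond Lemma \ref{lem:flag_dissection} and the chain structure of the flag. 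Both routes are sound; yours is the more explicit and self-contained write-up of the converse, while the paper's version keeps the whole proposition as a formal consequence of the single dissection lemma.
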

\begin{proof}
	Let $S=[v_0,\dotsc,v_n]$ be a flag simplex with flag $(F_0,\dotsc,F_{n-1})\in\flag(P)$. Then $v_i \in S\cap \relinterior F_i$ for $i=0,\dotsc,n-1$ and $v_n\in \interior P$. Thus, by Lemma \ref{lem:flag_dissection},
	\begin{equation*}
		S=[v_0,\dotsc,v_n] \subset \conv\left(\interior P \cup \bigcup_{i=0}^{n-1} \relinterior F_i\right) 
		= \interior P\uplus \biguplus_{i=0}^n \relinterior F_i,
	\end{equation*}
	and therefore $S$ only meets the relative interior of the $i$-face $F_i$ for $i=0,\dotsc,n-1$.
	
	For the converse assume that $S\subset P$ is an $n$-simplex spanned by the vertices $v_0,\dotsc,v_n$ and assume that $S$ satisfies the conditions i) and ii). We set $F_n^*=P$. By induction we  show that there is a permutation $\sigma$ of $\{0,\dotsc,n\}$ such that $v_{\sigma(i)}\in\relinterior F_i^*$, for $i=0,\dotsc,n-1$ and $v_{\sigma(n)}\in \interior P$, as  by Lemma \ref{lem:flag_dissection}, 
	\begin{equation*}
		[v_{\sigma(0)},\dotsc,v_{\sigma(i)}] \subset \conv(\bigcup_{j=0}^i \relinterior F_j^*) = \biguplus_{j=0}^i \relinterior F_j^*,
	\end{equation*}
	for $i=0,\dotsc,n$. Hence, $S$ is a flag simplex with flag $(F_0^*,\dotsc,F_{n-1}^*)$.
\end{proof}

Notice that \eqref{eqn:simplex-flag} implies that for a flag simplex $S=[v_0,\dotsc,v_n]$, with associated $(F_0,\dotsc,F_{n-1})\in\flag(P)$, 
the faces of $S$ defined by
\begin{equation*}
	S_i := [v_0,\dotsc,v_i], \quad \text{for $i=0,\dotsc,n-1$},
\end{equation*}
are $i$-simplices such that 
\begin{enumerate}
	\item[i)] $\relinterior S_i \subset \relinterior F_i$, for $i=0,\dotsc,n-1$, and
	\item[ii)] $(S_0,\dotsc,S_{n-1})\in\flag(S)$.
\end{enumerate}

In the next proposition we establish two important properties for the set of all flag simplices associated with a fixed flag of $P$.
\begin{proposition}\label{prop:flag simplices}
	Let $P\in\mathcal{P}(\R^n)$ and $\mathbf{F}\in\flag(P)$.
	\begin{enumerate}
		\item[i)] 
		If $S_1$ and $S_2$ are two flag simplices of $P$ associated with $\mathbf{F}$, then there exists another flag simplex $S$ of $P$ such that $S \subset S_1\cap S_2$.
		\item[ii)] If $S$ is a flag simplex of $P$ associated with $\mathbf{F}$, then there exists a flag simplex $T$ of $P$ with $S\subset T\subset P$ and $S$ is a flag simplex of $T$.
	\end{enumerate}
\end{proposition}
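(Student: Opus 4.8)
The plan is to handle the two statements separately, via a common reduction. First observe that, since $F_0$ is a vertex, $\relinterior F_0=F_0$ is a single point, so every flag simplex of $P$ associated with $\mathbf F=(F_0,\dots,F_{n-1})$ has the same $0$-th vertex $v_0$; and, conversely, any points $w_0,\dots,w_n$ with $w_i\in\relinterior F_i$ for $i=0,\dots,n$ (where $F_n:=P$) are automatically affinely independent, since $\operatorname{aff}\{w_0,\dots,w_{i-1}\}\subseteq\operatorname{aff} F_{i-1}$ while $w_i\in\relinterior F_i$ and $F_i\cap\operatorname{aff} F_{i-1}=F_{i-1}$ force $w_i\notin\operatorname{aff} F_{i-1}$, so $[w_0,\dots,w_n]$ is then a flag simplex of $P$ associated with $\mathbf F$. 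Thus for part i) it suffices to produce, for every $i$, a point $w_i\in\relinterior F_i\cap S_1\cap S_2$, and for part ii) to produce points $t_i\in\relinterior F_i$ that nest $S$ inside $T=[t_0,\dots,t_n]$ in the prescribed way. The tool for i) will be the following claim: \emph{if $Q$ is an $m$-polytope and $S_1',S_2'$ are flag simplices of $Q$ associated with one and the same flag $\mathbf G\in\flag(Q)$, then $\relinterior S_1'\cap\relinterior S_2'\neq\emptyset$.}

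I would prove this claim by induction on $m$, the case $m=1$ being immediate (two segments issuing from their common endpoint into $\relinterior Q$). For $m>1$, write $S_k'=[v_0^k,\dots,v_m^k]$ with $v_0^1=v_0^2$ and $v_i^k\in\relinterior G_i$; the facets $\bar S_k:=[v_0^k,\dots,v_{m-1}^k]$ opposite the apices are flag simplices of the facet $G_{m-1}$ associated with $(G_0,\dots,G_{m-2})$, so by induction there is $p\in\relinterior\bar S_1\cap\relinterior\bar S_2$. By \eqref{eqn:simplex-flag} and the discussion following Proposition \ref{prop:flag simplex} one has $\relinterior\bar S_k\subseteq\relinterior G_{m-1}$, so $p$ lies in the relative interior of the facet $G_{m-1}$ of $Q$ and on no other facet; letting $\nu$ be the outer unit normal of $Q$ along $G_{m-1}$, this gives $p-t\nu\in\interior Q$ for all sufficiently small $t>0$. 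Moreover, since $p$ lies in the relative interior of the facet $\bar S_k$ of the simplex $S_k'$ — and $\bar S_k$ is the only facet of $S_k'$ containing $p$, while the remaining vertex $v_m^k$ lies strictly on the interior side of $\operatorname{aff} G_{m-1}$ — the simplex $S_k'$ coincides, in a neighbourhood of $p$, with the closed halfspace bounded by $\operatorname{aff} G_{m-1}$ in which $Q$ lies; hence $p-t\nu\in\interior S_1'\cap\interior S_2'$ for all small $t>0$, which proves the claim.

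Part i) follows at once: for each $i=0,\dots,n$ apply the claim to the $i$-polytope $F_i$, the flag $(F_0,\dots,F_{i-1})\in\flag(F_i)$, and the two $i$-dimensional faces $[v_0^1,\dots,v_i^1]$, $[v_0^2,\dots,v_i^2]$ of $S_1$, $S_2$ (which are flag simplices of $F_i$ for that flag, by \eqref{eqn:simplex-flag} and the definition of flag simplex); any point $w_i$ in the intersection of their relative interiors lies in $\relinterior F_i\cap S_1\cap S_2$, so $S:=[w_0,\dots,w_n]$ is a flag simplex of $P$ associated with $\mathbf F$, and $S\subseteq S_1\cap S_2$ because the latter set is convex and contains all the $w_i$. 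For part ii) I would build $T=[t_0,\dots,t_n]$ recursively: put $t_0:=v_0$, and for $i=1,\dots,n$, having chosen $t_0,\dots,t_{i-1}$ with centroid $c_{i-1}$, put $t_i:=v_i+s_i(v_i-c_{i-1})$ with $s_i>0$ small enough that $t_i\in\relinterior F_i$ — possible since $c_{i-1}\in F_{i-1}\subseteq F_i$ puts $t_i\in\operatorname{aff} F_i$ and $t_i\to v_i$ as $s_i\to0$, reading $\relinterior F_n=\interior P$ for $i=n$. Then $v_i=\tfrac{1}{1+s_i}t_i+\tfrac{s_i}{i(1+s_i)}\sum_{k=0}^{i-1}t_k$ is a strict convex combination of $t_0,\dots,t_i$, so $v_i\in\relinterior[t_0,\dots,t_i]\subseteq T$; hence $S\subseteq T\subseteq P$, and the flag $([t_0],[t_0,t_1],\dots,[t_0,\dots,t_{n-1}])\in\flag(T)$ together with $v_i\in\relinterior[t_0,\dots,t_i]$ for $i<n$ and $v_n\in\interior T$ exhibit $S$ as a flag simplex of $T$, while $T$ itself is a flag simplex of $P$ associated with $\mathbf F$.

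The one step I expect to demand genuine care is the perturbation inside the claim — checking that $p-t\nu$ stays in each $S_k'$ — since it amounts to reading off the local shape of the simplex $S_k'$ at $p$, which is legitimate precisely because $p$ lies in the relative interior of a single facet of $S_k'$, so that no other defining inequality of $S_k'$ is active there. Everything else should be routine bookkeeping with relative interiors and affine hulls of faces, with the identities $F_i\cap\operatorname{aff} F_{i-1}=F_{i-1}$ and $\relinterior[v_0,\dots,v_i]\subseteq\relinterior F_i$ (from \eqref{eqn:simplex-flag} and the discussion after Proposition \ref{prop:flag simplex}) doing most of the work.
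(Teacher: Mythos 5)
Your proof is correct, but it takes a genuinely different route from the paper's in both parts. For i), the paper runs a single induction along the flag, choosing $v_i\in\relinterior[v_0,\dotsc,v_{i-1},v_i^1]\cap\relinterior[v_0,\dotsc,v_{i-1},v_i^2]$, so the two auxiliary simplices share the already-constructed vertices $v_0,\dotsc,v_{i-1}$ and nonemptiness of the intersection comes from their common facet sitting in $F_i\setminus\relinterior F_i$ with both apices in $\relinterior F_i$. You instead isolate a stronger standalone lemma --- two flag simplices of a polytope with the same flag have intersecting relative interiors --- proved by induction on dimension via an inward-normal perturbation of a common relative-interior point of the two facet simplices, and then pick the $w_i$ independently, face by face. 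The perturbation step you flag as delicate is sound: $p$ lies in the relative interior of exactly one facet of each $S_k'$, that facet spans $\operatorname{aff}G_{m-1}$, and the apex lies strictly on the inner side, so each $S_k'$ is locally the halfspace. For ii), the paper inducts on dimension, taking a flag simplex $T'$ of the facet $F_{n-1}$ from the induction hypothesis and pushing the apex $v_n$ outward to $w_n$; your construction is a direct one-pass dilation $t_i=v_i+s_i(v_i-c_{i-1})$ away from the centroid of the previously chosen points, and the explicit convex-combination identity exhibiting $v_i\in\relinterior[t_0,\dotsc,t_i]$ makes the verification that $S$ is a flag simplex of $T$ immediate. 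Both routes yield the same conclusion; yours trades the paper's dimension induction in ii) for an explicit formula, at the cost of the extra intersection lemma in i).
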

\begin{proof}
	\emph{i)} Set $F_n=P$. For $j\in\{1,2\}$ let $S_j=[v_0^j,\dotsc, v_n^j]$ be a flag simplex such that $v_i^j\in \relinterior F_i$ for $i=0,\dotsc,n$.
	We show by induction that for $i=0,\dotsc,n$ we may choose
	\begin{equation*}
		v_i \in \relinterior\, [v_0,\dotsc,v_{i-1},v_i^1] \cap \relinterior\, [v_0,\dotsc,v_{i-1},v_i^2]\subset \relinterior F_i.
	\end{equation*}
	The statement is trivial for $i=0$.
	Assume that the statement holds true for $i-1$. Then
	\begin{equation*}
		\relinterior\, [v_0,\dotsc,v_{i-1},v_i^1]\cap \relinterior[v_0,\dotsc,v_{i-1},v_i^2] \neq \emptyset,
	\end{equation*}
	because $[v_0,\dotsc,v_{i-1}]$ is a $(i-1)$-simplex in $F_{i-1} \subset F_i\setminus \relinterior F_i$ and $v_i^1,v_i^2\in\relinterior F_i$.
	We define $S:=[v_0,\dotsc,v_n]$. Then $S$ is a flag simplex associated with $\mathbf{F}$ and $S\subset S_1\cap S_2$.
	
	\smallskip
	\emph {ii)} 
	Let $S=[v_0,\dotsc,v_n]$ be associated with the flag $(F_0,\dotsc,F_{n-1})\in\flag(P)$.
	We prove the statement by induction on $n$: The case $n=1$ is obvious.
	Assume that the statement holds true in dimension $n-1$. 
	Then $S':=[v_0,\dotsc,v_{n-1}] = S\cap F_{n-1}$ is a flag simplex of $F_{n-1}$ associated with the flag $(F_0,\dotsc,F_{n-1})$.
	Hence, by the induction hypothesis there exists a flag simplex $T'=[w_0,\dotsc,w_{n-1}]$ such that $S'\subset T'\subset F_{n-1}$. 
	Since $S'$ is a flag simplex of $T'$ we have $v_i\in\relinterior\, [w_0,\dotsc,w_i]\in \face_{i} T'$, for $i=0,\dotsc,n-1$.
	
	Since $v_n\in\interior P$ there is $\varepsilon>0$ such that the closed ball $B(v_n,\varepsilon)$ is contained in $\interior P$. Fix $x'\in \interior S'$ and set $w_n := x' + (1+\varepsilon) (v_n-x')$. We set
	\begin{equation*}
		T:=\conv\left(\{w_n\}\cup T'\right) = [w_0,\dotsc,w_n].
	\end{equation*}
	Then $w_n\in B(v_n,\varepsilon)\subset\interior P$ and since also $w_i\in\relinterior F_i$ for $i=0,\dotsc,n-1$, we see that $T$ is flag simplex of $P$ associated with $\mathbf{F}$ and $S = \conv(S'\cup\{v_n\}) \subset \conv(T'\cup\{v_n\})\subset T$. 
	
	To finish the proof, we only need to verify that $S$ is a flag simplex of $T$. 
	Set $T_i=[w_0,\dotsc,w_i]$ for $i=0,\dotsc,n-1$. Then $(T_0,\dotsc,T_{n-1})\in\flag(T)$ and $v_i\in\relinterior\, T_i$ for $i=0,\dotsc,n-1$ by the induction hypothesis for $T'$. Thus, we only need to show that $v_n\in \interior T$.
	Since $x'\in\relinterior S'$ there is $\delta>0$ such that $B':=B(x',\delta)\cap F_{n-1}\subset S'$ and therefore
	\begin{equation*}
		v_n\in \interior \conv\left(\{w_n\}\cup B'\right).
	\end{equation*}
	Since also $\conv\left(\{w_n\}\cup B'\right)\subset T$, we conclude $v_n\in\interior T$.
\end{proof}

\medskip
If the origin is in the interior of $P\in\mathcal{P}(\R^n)$, then the \emph{polar body} $P^\circ$ is defined by
\begin{equation*}
	P^\circ = \{y\in \R^n :\text{$x\cdot y \leq 1$ for all $x\in P$}\}.
\end{equation*}
The polar body of an $n$-polytope is again an $n$-polytope, that is $P^\circ\in\mathcal{P}(\R^n)$.
If $P\subset Q$ and $0\in\interior P$, then $P^\circ \supset Q^\circ$, i.e., the polar map reverses inclusion.
For $F\in\face(P)$ the \emph{conjugate face} $\widehat{F}$ is defined by
\begin{equation*}
	\widehat{F} = \{x\in P^\circ:\text{$x\cdot y = 1$ for all $y\in F$}\}\in\face(P^\circ).
\end{equation*}
The map \,$\widehat{ }:\face(P)\to \face(P^\circ)$ is an antimorphism, i.e., a bijection that reverses the inclusion relation 
(cf.\ \cite[p.~120]{Schneider:2014}). In particular, if $F$ is a $k$-face of $P$, then $\widehat{F}$ is a $(n-1-k)$-face of $P^\circ$ and we have
$\widehat{\phantom{\rule{0.7pt}{1.85ex}}\hspace*{-0.5ex}\smash{\widehat{F}}}=F$ (cf.\ \cite[Thm.\ 2.1]{Ewald:1996}). Consequently for the face number we have $f_{k}(P)=f_{n-1-k}(P^\circ)$ (cf.\ \cite[Thm.\ 2.5]{Ewald:1996}).
For a flag $\mathbf{F}=(F_0,\dotsc,F_{n-1})\in\flag(P)$ we define the \emph{conjugate flag} by
\begin{equation*}
	\widehat{\mathbf{F}} = (\widehat{F}_{n-1}, \dotsc, \widehat{F}_0)\in\flag(P^\circ).
\end{equation*}

Let $S$ be a flag simplex of $P$ associated with $(F_0, \dotsc, F_{n-1})\in\flag(P)$.
We label the vertices of $S$ by $v_0,\dotsc,v_n$ such that $v_i\in \relinterior F_i$. Then $S_i:=[v_0,\dotsc,v_i]\in\face_i(S)$ and, by Proposition \ref{prop:flag simplex},
\begin{equation}\label{eqn:flag simplex_rel}
	S_i=[v_0,\dotsc,v_i]\subset \biguplus_{j=0}^i\relinterior F_j \subset F_i,\quad \text{for $i=0,\dotsc,n$.}
\end{equation}
If $0\in\interior S$, then $S^\circ$ is an $n$-simplex and $S^\circ \supset P^\circ$. 
The $n$-simplex $S^\circ$ is determined by the closed half-spaces $H_i^+:=\{y\in S^\circ: y\cdot v_i\leq 1\}$, i.e., $S^\circ=\bigcap_{i=0}^n H_i^+$.
Furthermore, \eqref{eqn:flag simplex_rel} implies
\begin{equation}\label{eqn:flag simplex_polar}
	\widehat{F}_i \subset \widehat{S}_i = H_0\cap \cdots \cap H_i, \quad \text{for all $i=0,\dotsc,n-1$}.
\end{equation}
 
Now if $T$ is a flag simplex of $P^\circ$ associated with $\widehat{\mathbf{F}}$, then $T\subset P^\circ \subset S^\circ$.
Furthermore, we may label the vertices $w_0,\dotsc,w_n$ of $T$ such that 
\begin{equation*}
	w_i\in\relinterior \widehat{F}_{n-1-i} \overset{\eqref{eqn:flag simplex_polar}}{\subset} \relinterior \widehat{S}_{n-1-i}.
\end{equation*}
We see that $T$ is a flag simplex of $S^\circ$ associated with the flag $(\widehat{S}_{n-1},\dotsc,\widehat{S}_0)$.
If $0\in T$, then we may apply polarity once more and find $S\subset P \subset T^\circ$ and $S$ is flag simplex of $T^\circ$ associated with the flag $(\widehat{G}_{n-1},\dotsc,\widehat{G}_0)\in\flag(T^\circ)$, where $G_i=[w_0,\dotsc,w_i]\in\face_i(T)$.

\begin{lemma}\label{lem:upper_simplex}
	If $P\in\mathcal{P}(\R^n)$ and $S$ is a flag simplex of $P$, then there exists an $n$-simplex $T\supset P$ such that $S$ is also a flag simplex of $T$.
\end{lemma}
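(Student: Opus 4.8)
The plan is to deduce the lemma from the polarity correspondence developed in the discussion immediately preceding it. Translating $P$ (and $S$ along with it) we may assume $0\in\interior S$; then $0\in\interior P$, so $P^\circ\in\mathcal{P}(\R^n)$ with $0\in\interior P^\circ$, and the flag $\mathbf F=(F_0,\dotsc,F_{n-1})\in\flag(P)$ determined by $S$ has a conjugate flag $\widehat{\mathbf F}\in\flag(P^\circ)$. By that discussion it suffices to exhibit a flag simplex $T$ of $P^\circ$ associated with $\widehat{\mathbf F}$ such that $0\in\interior T$: for then $T^\circ$ is an $n$-simplex, $P=P^{\circ\circ}\subset T^\circ$, and $S$ is a flag simplex of $T^\circ$.

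To build such a $T$, write $F_0=\{v_0\}$; then $\widehat F_0$ is a facet of $P^\circ$ contained in the hyperplane $\operatorname{aff}\widehat F_0=\{x:x\cdot v_0=1\}$, which avoids $0$. First I would choose an $(n-1)$-simplex $\Delta=[w_0,\dotsc,w_{n-1}]\subset\widehat F_0$ with $w_i\in\relinterior\widehat F_{n-1-i}$ for $i=0,\dotsc,n-1$; for instance a flag simplex of the $(n-1)$-polytope $\widehat F_0$ associated with its complete flag $(\widehat F_{n-1},\dotsc,\widehat F_1)$, or simply the simplex spanned by the centroids of $\widehat F_{n-1},\dotsc,\widehat F_0$. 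Then I would fix $q\in\relinterior\Delta$ and, using $0\in\interior P^\circ$, pick $\mu>0$ small enough that $w_n:=-\mu q\in\interior P^\circ$, and set
\[
	T:=\conv\bigl(\Delta\cup\{w_n\}\bigr)=[w_0,\dotsc,w_n].
\]
Since $q\in\widehat F_0$ gives $q\cdot v_0=1$, we have $w_n\cdot v_0=-\mu\neq 1$, so the apex $w_n$ lies off $\operatorname{aff}\Delta=\operatorname{aff}\widehat F_0$ and $T$ is a genuine $n$-simplex. Its vertices $w_0,\dotsc,w_{n-1},w_n$ lie in $\relinterior\widehat F_{n-1},\dotsc,\relinterior\widehat F_0,\interior P^\circ$ respectively, so $T$ is a flag simplex of $P^\circ$ associated with $\widehat{\mathbf F}$. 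Finally, since $q\in\relinterior\Delta$ and $\tfrac{1}{1+\mu}\in(0,1)$, the point
\[
	\tfrac{1}{1+\mu}\,w_n+\tfrac{\mu}{1+\mu}\,q=0
\]
lies in the interior of the pyramid $T=\conv(\Delta\cup\{w_n\})$, so $0\in\interior T$ and, by the discussion above, $T^\circ\supset P$ is the required simplex.

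The step I expect to be the crux is this construction of $T$. The point is that in a flag simplex of $P^\circ$ associated with $\widehat{\mathbf F}$ the base $\Delta$ is pinned inside the facet $\widehat F_0$, which misses $0$, while the remaining vertex may be any point of $\interior P^\circ$; hence it can be placed on the far side of $0$ from $\Delta$, the freedom being available precisely because $0$ was arranged to lie in $\interior P^\circ$. The remaining assertions — that $S$ then meets the flag $(\widehat G_{n-1},\dotsc,\widehat G_0)$ of $T^\circ$, where $G_i=[w_0,\dotsc,w_i]$, in the correct relative interiors, and that $\interior P\subset\interior T^\circ$ — are exactly the polarity bookkeeping already carried out in the text immediately before the lemma.
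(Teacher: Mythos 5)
Your proof is correct and follows exactly the route the paper intends when it says the lemma ``follows from the preceding arguments'': translate so that $0\in\interior S$, pass to $P^\circ$, build a flag simplex $T$ of $P^\circ$ associated with $\widehat{\mathbf F}$ containing $0$ in its interior, and polarize back to get $T^\circ\supset P$. Your explicit construction of $T$ (base $\Delta$ pinned in the facet $\widehat F_0$, apex $-\mu q$ placed on the far side of the origin so that $0\in\interior T$) supplies precisely the detail the paper leaves implicit, and it checks out.
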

\begin{proof}
	This follows from the preceding arguments, but can also be proved directly by induction on $n$.
\end{proof}

In the next lemma we show, that given a family of flag simplices there is always a possibly smaller family of flag simplices that is pairwise disjoint.
\begin{lemma}\label{lem:disjoint}
	Let $P\in\mathcal{P}(\R^n)$ and $v\in \vert P$. If $(T(\mathbf{F}))_{\mathbf{F}\in\flag_v(P)}$ is a family of flag simplices such that $T(\mathbf{F})$ determines the flag $\mathbf{F}$, then there exists a family $(S(\mathbf{F}))_{\mathbf{F}\in\flag_v(P)}$ of flag simplices such that
	\begin{enumerate}
		\item[i)] $S(\mathbf{F}) \subset T(\mathbf{F})$ for all $\mathbf{F}\in\flag_v(P)$, and
		\item[ii)] $\interior S(\mathbf{F}) \cap \interior S(\mathbf{F}') = \emptyset$ for all $\mathbf{F},\mathbf{F}'\in \flag_v(P)$, $\mathbf{F}\neq \mathbf{F}'$.
	\end{enumerate}
\end{lemma}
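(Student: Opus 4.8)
The plan is to induct on $n$. For $n=1$ the set $\flag_v(P)$ is a singleton and there is nothing to prove, so assume $n\geq 2$. The starting observation is that every flag simplex associated with a flag $\mathbf{F}=(F_0,\dots,F_{n-1})\in\flag_v(P)$ has $v$ as its first vertex, since its vertex lying in $\relinterior F_0=\relinterior\{v\}$ must be $v$. Hence all the simplices under consideration share the vertex $v$, and two simplices $S,S'$ both having $v$ as a vertex satisfy $\interior S\cap\interior S'=\emptyset$ exactly when the tangent cones of $S$ and $S'$ at $v$ have disjoint interiors: a direction in both open tangent cones yields a point of $\interior S\cap\interior S'$ close to $v$, while a common interior point, pulled back along its segment to $v$, produces such a direction. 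Writing $C_v$ for the tangent cone of $P$ at $v$ and $C(\mathbf{F})$ for that of $T(\mathbf{F})$, it therefore suffices to find, for each $\mathbf{F}$, a full-dimensional simplicial subcone $C^{*}(\mathbf{F})\subset C(\mathbf{F})$ with one extreme ray in the relative interior of the face of $C_v$ corresponding to $F_i$ for each $i=1,\dots,n-1$ and one extreme ray in $\interior C_v$, such that the $C^{*}(\mathbf{F})$ are pairwise interior-disjoint; scaling the extreme rays of $C^{*}(\mathbf{F})$ by a sufficiently small $\lambda>0$ and taking the convex hull with $v$ then yields a flag simplex $S(\mathbf{F})$ of $P$ determining $\mathbf{F}$, contained in $T(\mathbf{F})$ (because $P$ and $T(\mathbf{F})$ agree with $v+C_v$ and $v+C(\mathbf{F})$ near $v$), with tangent cone $C^{*}(\mathbf{F})$ at $v$, and the $S(\mathbf{F})$ are then pairwise interior-disjoint.

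Next I would pass to the vertex figure. Cut the pointed cone $C_v$ by a hyperplane $H$ transversal to it, so $Q:=C_v\cap H$ is an $(n-1)$-polytope; its faces correspond bijectively to the faces of $C_v$, hence to the faces of $P$ containing $v$, and complete flags of $P$ through $v$ correspond bijectively to \emph{all} complete flags of $Q$. Under this correspondence $C(\mathbf{F})\cap H$ is a flag simplex $\overline{T}(\mathbf{F})$ of $Q$ determining the flag $\overline{\mathbf{F}}\in\flag(Q)$ matched with $\mathbf{F}$, and a simplicial cone $C^{*}(\mathbf{F})$ of the type above is precisely the cone over a flag simplex $\overline{S}(\mathbf{F})\subset\overline{T}(\mathbf{F})$ of $Q$ determining $\overline{\mathbf{F}}$; moreover the $C^{*}(\mathbf{F})$ are pairwise interior-disjoint iff the $\overline{S}(\mathbf{F})$ are pairwise interior-disjoint in $Q$. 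So the task reduces to the following: given a flag simplex $\overline{T}(\mathbf{F})$ of $Q$ for \emph{every} $\overline{\mathbf{F}}\in\flag(Q)$, find flag simplices $\overline{S}(\mathbf{F})\subset\overline{T}(\mathbf{F})$, one per flag, which are pairwise interior-disjoint.

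This I would settle with the inductive hypothesis together with a scaling trick. Write $\flag(Q)=\biguplus_{w\in\vert Q}\flag_w(Q)$. For each vertex $w$ of $Q$, the inductive hypothesis — the lemma for the $(n-1)$-polytope $Q$, with distinguished vertex $w$ and the subfamily $(\overline{T}(\mathbf{F}))_{\overline{\mathbf{F}}\in\flag_w(Q)}$ — provides pairwise interior-disjoint flag simplices $\overline{S}_w(\mathbf{F})\subset\overline{T}(\mathbf{F})$ (each determining $\overline{\mathbf{F}}$, being contained in $\overline{T}(\mathbf{F})$). All of these contain $w$, so by the criterion of the first paragraph their tangent cones at $w$ are pairwise interior-disjoint. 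Now scale every $\overline{S}_w(\mathbf{F})$ towards $w$ by a single small factor $\rho$; this leaves the tangent cones at $w$ unchanged, so interior-disjointness among flags through $w$ persists, while it forces every such simplex into the ball $B(w,\rho\operatorname{diam}(Q))$. Choosing $\rho$ less than half of $\operatorname{diam}(Q)^{-1}$ times the smallest distance between two distinct vertices of $Q$, these balls are pairwise disjoint, so the whole family over $\flag(Q)$ is pairwise interior-disjoint. Taking cones over these simplices gives the required $C^{*}(\mathbf{F})$, and the reconstruction from the first paragraph produces $(S(\mathbf{F}))_{\mathbf{F}\in\flag_v(P)}$, completing the induction.

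The main obstacle is precisely the step of passing to the vertex figure: it turns a problem about flags through a single vertex of $P$ into a problem about \emph{all} flags of the lower-dimensional polytope $Q$, which is not directly an instance of the lemma one dimension down. The device that makes the induction go through is the extra scaling step, which, after the vertex-wise inductive hypothesis has been applied to each $\flag_w(Q)$, separates the bundles of flag simplices belonging to distinct vertices of $Q$ into disjoint balls. The remaining ingredients are routine: that a transversal cross-section carries flag simplices of $C_v$ (and the tangent cones $C(\mathbf{F})$) to flag simplices of $Q$ with matching flags; that scaling a flag simplex towards its distinguished vertex preserves the associated flag and the tangent cone there (each new vertex being a convex combination of that vertex with the old one); and the reconstruction of $S(\mathbf{F})$ inside $T(\mathbf{F})$ from a prescribed small simplicial subcone of $C(\mathbf{F})$, which uses only that $P$ and $T(\mathbf{F})$ agree with $v+C_v$ and $v+C(\mathbf{F})$ near $v$.
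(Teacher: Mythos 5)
Your proof is correct, but it takes a genuinely different route from the paper. The paper's argument is essentially three lines: the barycenter subdivision of $P$ already supplies a family $(T'(\mathbf{F}))_{\mathbf{F}\in\flag_v(P)}$ of pairwise interior-disjoint flag simplices, one per flag through $v$, and Proposition \ref{prop:flag simplices}~i) then produces $S(\mathbf{F})\subset T(\mathbf{F})\cap T'(\mathbf{F})$; disjointness is inherited from the $T'(\mathbf{F})$. You instead build the disjoint family from scratch by induction on $n$: you localize at $v$ via the equivalence between interior-disjointness of simplices sharing the vertex $v$ and interior-disjointness of their tangent cones there, pass to the vertex figure $Q=C_v\cap H$ (under which $\flag_v(P)$ corresponds to \emph{all} of $\flag(Q)$), apply the inductive hypothesis vertex by vertex in $Q$, and then use the scaling-towards-each-vertex trick to separate the bundles belonging to distinct vertices of $Q$ into disjoint balls — this last device is exactly what upgrades the per-vertex statement to an all-flags statement one dimension down and makes the induction close. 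What each approach buys: the paper's proof is short because it leans on two pieces of machinery already in place (the fact that the barycentric subdivision is a simplicial complex of flag simplices in bijection with $\flag(P)$, and the common-refinement Proposition \ref{prop:flag simplices}~i)); your proof uses neither and is self-contained modulo standard facts about tangent cones and vertex figures (face/flag correspondence, local conicity of a polytope at a vertex, behavior of relative interiors under transversal cross-sections), in effect reconstructing a local analogue of the barycentric subdivision at $v$ — at the cost of considerably more length and bookkeeping.
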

\begin{proof}
	Let $(T'(\mathbf{F}))_{\mathbf{F}\in\flag_v(P)}$ be the sub-family of flag simplices containing $v$ obtained from the barycenter subdivision of $P$.
	Then $\interior T'(\mathbf{F}) \cap \interior T'(\mathbf{F}')=\emptyset$ for all $\mathbf{F},\mathbf{F}'\in \flag_v(P)$, $\mathbf{F}\neq \mathbf{F}'$.
	By Proposition \ref{prop:flag simplices}, for each $\mathbf{F}\in\flag_v(P)$ we may choose a flag simplex $S(\mathbf{F})$ such that $S(\mathbf{F}) \subset T(\mathbf{F})\cap T'(\mathbf{F})$. Then the sequence $(S(\mathbf{F}))_{\mathbf{F}\in\flag_v(P)}$ satisfies i) and ii).
\end{proof}

A \emph{wedge} $W$ is the intersection of two closed half-spaces,  $W=H_0^+\cap H_1^+$. 
The next lemma is  crucial  to conclude that the difference volume of $P$ and $P_\delta$ is concentrated in 
 the flag simplices.
This is proved in the next sections. 

We show that the symmetric difference of two flag simplices $S_1$ and $S_2$ that are associated with the same flag of an $n$-simplex $T$ can be covered by special wedges. It follows from \cite[Lem.\ 1.4]{Schuett:1991} that these wedges can be disregarded and  concentration of volume  in arbitrarily small flag simplices of $T$ will follow.  See Lemma \ref{lem:eucl_first}.
\begin{lemma}\label{lem:flag_simplex_switch}
	Let $T$ be an $n$-dimensional simplex with vertices $z_0,\dotsc,z_n$. Furthermore, let $S_1$ and $S_2$ be two flag simplices of $T$ associated with the flag $\mathbf{F}=(F_0,\dotsc,F_{n-1})$, where
	$F_i=[z_0,\dotsc,z_i]$ for all $i=0,\dotsc, n-1$.
	Then there are wedges $(W_i)_{i=0}^{n-1}$ such that
	\begin{equation*}
		S_1\triangle S_2 \subset \bigcup_{i=0}^{n-1} W_i
	\end{equation*}
	and the half-spaces $H_{i0}^+,H_{i1}^+$ that determine the wedge $W_i=H_{i0}^+\cap H_{i1}^+$, satisfy
	\begin{enumerate}
		\item[i)] $z_i \in (\interior H_{i0}^+)\setminus H_{01}^+$ and $z_{i+1}\in (\interior H_{i1}^+)\setminus H_{i0}^+$,
		\item[ii)] $z_j \in H_{i0}\cap H_{i1}$ for all $j \in \{0,\dotsc,n\}\setminus \{i, i+1\}$,
	\end{enumerate}
	for $i=0,\dotsc,n-1$.
\end{lemma}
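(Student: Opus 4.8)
The plan is to realise each wedge, in barycentric coordinates with respect to $z_0,\dots,z_n$, as a ``slab'', and then choose the slab's width from $S_1,S_2$. Write a point of $\R^n$ in barycentric coordinates $\lambda=(\lambda_0,\dots,\lambda_n)$ with $\sum_j\lambda_j=1$, so that $T=\{\lambda:\lambda_j\ge0\ \forall j\}$ and $z_j$ has $\lambda(z_j)=e_j$ (the $j$-th unit vector), and set $F_n:=T$. For constants $0<c_i\le C_i$ put $H_{i0}^+=\{C_i\lambda_i-\lambda_{i+1}\ge0\}$, $H_{i1}^+=\{\lambda_{i+1}-c_i\lambda_i\ge0\}$ and $W_i=H_{i0}^+\cap H_{i1}^+=\{c_i\lambda_i\le\lambda_{i+1}\le C_i\lambda_i\}$. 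Since $\lambda_i(z_j)=\lambda_{i+1}(z_j)=0$ whenever $j\notin\{i,i+1\}$, every such $z_j$ lies on both bounding hyperplanes; and $z_i$ (where $\lambda_i=1,\lambda_{i+1}=0$) lies strictly inside $H_{i0}^+$ and strictly outside $H_{i1}^+$, while $z_{i+1}$ does the opposite. So any such slab is automatically a wedge satisfying i) and ii), and only the choice of $c_i,C_i$ remains.

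The structural input I would isolate is this: writing $S_\ell=[v_0^{\ell},\dots,v_n^{\ell}]$ for $\ell=1,2$, with $v_k^{\ell}\in\relinterior F_k=\relinterior[z_0,\dots,z_k]$ for $k\le n-1$ and $v_n^{\ell}\in\interior T$, the barycentric coordinates $\mu^{\ell}_{kj}:=\lambda_j(v_k^{\ell})$ of the vertices form a lower triangular matrix with positive diagonal, i.e.\ $\mu^{\ell}_{kj}>0$ for $0\le j\le k$ and $\mu^{\ell}_{kj}=0$ for $j>k$. Hence the barycentric coordinates $\rho_0^{\ell}(x),\dots,\rho_n^{\ell}(x)$ of a point $x$ with respect to $S_\ell$ are affine in $\lambda(x)$ and obey the triangular identities $\lambda_k(x)=\sum_{l=k}^{n}\rho_l^{\ell}(x)\,\mu^{\ell}_{lk}$ for all $k$, in which every coefficient $\mu^{\ell}_{lk}$ with $l\ge k$ is strictly positive; moreover $\rho_n^{\ell}(x)=\lambda_n(x)/\mu^{\ell}_{nn}\ge0$ for every $x\in T$.

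Now for the core argument, which needs no induction. Since $S_1\triangle S_2=(S_1\setminus S_2)\cup(S_2\setminus S_1)$, fix $x\in S_1\setminus S_2$; then $\rho_l^{1}(x)\ge0$ for all $l$ while $\rho_l^{2}(x)<0$ for some $l$, and I set $k:=\max\{l:\rho_l^{2}(x)<0\}$, so $k\le n-1$ (last remark above) and $\rho_l^{2}(x)\ge0$ for $l>k$. A short check with the triangular identities and these signs gives $\lambda_k(x)>0$ and $\lambda_{k+1}(x)>0$. Since $\rho_k^{2}(x)<0$ forces $\lambda_k(x)<\sum_{l>k}\rho_l^{2}(x)\,\mu^{2}_{lk}$, while $\lambda_{k+1}(x)=\sum_{l>k}\rho_l^{2}(x)\,\mu^{2}_{l,k+1}$, the mediant inequality yields
\[
  \frac{\lambda_{k+1}(x)}{\lambda_k(x)}>\frac{\sum_{l>k}\rho_l^{2}(x)\,\mu^{2}_{l,k+1}}{\sum_{l>k}\rho_l^{2}(x)\,\mu^{2}_{lk}}\ge\min_{l\ge k+1}\frac{\mu^{2}_{l,k+1}}{\mu^{2}_{lk}};
\]
and since $x\in S_1$ gives $\lambda_k(x)\ge\sum_{l>k}\rho_l^{1}(x)\,\mu^{1}_{lk}$, the mediant inequality again yields
\[
  \frac{\lambda_{k+1}(x)}{\lambda_k(x)}\le\frac{\sum_{l>k}\rho_l^{1}(x)\,\mu^{1}_{l,k+1}}{\sum_{l>k}\rho_l^{1}(x)\,\mu^{1}_{lk}}\le\max_{l\ge k+1}\frac{\mu^{1}_{l,k+1}}{\mu^{1}_{lk}}.
\]
Running the same computation with $S_1$ and $S_2$ interchanged covers $S_2\setminus S_1$. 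Hence, putting $c_i:=\min_{\ell,\,l\ge i+1}\mu^{\ell}_{l,i+1}/\mu^{\ell}_{li}$ and $C_i:=\max_{\ell,\,l\ge i+1}\mu^{\ell}_{l,i+1}/\mu^{\ell}_{li}$, the estimates above show $x\in W_i$ for $i=k(x)$, so $S_1\triangle S_2\subset\bigcup_{i=0}^{n-1}W_i$, as required (for those $i$ where no such $x$ occurs the slab $W_i$ may be degenerate, but it is still a valid wedge satisfying i) and ii)).

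The main obstacle — and the only place the flag-simplex hypothesis is genuinely used — is the sign bookkeeping: one must verify that $x\in S_1$ together with $\rho_k^{2}(x)<0$ really forces $\lambda_k(x)>0$ and $\lambda_{k+1}(x)>0$, so that the relevant ratio is a well-defined positive number. This is exactly where the lower triangular vertex matrix enters: if $\lambda_k(x)=0$ then $\lambda_k(x)=\sum_{l\ge k}\rho_l^{1}(x)\mu^{1}_{lk}$ with all $\rho_l^{1}(x)\ge0$ and all $\mu^{1}_{lk}>0$ forces $\rho_l^{1}(x)=0$ for $l\ge k$, whence $\lambda_j(x)=0$ for $j\ge k$ and then $\rho_k^{2}(x)=0$, contradicting the choice of $k$; a similar argument gives $\lambda_{k+1}(x)>0$. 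Everything else reduces to the two applications of the mediant inequality displayed above.
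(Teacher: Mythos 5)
Your proof is correct, and it takes a genuinely different route from the paper's. The paper argues by induction on the dimension: it constructs only the wedge $W_0$ explicitly, via the quasilinear projective function $\alpha(x)=x_1/(1-\sum_{i\ge2}x_i)$ (which in your notation is the monotone reparametrization $\lambda_1/(\lambda_0+\lambda_1)$ of the ratio $\lambda_1/\lambda_0$), with $a,b$ the extremal values of $\alpha$ over the vertices of $S_1\cup S_2$; it then intersects with the hyperplane $H_{01}$, observes that $S_1',S_2'$ are flag simplices of the $(n-1)$-simplex $T'$ for the induced flag, obtains the remaining wedges from the induction hypothesis, and cones them back over $e_0$. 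You instead write down all $n$ wedges at once as barycentric slabs $c_i\lambda_i\le\lambda_{i+1}\le C_i\lambda_i$, read the constants off the lower-triangular matrix $(\mu^\ell_{kj})$ of vertex coordinates forced by the flag condition, and locate each point of $S_1\triangle S_2$ in the slab indexed by the largest negative barycentric coordinate, using the mediant inequality twice. The two constructions produce wedges of the same geometric type (each pivots around the codimension-two flat spanned by $\{z_j:j\neq i,i+1\}$), but your argument is non-inductive, avoids the projective transformation entirely, and yields explicit values for the constants $c_i,C_i$; the paper's induction is arguably more visual and matches its figures. The one place where your write-up compresses a necessary verification is the claim that $\rho_k^2(x)<0$ together with $x\in S_1\subset T$ forces $\lambda_{k+1}(x)>0$: the "similar argument" does go through (if $\lambda_{k+1}(x)=0$ one gets $\rho_l^1(x)=0$ for $l\ge k+1$, hence $\lambda_j(x)=0$ for $j\ge k+1$, hence by downward substitution $\rho_j^2(x)=0$ for $j\ge k+1$, hence $\lambda_k(x)=\rho_k^2(x)\mu^2_{kk}<0$, contradicting $x\in T$), but note that it uses the sign of $\rho_k^2(x)$ at the end, unlike the $\lambda_k(x)>0$ case, so it is worth spelling out.
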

\begin{figure}[t]
	\centering
	\hfill
	\begin{tikzpicture}[scale=3.7]
		\path[use as bounding box] (-0.2,-0.2) rectangle (1.2,1.2);
		\node[below] at (1,0) {$z_1$};
		\node[left] at (0,1)  {$z_2$};
		\node[below left] at (0,0) {$z_0$};
		\draw (1,0) -- (0,1);

		\fill[black!20] (0,0) -- ({24/75},{8/75}) -- (0.4,0) -- (0.6,0) -- (0.6,0.2) -- ({24/75},{8/75}) -- (0.1,0.4) --cycle;
		\draw[dashed] (0,0) -- ({24/75},{8/75}) -- (0.4,0);
		\draw (0.6,0) -- (0.6,0.2) -- ({24/75},{8/75}) -- (0.1,0.4) -- (0,0);
		\node[above] at ({24/75+0.05},{8/75+0.1}) {$S_1\triangle S_2$};
		
		\draw[thick] (0,0) -- (1,0) -- (0,1) -- cycle;
	\end{tikzpicture}\hfill
	\begin{tikzpicture}[scale=3.7]
		\path[use as bounding box] (-0.2,-0.2) rectangle (1.2,1.2);
		\node[below] at (1,0) {$z_1$};
		\node[left] at (0,1)  {$z_2$};
		\node[below left] at (0,0) {$z_0$};
		\draw (1,0) -- (0,1);
		
		\fill[black!20] (0,0) -- ({24/75},{8/75}) -- (0.4,0) -- (0.6,0) -- (0.6,0.2) -- ({24/75},{8/75}) -- (0.1,0.4) --cycle;
		\draw[dashed] (0,0) -- ({24/75},{8/75}) -- (0.4,0);
		\draw (0.6,0) -- (0.6,0.2) -- ({24/75},{8/75}) -- (0.1,0.4) -- (0,0);
		
		\draw[dashed] (0,1) -- (0.6,0);
		\draw[dashed] (0,1) -- (0.75,0);
		\draw[dashed] (0,1) -- (0.4,0);
		\draw[dashed] (0,1) -- ({1/6},0);
		\draw[thick] (0,0) -- (1,0) -- (0,1) -- cycle;
	\end{tikzpicture}\hfill $ $\\
	\hfill
	\begin{tikzpicture}[scale=3.7]
		\path[use as bounding box] (-0.2,-0.2) rectangle (1.2,1.2);
		\node[below] at (1,0) {$z_1$};
		\node[left] at (0,1)  {$z_2$};
		\node[below left] at (0,0) {$z_0$};
		\draw (1,0) -- (0,1);
		
		\draw[dashed] (0,0) -- ({24/75},{8/75}) -- (0.4,0);
		\draw (0.6,0) -- (0.6,0.2) -- ({24/75},{8/75}) -- (0.1,0.4) -- (0,0);
		
		\fill[blue,opacity=0.5] (0,1) -- (0.75,0) -- ({1/6},0)--cycle;
		\draw[blue,thick] (0,1) -- (0.75,0) node[below] {$H_{00}$} -- ({1/6},0) node[below] {$H_{01}$} --cycle;
		
		\node[blue,below] at (0.5,0) {$W_0$};
		\draw[thick] (0,0) -- (1,0) -- (0,1) -- cycle;
	\end{tikzpicture}\hfill
	\begin{tikzpicture}[scale=3.7]
		\path[use as bounding box] (-0.2,-0.2) rectangle (1.2,1.2);
		\node[below] at (1,0) {$z_1$};
		\node[left] at (0,1)  {$z_2$};
		\node[below left] at (0,0) {$z_0$};
		\draw (1,0) -- (0,1);
		
		\draw[dashed] (0,0) -- ({24/75},{8/75}) -- (0.4,0);
		\draw (0.6,0) -- (0.6,0.2) -- ({24/75},{8/75}) -- (0.1,0.4) -- (0,0);
		
		\fill[blue,opacity=0.5] (0,1) -- (0.75,0) -- ({1/6},0)--cycle;
		\draw[blue, thick] (0,1) -- (0.75,0) node[below] {$H_{00}$} -- ({1/6},0) node[below] {$H_{01}$} --cycle;
		
		\fill[red,opacity=0.5] (0,0) -- (0.75,0.25) -- (0.2,0.8)--cycle;
		\draw[red,thick] (0,0) -- (0.75,0.25) node[above right]{$H_{11}$} -- (0.2,0.8) node[above right]{$H_{10}$}--cycle;
		
		\node[blue,below] at (0.5,0) {$W_0$};
		\node[red, above right] at (0.5,0.5) {$W_1$};
		\draw[thick] (0,0) -- (1,0) -- (0,1) -- cycle;
	\end{tikzpicture}\hfill $ $
	\caption{Sketches for the proof of Lemma \ref{lem:flag_simplex_switch}. In the first sketch in the upper left we see the symmetric difference of two flag simplices $S_1$ and $S_2$ that determine the same flag. In the second sketch on the upper right we see the projection we use to reduce the dimension. Finally, in the lower two pictures we see the construction of the closed wedges $W_0$ and $W_1$ that cover the symmetric difference $S_1\triangle S_2$.}
\end{figure}
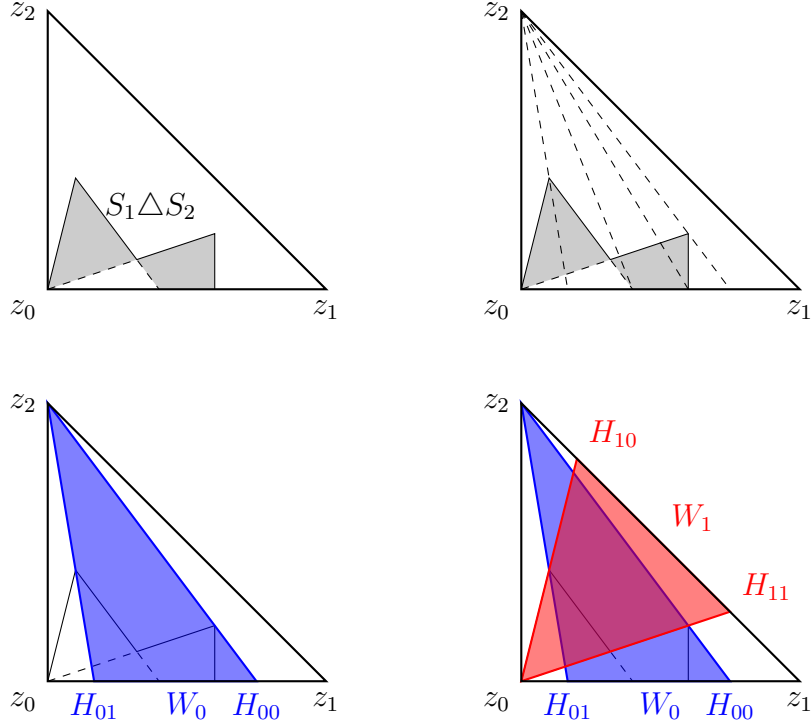
\begin{proof} The proof is done by 
	induction on the dimension $n$. The case $n=1$: $T$, $S_1$, and $S_2$ are closed intervals and we may assume that
	$T=[z_0,z_1]$, $S_1=[z_0,v_1]$, $S_2=[z_0,w_1]$ for $z_0<v_1\leq w_1<z_1$. Set 
	\begin{align*}
		H_{00}^+ &= \left(-\infty, v_1\right], &
		H_{01}^+ &= \left[w_1,\infty\right),
	\end{align*}
	and $W_0 = H_{00}^+\cap H_{01}^+=[v_1, w_1]$. Then
	\begin{align*}
		S_1\triangle S_2 = (v_1,w_1] \subset W_0,
	\end{align*}
	and $H_{00}^+,H_{01}^+$ satisfy the condition i). Condition ii) holds true trivially.
	
	Now let $n\geq 2$ and assume that the statement holds true in dimension $n-1$.
	It will be sufficient to consider the standard simplex $T_n$ with vertices $e_0,\dotsc,e_n$, where we set $e_0:=0$, since any $n$-simplex $T$ can be mapped to $T_n$ by an affine transformation. We may also assume that the flag determined by $S_1$ and $S_2$ is $\mathbf{F}=(F_0,\dotsc,F_{n-1})$ where $F_i = [e_0,e_1,\dotsc, e_i]$.
	
	Let $E$ be the affine $(n-2)$-dimensional plane that is spanned by $e_2,\dotsc,e_n$.
	We consider the projective transformation $\alpha$ that maps a point $(x_1,\dotsc,x_n)\in T_n\setminus E$, i.e., $\sum_{i=2}^n x_i<1$, to the line $L$ spanned by $e_1$ through $E$. By identifying the line $L$ with $\R$, we may write $\alpha$ as
	\begin{equation*}
		\alpha\big((x_1,\dotsc,x_n)\big) = \frac{x_1}{1-\sum_{i=2}^n x_i}.
	\end{equation*}
	This means that $x$ is a convex combination of 
	\begin{equation*}
		(\alpha(x),0,\dotsc,0)\quad  \text{ and }\quad \frac{(0,x_2,\dotsc,x_n)}{\sum_{i=2}^n x_i}.
	\end{equation*}
	Since for $x\in T_n\setminus E$ we have $\sum_{i=1}^n x_i \leq 1$, we conclude that $\alpha$ maps $T_n\setminus E$ to the interval $[0,1]$. 
	The function $\alpha$ is quasilinear, that is, for any $v,w\in T_n\setminus E$ we have
	\begin{equation*}
		\min\{\alpha(v),\alpha(w)\} \leq \alpha((1-\lambda)v+\lambda w) \leq \max\{ \alpha(v),\alpha(w)\},
	\end{equation*}
	for all $\lambda\in (0,1)$. One may easily verify this by checking that $\lambda \mapsto \alpha((1-\lambda)v+\lambda w)$ is monotone.
	
	Denote by $(v_i)_{i=0}^n$ the vertices of $S_1$. $S_1$ is associated with $\mathbf{F}$ and therefore we may assume $e_0=v_0$ and $v_i \in\relinterior F_i$ for $i=1,\dotsc,n$. Since $\relinterior F_i=\relinterior [e_0,\dotsc,e_i]$, we conclude, for $i=1,\dotsc,n$, that $(v_i)_1>0$ and $\sum_{j=1}^n (v_i)_j<1$, or equivalently $\alpha(v_i) \in (0,1)$. The same argument holds true for the vertices $(w_i)_{i=0}^n$ of $S_2$, i.e., $\alpha(w_i)\in(0,1)$ for $i=1,\dotsc,n$ and $w_0=e_0$.
	We set 
	\begin{align*}
		a&:=\min_{1\leq i\leq n} \min(\alpha(v_i),\alpha(w_i)),&
		b&:=\max_{1\leq i\leq n} \max(\alpha(v_i),\alpha(w_i)),
	\end{align*}
	and note that $0<a\leq b < 1$.
	Since $\alpha$ is quasilinear, the maximum over $S_1$, respectively $S_2$, is assumed at the vertices of $S_1$, respectively $S_2$. This yields
	\begin{equation}\label{eqn:max_flag}
		\max_{y\in S_1\cup S_2} \alpha(y) \leq b.
	\end{equation}
	Analogously, the minimum over the $(n-1)$-simplex $[v_1,\dotsc,v_n]$, respectively $[w_1,\dotsc,w_n]$, is assumed at some vertex and therefore
	\begin{equation*}
		\min_{y\in [v_1,\dotsc,v_n]\cup[w_1,\dotsc,w_n]} \alpha(y)\geq a.
	\end{equation*}

	We define $H_{00}^+$, respectively $H_{01}^+$, as the closed half-space that contains $e_0$, respectively $e_1$, in the interior and whose boundary hyperplane is spanned by $E$ and $be_1$, respectively $ae_1$. Hence
	\begin{align}\label{eqn:h00}
	\begin{split}
		H_{00}^+ &= \left\{(x_1,\dotsc,x_n)\in \R^n: \sum_{i=2}^n x_i \leq 1-\frac{x_1}{b}\right\},\\
		H_{01}^+ &= \left\{(x_1,\dotsc,x_n)\in \R^n: \sum_{i=2}^n x_i \geq 1-\frac{x_1}{a}\right\}.
	\end{split}
	\end{align}
	Then $H_{00}^+$ and $H_{01}^+$ satisfy conditions i) and ii) for $i=0$.
	We further set 
	\begin{equation*}
		W_0:=H_{00}^+\cap H_{01}^+ = \left\{(x_1,\dotsc,x_n)\in\R^n : 1-\frac{x_1}{a}\leq \sum_{i=2}^n x_i \leq 1-\frac{x_1}{b}\right\}.
	\end{equation*}
	Clearly, $W_0\setminus E$ is mapped by $\alpha$ to $[a,b]\subset \R$. 
	For $x\in S_1\cap S_2$ we conclude by \eqref{eqn:max_flag} that $\alpha(x) \leq b$, or equivalently $x\in H_{00}^+$. Thus
	\begin{equation}\label{eqn:symdiff1}
		(S_1\triangle S_2)\cap H_{01}^+ \subset H_{00}^+\cap H_{01}^+ = W_0.
	\end{equation}
	
	Next, we set
	\begin{align*}
		T'&:= H_{01}\cap T_n,&
		S_1'&:=H_{01}\cap S_1,&
		S_2'&:=H_{01}\cap S_2.
	\end{align*}
	Then $T'$ is a $(n-1)$-dimensional simplex spanned by the vertices $ae_1, e_2, \dotsc, e_n$.
	Since $v_i,w_i \in H_{01}^+$ for $i=1,\dotsc,n$ and $e_0=v_0=w_0\not\in H_{01}^+$,  it follows that 
	$S_1'$ as well as $S_2'$ are flag simplices of $T'$ associated with the flag $\mathbf{G}=(G_0,\dotsc, G_{n-2})\in\flag(T)'$, where
	$G_j = F_{j+1}\cap H_{01} = [ae_1,e_2,\dotsc,e_{j+1}]$ for $j=0,\dotsc,n-2$.
	Furthermore
	\begin{equation}\label{eqn:symdiff2}
		(S_1\triangle S_2)\setminus H_{01}^+ \subset \conv\left(\{e_0\}\cup (S_1'\triangle S_2')\right),
	\end{equation}
	since $S_1\setminus H_{01}^+ \subset \conv(\{e_0\}\cup S_1')$ and $S_2\setminus H_{01}^+ \subset \conv(\{e_0\}\cup S_2')$.
	By the induction hypothesis we obtain wedges $W_j'$, $j=0,\dotsc, n-2$, such that
	\begin{equation}\label{eqn:symdiff3}
		S_1'\triangle S_2' \subset \bigcup_{j=0}^{n-2} W_j',
	\end{equation}
	and $W_j'=\bar{H}_{j0}^+\cap \bar{H}_{j1}^+$. 
	For $i=1,\dotsc,n-1$, we define $H_{i0}^+$, respectively $H_{i1}^+$, as the closed half-space that contains $e_i$, respectively $e_{i+1}$, and whose boundary hyperplane is spanned by $\bar{H}_{(i-1)0}$, respectively $\bar{H}_{(i-1)1}$, and the origin $e_0$.
	We easily verify that $H_{i0}^+$ and $H_{i1}^+$ satisfy condition i) and ii) and set $W_i:= H_{i0}^+\cap H_{i1}^+$, for $i=1,\dotsc,n-1$.
	Then $\conv(\{e_0\}\cup W_{i-1}')\subset W_i$ and by \eqref{eqn:symdiff1}, \eqref{eqn:symdiff2} and \eqref{eqn:symdiff3} we conclude
	\begin{equation*}
		S_1\triangle S_2
		\subset W_0 \cup \conv\left(\{e_0\}\cup (S_1'\triangle S_2')\right)
		\subset W_0 \cup \bigcup_{i=1}^{n-1} W_i.
	\end{equation*}
	Thus the induction step is complete and the lemma follows.
\end{proof}

\subsection{Lower and upper bounds on \texorpdfstring{$\left|\flag(P)\right|$}{|flag(P)|}}
For $P\in\mathcal{P}(\R^n)$ we have that
\begin{equation*}
	\left|\flag(P)\right| 
	= \sum_{F\in\face_{n-1}(P)} \left|\flag(F)\right| 
	\geq (n+1)\, \min \{\left|\flag(F)\right| : F\in\face_{n-1}(P)\}.
\end{equation*}
Thus, by induction, we find
\begin{equation}\label{eqn:flag_lower_bound}
	\left|\flag(P)\right| \geq \left|\flag(T)\right| = (n+1)! , 
\end{equation}
where $T$ is an $n$-dimensional simplex. For centrally symmetric convex polytopes $P$ it is an open conjecture by Kalai \cite{Kalai:1989}, that
\begin{equation}\label{eqn:kalai}
	 \left|\flag(P)\right|\overset{?}{\geq} \left|\flag(C)\right| = n!\, 2^n , 
\end{equation}
where $C$ is an $n$-dimensional cube.
B{\'a}r{\'a}ny and Lov{\'a}sz \cite[Cor.\ 3]{BarLov:1982} showed that $f_{n-1}(P)\geq 2^n$ for any simplicial centrally symmetric $n$-polytope $P$ (Stanley \cite{Stanley:1987} later generalized their results), which proves Kalai's conjecture \eqref{eqn:kalai} for simplicial, or simple, centrally symmetric polytopes.
Much less is known for general centrally symmetric $n$-polytopes.
The $2$-dimensional case is trivial. For dimension $n=3$ the famous $3^n$ conjecture by Kalai \cite{Kalai:1989} (see \cite[Sec.\ 2]{SWZ:2009} for a quick proof) gives $f_0(P)+f_1(P)+f_2(P)\geq 3^3-1 = 26$ for every centrally symmetric $3$-polytope $P$. This together with Euler's polyhedral formula, $f_0(P)-f_1(P)+f_2(P)=2$, leads to 
\begin{equation*}
	\left|\flag(P)\right| = 4 f_1(P) \geq 3!\,2^3.
\end{equation*}
Apparently, for dimension $n\geq 4$ it is still an open question if the $n$-cube minimizes $\left|\flag(P)\right|$ for all centrally symmetric $n$-polytopes.
It was observed that $\left|\flag(H)\right|=n!\, 2^n$ holds true for any $n$-dimensional Hanner polytope \cite{Hanner:1956} (cf.\ \cite{Kim:2014}).
Thus, incidentally the conjectured minimizers in Mahler's conjecture are exactly the same as in Kalai's conjecture.

\begin{remark}[Mahler's conjecture]
	It is well-known that the classical \emph{affine isoperimetric inequality} (cf.\ \cite{Lutwak:1991}), which provides an upper bound in terms of Euclidean 	balls for the affine surface area, is connected to the \emph{Blaschke--Santal{\'o} inequality} (cf.\ \cite{Lutwak:1985}), which gives an upper bound in terms of Euclidean balls for the volume product of a convex body and its polar body (cf.\ \cite{BBF:2014, Boroczky:2010} and \cite[Sec.\ 10.5]{Schneider:2014}). Many extension of both inequalities were since discovered.  See for instance \cite{CagWer:2014, DPP:2016, HaberlSchuster:2009, WernerYe:2008, Stancu:2012}.
	A lower bound for the volume product is a well-known open conjecture by Mahler \cite{Mahler:1939, Mahler:1939a}. 
	For general convex bodies it states that the lower bound of the volume product is achieved by an $n$-dimensional simplex and for centrally symmetric convex bodies $K$, i.e., $K=-K$, it is conjectured that the minimum is achieved by an $n$-dimensional cube or more generally by the Hanner polytopes. The conjecture holds true in dimension two, as proved by Mahler, and the equality cases were obtained in \cite{Meyer:1991,Reisner:1986}. Very recently a proof in dimension three for the symmetric case has appeared \cite{IriShi:2017}. In all other cases the conjecture is still open and the best lower bound is given by Kuperberg \cite{Kuperberg:2008} who improved a previous bound by Bourgain and Milman \cite{BouMil:1987}. Mahler's conjecture was also shown to hold true for many special classes of convex bodies. We  refer to \cite{AFZ:2017, BMMR:2013, GPV:2014, Ivaki:2015, Kim:2014} and the references therein for  recent expositions.
\end{remark}

\begin{remark}[Subdivisions and the volume product]
	As mentioned in the beginning of subsection 2.2, a subdivision of $P$ into a simplicial complex of flag simplices is determined by a map $\mathcal{S}^P:\face(P)\to P$ such that $\mathcal{S}^P(F)\in\relinterior F$ for all $F\in\face(P)$, see \eqref{eqn:simplex_sub}.
	We may therefore express the volume of $P$ by the volume of the simplices in the subdivision, that is,
	\begin{equation*}
		\vol_n(P) = \sum_{\mathbf{F}\in\flag(P)} \vol_n(S(\mathcal{S}^P,\mathbf{F})).
	\end{equation*}
	Now, since $\left|\flag(P)\right| = \left|\flag(P^\circ)\right|$ and by the Cauchy--Schwarz inequality, we find
	\begin{align*}
		\vol_n(P)\vol_n(P^\circ) 
 		&= \left(\sum_{\mathbf{F}\in\flag(P)} \vol_n(S(\mathcal{S}^P,\mathbf{F}))\right)\left(\sum_{\widehat{\mathbf{F}}\in\flag(P^\circ)} \vol_n(S(\mathcal{S}^{P^\circ},\widehat{\mathbf{F}}))\right)\\
 		&\geq \left(\sum_{\mathbf{F}\in\flag(P)} \vol_n(S(\mathcal{S}^P,\mathbf{F}))^{\frac{1}{2}}\vol_n(S(\mathcal{S}^{P^\circ},\widehat{\mathbf{F}}))^{\frac{1}{2}}\right)^2\\
 		&\geq \left|\flag(P)\right|^2 \min_{\mathbf{F}\in\flag(P)} \vol_n(S(\mathcal{S}^P,\mathbf{F})) \vol_n(S(\mathcal{S}^{P^\circ},\widehat{\mathbf{F}})),
	\end{align*}
	where $\mathcal{S}^P$ and $\mathcal{S}^{P^{\circ}}$ are subdivision schemes for $P$, respectively $P^\circ$.

	A construction like this is used in \cite{Kim:2014} on Hanner polytopes to show that they are local minimizers of the volume product. A crucial step is the observation, that the barycenter subdivision of a Hanner polytope gives simplices of equal volume \cite[Prop.\ 4]{Kim:2014}. Hence, if $H$ is a Hanner polytope, then so is the polar $H^\circ$ and
	\begin{equation*}
		\vol_n(H)\vol_n(H^\circ) = \left|\flag H\right|^2 \vol_n(S(\mathcal{C}^H,\mathbf{F})) \vol_n(S(\mathcal{C}^{H^\circ},\widehat{\mathbf{F}})),
	\end{equation*}
	for all $\mathbf{F}\in\flag(H)$, where $\mathcal{C}^H$ and $\mathcal{C}^{H^\circ}$ are the subdivision schemes given by the centroids of the faces of $H$, respectively $H^\circ$. 
\end{remark}

\medskip
It was proved by Figiel, Lindenstrauss and Milman \cite[Thm.\ 3.4]{FLM:1977} that there is an absolute constant $c>1$ such that
\begin{equation*}
	\max\{f_0(P),f_{n-1}(P)\} \geq c^{\sqrt{n}},
\end{equation*}
for every centrally symmetric $n$-polytope. A trivial lower bound is obtained by applying \eqref{eqn:flag_lower_bound} to the flags of a facet of a centrally symmetric polytope $P$, that is 
\begin{equation*}
	\left|\flag(P)\right| = \sum_{F\in\face_{n-1}(P)} \left|\flag(F)\right| \geq n!\, f_{n-1}(P).
\end{equation*}
Since $\left|\flag(P)\right|=\left|\flag(P^\circ)\right|$ and $f_{n-1}(P)=f_0(P^\circ)$, we combine these two inequalities to conclude, that there exists a absolute constant $c>1$ such that
\begin{equation*}
	\left|\flag(P)\right| \geq n! \, c^{\sqrt{n}},
\end{equation*}
for every centrally symmetric $n$-polytope $P$. Comparing this with the conjectured lower bound by Kalai \eqref{eqn:kalai}, we notice that the exponential factor is of order $\sqrt{n}$ instead of $n$.

\medskip
An upper bound for the number of complete flags with respect to the number of vertices follows by the general upper bound theorem (cf.\ \cite[Cor.\ 6.6]{BilleraEhrenborg:2000}), that is,
\begin{equation*}
	\left|\flag(P)\right| \leq \left|\flag C_n(f_0(P))\right|,
\end{equation*}
where $C_n(k)$ is the \emph{cyclic $n$-polytope} with $k$ vertices, i.e., the convex hull of $k$ vertices chosen from the moment generating curve $(t,t^1,\dotsc,t^n)\in\R^n$. The cyclic $n$-polytope is simplicial and $\lfloor \frac{n}{2}\rfloor$-neighborly, see \cite{McMuShep:1971},
hence
\begin{align*}
	\left|\flag C_n(k)\right| &= n! f_{n-1}(C_n(k)) \\
	&=\begin{cases}
	  	(2\ell)! \frac{k}{k-\ell} \binom{k-\ell}{\ell}, &\text{if $n=2\ell$,}\\
	  	2(2\ell+1)!\binom{k-\ell-1}{\ell}, &\text{if $n=2\ell+1$,}
	  \end{cases}
\end{align*}
(cf.\ \cite[p.\ 25]{Ziegler:1995}). Upper bounds for the face numbers of centrally symmetric polytopes were obtained in \cite{BLN:2013} using the symmetric moment curve introduced in \cite{BN:2008}.

\section{Results for Uniform Weights}

In this section we restate  parts of the original proof for the uniform case, i.e., $\phi\equiv\psi\equiv 1$, contained in \cite{Schuett:1991} in terms of  flag simplices. We will build on these results in the next section for the proof of our main Theorem \ref{thm:main_theorem}.  We hope that the introduction of flag simplices will  help to make the proof more transparent.

Let $P\in\mathcal{P}(\R^n)$ and $v\in\vert P$.  Let $\delta >0$. We define
\begin{align}\label{eqn:def_a_eucl}
		A^+(P,v,\delta) 
		&= \bigcup\left\{P\cap \interior H^+ :\, 
			\parbox[c]{0.4\textwidth}{$\lambda_n(P\cap H^+)\leq \delta$, $v\in\interior H^+$, and\\ $w\not\in H^+$ for all $w\in\vert P \setminus\{v\}$}\right\},
\end{align}
where  $\lambda_n$ is the Lebesgue measure on $\mathbb{R}^n$ and $H^+$ denotes as usual a closed half-space of $\R^n$. 
Thus  $A^+(P,v,\delta)$  is the union of all open halfspaces of volume smaller or equal $\delta$ that cut off one vertex from $P$.
Furthermore, we define
\begin{align}\label{eqn:def_b_eucl}
		B^+(P,v,w,\delta) 
		&= \bigcup\left\{P\cap \interior H^+ :\, \text{$\lambda_n(P\cap H^+)\leq \delta$ and $v,w\in H^+$}\right\},
\end{align}
for $v,w\in\vert P$, $v\neq w$. 
$B^+(P,v,w,\delta)$  is the union of all open halfspaces of volume smaller or equal $\delta$ that cut off at least two vertices from $P$.
Then
\begin{align}\label{eqn:float_split}
\begin{split}
		P\setminus P_\delta 
		&= \bigcup \{P\cap H^+ : \lambda_n(P\cap H^+)< \delta\}\\
		&= \bigcup_{v\in\vert P} A^+(P,v,\delta) \cup \bigcup_{\substack{v,w\in\vert P\\i\neq j}} B^+(P,v,w,\delta).
\end{split}
\end{align}
We may think of $A^+(P,v,\delta)$ as the part of $P\setminus P_\delta$ that is determined by $v\in\vert P$. The sets $B^+(P,v,w,\delta)$ cover what is left after removing $A^+(P,v,\delta)$ for all vertices and are related to the part of $P\setminus P_\delta$ that is determined by the edge $[v,w]$.

First we recall  known facts for $n$-simplices.
\begin{lemma}[{\cite{Schuett:1991}}]\label{lem:eucl_first}
	Let $T$ be an $n$-dimensional simplex in $\R^n$ spanned by the vertices $z_0,\dotsc,z_n$.
	Then for all $\delta \in (0,\frac{\lambda_n(T)}{2})$ we have
	\begin{align}
		\label{eqn:lambda_A_lower}
		\lambda_n(A^+(T,z_i,\delta)) &\geq \frac{\delta}{n^{n-1}} \left[\ln\left(\frac{\lambda_n(T)}{\delta}\right)\right]^{n-1},\\
		\label{eqn:lambda_A_upper}
		\lambda_n(A^+(T,z_i,\delta)) &\leq \frac{\delta}{n^{n-1}} \left[\ln\left(\frac{n^n\lambda_n(T)}{\delta}\right)\right]^{n-1} + n\delta \left[\ln\left(\frac{n!\lambda_n(T)}{\delta}\right)\right]^{n-2},
	\end{align}
	and
	\begin{equation}
		\label{eqn:lambda_B_upper}
		\limsup_{\delta\to 0^+} \frac{\lambda_n(B^+(T,z_i,z_j,\delta))}{\delta\left(\ln\frac{1}{\delta}\right)^{n-1}} = 0.
	\end{equation}
	Furthermore, if $H_0^+$ is a closed half-space such that $z_i\not \in H_0^+$, then
	\begin{equation}\label{eqn:lambda_A_upper2}
		\limsup_{\delta\to 0^+} \frac{\lambda_n(A^+(T,z_i,\delta)\cap H_0^+)}{\delta\left(\ln\frac{1}{\delta}\right)^{n-1}} = 0.
	\end{equation}
	Finally, if $S$ is a flag simplex of $T$ such that $z_i\in S$, then
	\begin{equation}\label{eqn:lambda_concentrate}
		\lim_{\delta\to 0^+} \frac{\lambda_n(S \cap A^+(T,z_i,\delta))}{\delta\left(\ln\frac{1}{\delta}\right)^{n-1}} = \frac{1}{n!\,n^{n-1}}.
	\end{equation}
\end{lemma}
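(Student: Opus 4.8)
The plan is to prove the five assertions of Lemma~\ref{lem:eucl_first} by first reducing to the standard simplex $T_n$, since an affine map takes any $n$-simplex onto $T_n$ and multiplies all Lebesgue measures by a fixed Jacobian, so the quotients in \eqref{eqn:lambda_A_lower}--\eqref{eqn:lambda_concentrate} are affine invariants once one replaces $\lambda_n(T)$ by its value. For $T=T_n$ one parametrizes a cap cutting off the vertex $z_i=e_i$: after relabeling, a supporting half-space $H^+$ with $e_i\in\interior H^+$ and no other vertex in $H^+$ is, up to the affine structure, determined by where its bounding hyperplane meets the $n$ edges emanating from $e_i$; writing those intercepts as $t_1,\dots,t_n\in(0,1]$, the cap $T_n\cap H^+$ is (affinely) a simplex of volume $\tfrac{1}{n!}\prod_j t_j$ times a correction, and the set $A^+(T_n,e_i,\delta)$ is the union over all $(t_1,\dots,t_n)$ with $\prod t_j\lesssim\delta$. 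Integrating the indicator of this union reduces \eqref{eqn:lambda_A_lower} and \eqref{eqn:lambda_A_upper} to the elementary volume estimate
\begin{equation*}
	\int_{\{t\in(0,1]^n:\prod t_j\le s\}}\d t = s\sum_{k=0}^{n-1}\frac{1}{k!}\Bigl(\ln\frac1s\Bigr)^{k},
\end{equation*}
whose leading term is $\tfrac{s}{(n-1)!}(\ln\tfrac1s)^{n-1}$; the factor $n^{1-n}$ and the logarithmic corrections in \eqref{eqn:lambda_A_upper} come from bounding the cap volume between $\tfrac{1}{n!}\prod t_j$ and $\tfrac{n^{n-1}}{n!}\prod t_j$ (the cap need not be a corner simplex, but is sandwiched between two such). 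This is exactly the computation carried out in \cite{Schuett:1991}, so here one only needs to recall it.

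Next, \eqref{eqn:lambda_B_upper} and \eqref{eqn:lambda_A_upper2} are the ``negligibility'' statements. For \eqref{eqn:lambda_B_upper}: a half-space containing two vertices $z_i,z_j$ of volume $\le\delta$ must, in the affine model, have all its edge-intercepts from $z_i$ bounded by some $t$'s whose product is $\lesssim\delta$, but additionally it must reach past $z_j$, which forces one of the intercepts on an edge not through $z_i$ to be bounded away from the relevant face; geometrically this kills one logarithmic power, giving an upper bound of order $\delta(\ln\tfrac1\delta)^{n-2}$, hence the $\limsup$ of the normalized quantity is $0$. The same mechanism gives \eqref{eqn:lambda_A_upper2}: intersecting $A^+(T,z_i,\delta)$ with a half-space $H_0^+$ that does \emph{not} contain $z_i$ throws away the ``corner'' region near $z_i$ where the logarithmic mass concentrates, so again one logarithmic power is lost. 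In both cases I would quote \cite[Lem.~1.3, Lem.~1.4]{Schuett:1991} rather than redo the estimates, since they are precisely of this form.

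Finally, \eqref{eqn:lambda_concentrate} is the new, flag-simplex, statement and is the heart of the lemma. Fix the flag $\mathbf F=(F_0,\dots,F_{n-1})$ determined by $S$ with $F_0=\{z_i\}$; by relabeling the vertices of $T$ we may take $F_k=[z_i,z_{j_1},\dots,z_{j_k}]$, i.e.\ $S$ ``hugs'' a particular corner-to-facet chain. The idea is: the mass of $A^+(T,z_i,\delta)$ inside $S$ concentrates, as $\delta\to0$, along that single chain of faces, and contributes the fraction $\tfrac{1}{n+1}$ of the ``diagonal'' leading term $\tfrac{1}{n!}(\ln\tfrac1\delta)^{n-1}$ coming from the symmetric part of the integral where $t_1\approx\dots\approx t_n$ — but more carefully, one shows the limit equals $\tfrac{1}{n!\,n^{n-1}}$ by a direct computation on $T_n$, and then argues it is independent of the choice of flag simplex $S$ associated with $\mathbf F$ (all such $S$ give the same limit, by Proposition~\ref{prop:flag simplices}~i): any two share a sub-flag-simplex, and the symmetric difference of two flag simplices with the same flag is covered by the wedges of Lemma~\ref{lem:flag_simplex_switch}, each of which is negligible by \eqref{eqn:lambda_A_upper2}/\eqref{eqn:lambda_B_upper} applied on the enlarging simplex $T$ of Lemma~\ref{lem:upper_simplex}). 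So the strategy for \eqref{eqn:lambda_concentrate} is: (a) compute the limit for one convenient flag simplex, e.g.\ a corner-type sub-simplex of the barycenter subdivision of $T_n$, by localizing the integral $\int\mathbf 1[\prod t_j\lesssim\delta,\,t\in S]$ to a neighborhood of the ray $t_1=\dots=t_n$ and using the $n^{1-n}$ scaling between the cap and its corner-simplex approximant; (b) use the wedge covering of Lemma~\ref{lem:flag_simplex_switch} together with \eqref{eqn:lambda_A_upper2} to transfer the limit to an arbitrary flag simplex associated with the same flag; (c) note translation-invariance over the $n+1$ chains gives the factor accounting for why each flag simplex carries $\tfrac{1}{n!\,n^{n-1}}$ of the total $\tfrac{|\flag(T)|}{n!\,n^{n-1}}=\tfrac{(n+1)!}{n!\,n^{n-1}}$. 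The main obstacle is step~(a): getting the exact constant $\tfrac{1}{n!\,n^{n-1}}$ requires showing that in the limit the cap volume can be replaced by its corner-simplex surrogate $\tfrac{1}{n!}\prod t_j$ up to lower-order logarithmic error on the region that matters (the rest of $A^+$ already being controlled by \eqref{eqn:lambda_A_upper}), and that the flag simplex $S$, being a full-dimensional neighborhood of its defining chain, captures the full leading-order mass of that surrogate integral while the off-diagonal contributions (which see fewer than $n-1$ large logarithms) vanish after normalization — this is where the quasilinearity/monotonicity arguments in the spirit of Lemma~\ref{lem:flag_simplex_switch} do the real work.
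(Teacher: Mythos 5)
Your overall architecture matches the paper's: \eqref{eqn:lambda_A_lower}, \eqref{eqn:lambda_A_upper} and \eqref{eqn:lambda_B_upper} are indeed just quoted from \cite[Lem.~1.3]{Schuett:1991}, and \eqref{eqn:lambda_concentrate} is obtained by computing the limit for one special flag simplex and transferring it to an arbitrary one via the wedge covering of Lemma~\ref{lem:flag_simplex_switch}. But the step you yourself call ``the main obstacle'' --- getting the exact constant for the special simplex --- is a genuine gap, and the localization-to-the-diagonal heuristic you propose for it is not a proof and is not how the constant is actually obtained. The decisive idea is a symmetrization: take $S_{\id}=[0,w_1,\dotsc,w_n]$ with $w_i=\frac{1}{2i}\sum_{k=1}^{i}e_k$; its images $S_\sigma=\alpha_\sigma(S_{\id})$ under the coordinate permutations $\sigma\in\Sigma(n)$ have pairwise disjoint interiors and tile $\frac{1}{2}T_n$, while $A^+(T_n,0,\delta)$ is invariant under each $\alpha_\sigma$, so $\lambda_n\big(S_{\id}\cap A^+(T_n,0,\delta)\big)=\frac{1}{n!}\lambda_n\big(\frac{1}{2}T_n\cap A^+(T_n,0,\delta)\big)$. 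The right-hand side is then squeezed: the upper bound is \eqref{eqn:lambda_A_upper}, and the matching lower bound is the auxiliary statement that $\liminf_{\delta\to0^+}\lambda_n\big(A^+(T,z_i,\delta)\cap H_0^+\big)\big/\big(\delta(\ln\frac{1}{\delta})^{n-1}\big)\geq n^{1-n}$ whenever $z_i\in\interior H_0^+$ (note $\frac{1}{2}T_n=T_n\cap H_0^+$ for such a half-space), proved by inscribing a small simplex $T'=T\cap H_1^+$ at the vertex and applying \eqref{eqn:lambda_A_lower} to $T'$. This same auxiliary lower bound, subtracted from \eqref{eqn:lambda_A_upper}, is also how \eqref{eqn:lambda_A_upper2} is derived --- it is not verbatim in \cite{Schuett:1991}, so your plan to simply quote it there is not quite available. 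Your step (c) cannot repair the missing computation either: the relevant symmetry is the group of $n!$ permutations of the edges at the fixed vertex, not ``translation-invariance over the $n+1$ chains''.

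A second, smaller inaccuracy sits in your transfer step (b): you claim the wedges of Lemma~\ref{lem:flag_simplex_switch} are negligible ``by \eqref{eqn:lambda_A_upper2}/\eqref{eqn:lambda_B_upper}''. That reasoning works for $W_0$, whose half-space $H_{01}^+$ excludes the vertex, but the wedges $W_i$ with $i\geq1$ contain the vertex on both of their bounding hyperplanes, so neither \eqref{eqn:lambda_A_upper2} nor \eqref{eqn:lambda_B_upper} applies to them. Their negligibility is precisely \cite[Lem.~1.4]{Schuett:1991}, which has to be invoked (or reproved) as a separate ingredient.
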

\begin{proof} 
	The inequalities \eqref{eqn:lambda_A_lower} as well as \eqref{eqn:lambda_A_upper} are exactly as in \cite[Lem.~1.3(i)]{Schuett:1991}.
	In \cite[Lem.~1.3(ii)]{Schuett:1991} it is stated that
	\begin{equation*}
		\lambda_n(B^+(T,z_i,z_j,\delta)) \leq c_n \delta \left[\ln\left(\frac{\lambda_n(T)}{\delta}\right)\right]^{n-2}, \quad \text{for $\delta<\frac{\lambda_n(T)}{2}$},
	\end{equation*}
	for a positive constant $c_n$ depending only on $n$. Clearly this implies \eqref{eqn:lambda_B_upper}.

	\smallskip
	Next we show that \eqref{eqn:lambda_A_lower} implies 
	\begin{equation}\label{eqn:lambda_A_lower2}
		\liminf_{\delta\to 0^+} \frac{\lambda_n(A^+(T,z_i,\delta)\cap H_0^+)}{\delta \left(\ln \frac{1}{\delta}\right)^{n-1}} \geq \frac{1}{n^{n-1}},
	\end{equation}
	for any closed half-space $H_0^+$ with $z_i\in\interior H_0^+$. Since $z_i\in \interior H_0^+$,  there is $\varepsilon>0$ such that 
	$B(z_i,\varepsilon)\subset\interior H_0^+$. Let $H_1$ be the hyperplane spanned by the points of the intersection of the boundary of $B(z_i,\varepsilon)$ with the segments $[z_i,z_j]$ for all  $j\neq i$ and denote by $H_1^+$ the closed half-space that contains $z_i$ in the interior. Then $T':=T\cap H_1^+$ is an $n$-simplex and $T'\subset T\cap H_0^+$. Furthermore, if $H^+$ is a closed halfspace such that $H$ separates $z_i$ from all other vertices of $T'$, then $T'\cap H^+ = T \cap H^+ \cap H_0^+$ and therefore $A^+(T',z_i,\delta)\subset A^+(T,z_i,\delta)\cap H_0^+$. This yields
	\begin{equation*}
		\liminf_{\delta\to 0^+} \frac{\lambda_n(A^+(T,z_i,\delta)\cap H_0^+)}{\delta \left(\ln \frac{1}{\delta}\right)^{n-1}} 
		\geq \liminf_{\delta\to 0^+} \frac{\lambda_n(A^+(T',z_i,\delta))}{\delta \left(\ln \frac{1}{\delta}\right)^{n-1}}
		\overset{\eqref{eqn:lambda_A_lower}}{\geq}  \frac{1}{n^{n-1}}.
	\end{equation*}

	Next, if $z_i\not\in H_0^+$, then $z_i\in\interior H_0^-$ and by \eqref{eqn:lambda_A_upper} and \eqref{eqn:lambda_A_lower2} we conclude
	\begin{align*}
		\limsup_{\delta\to 0^+}\! \frac{\lambda_n\big(\!A^+(T,z_i,\delta)\!\cap\! H^+_0\!\big)}{\delta \left(\ln \frac{1}{\delta}\right)^{n-1}}
		&\!\leq\! \limsup_{\delta\to 0^+}\! \frac{\lambda_n\big(\!A^+(T,z_i,\delta)\!\big)\!-\!\lambda_n\big(\!A^+(T,z_i,\delta)\!\cap\! H^-_0\!\big)}{\delta \left(\ln \frac{1}{\delta}\right)^{n-1}}\\
		&\!\leq\! \frac{1}{n^{n-1}} - \frac{1}{n^{n-1}} = 0.
	\end{align*}
	Thus \eqref{eqn:lambda_A_upper2} follows.
	
	\smallskip
	Finally, to prove \eqref{eqn:lambda_concentrate} we first note that it is sufficient to consider the standard simplex $T_n$ spanned by $0,e_1,\dotsc, e_n$.
	We denote by $S_{\id}$ the flag simplex spanned by the vertices $0,w_1,\dotsc,w_n$, where $w_i = \frac{1}{2i} \sum_{k=1}^{i} e_k$.
	Let $\Sigma(n)$ be the set of permutations of $\{1,\dotsc,n\}$. For $\sigma \in \Sigma(n)$ there is an orthogonal transformation $\alpha_{\sigma}\in\mathrm{O}(n)$ such that
	$\alpha_{\sigma}(e_i) = e_{\sigma(i)}$. We set $S_\sigma:= \alpha_{\sigma}(S_{\id})$. Then 
	\begin{enumerate}
		\item[i)] $\interior S_{\sigma} \cap \interior S_{\sigma'} =\emptyset$ for all $\sigma\neq \sigma'$, and
		\item[ii)] $\frac{1}{2}T_n = \bigcup \{S_{\sigma}:\sigma\in \Sigma(n)\}$.
	\end{enumerate}
	We have
	\begin{align*}
		\lambda_n\big(S_{\id}\cap A^+(T_n,0,\delta)\big) 
		&=\frac{1}{|\Sigma(n)|} \sum_{\sigma\in \Sigma(n)} \lambda_n\big(S_\sigma\cap A^+(T_n,0,\delta)\big) \\
		&=\frac{1}{n!} \lambda_n\left(\frac{1}{2}T_n\cap A^+(T_n,0,\delta)\right).
	\end{align*}
	This yields
	\begin{equation}\label{eqn:sid_lower}
		\liminf_{\delta\to 0^+}\! \frac{\lambda_n\big(\!S_{\id}\!\cap\! A^+(T_n,0,\delta)\!\big)}{\delta\left(\ln \frac{1}{\delta}\right)^{n-1}}
		\!=\! \frac{1}{n!} \liminf_{\delta\to 0^+}\! \frac{\lambda_n\big(\!\frac{1}{2}T_n\!\cap\! A^+(T_n,0,\delta)\!\big)}{\delta\left(\ln \frac{1}{\delta}\right)^{n-1}}
		\!\overset{\eqref{eqn:lambda_A_lower2}}{\geq}\! \frac{1}{n!\,n^{n-1}}
	\end{equation}
	and
	\begin{equation}\label{eqn:sid_upper}
		\limsup_{\delta\to 0^+}\! \frac{\lambda_n\big(\!S_{\id}\!\cap\! A^+(T_n,0,\delta)\!\big)}{\delta\left(\ln \frac{1}{\delta}\right)^{n-1}}
		\!=\! \frac{1}{n!} \limsup_{\delta\to 0^+}\! \frac{\lambda_n\big(\!\frac{1}{2}T_n\!\cap\! A^+(T_n,0,\delta)\!\big)}{\delta\left(\ln \frac{1}{\delta}\right)^{n-1}}
		\!\overset{\eqref{eqn:lambda_A_upper}}{\leq}\! \frac{1}{n!\,n^{n-1}}
	\end{equation}
	Now assume that $S$ is a flag simplex that is associated with the same flag as $S_{\id}$.
	By Lemma \ref{lem:flag_simplex_switch} there is a sequence of wedges $(W_i)_{i=0}^{n-1}$ such that $\bigcup_{i=0}^{n-1} W_i\supset S\triangle S_{\id}$ and 
	\begin{equation*}
		\limsup_{\delta\to 0^+} \frac{\lambda_n(W_i\cap A^+(T_n,0,\delta))}{\delta\left(\ln\frac{1}{\delta}\right)^{n-1}} = 0,
	\end{equation*}
	for $i=0,\dotsc,n-1$, by \cite[Lem.~1.4]{Schuett:1991}.
	Hence
	\begin{equation}\label{eqn:wi_cover}
		\limsup_{\delta\to 0^+} \frac{\lambda_n\big(\!(S_{\id}\triangle S)\!\cap\! A^+\!(T_n,0,\delta)\!\big)}
			{\delta \left(\ln \frac{1}{\delta}\right)^{n-1}}
		\!\leq\! \limsup_{\delta\to 0^+} \sum_{i=0}^n \frac{\lambda_n\big(\!W_i\!\cap\! A^+\!(T_n,0,\delta)\!\big)}
			{\delta\left(\ln\frac{1}{\delta}\right)^{n-1}} 
		= 0.
	\end{equation}
	By \eqref{eqn:sid_upper} and \eqref{eqn:wi_cover}, we derive
	\begin{multline*}
		\limsup_{\delta \to 0^+} \frac{\lambda_n(S\cap A^+(T_n,0,\delta))}{\delta \left(\ln \frac{1}{\delta}\right)^{n-1}}\\
		\!\leq\! \limsup_{\delta\to 0^+} \frac{\lambda_n\big(\!S_{\id}\!\cap\! A^+(T_n,0,\delta)\!\big)\!+\!\lambda_n\big(\!(S_{\id}\triangle S)\!\cap\! A^+(T_n,0,\delta)\!\big)}{\delta \left(\ln \frac{1}{\delta}\right)^{n-1}}
		\!\leq\! \frac{1}{n!\,n^{n-1}},
	\end{multline*}
	and by \eqref{eqn:sid_lower} and \eqref{eqn:wi_cover}, we obtain
	\begin{multline*}
		\liminf_{\delta \to 0^+} \frac{\lambda_n(S\cap A^+(T_n,0,\delta))}{\delta \left(\ln \frac{1}{\delta}\right)^{n-1}}\\
		\!\geq\! \liminf_{\delta\to 0^+} \frac{\lambda_n\big(\!S_{\id}\!\cap\! A^+(T_n,0,\delta)\!\big)\!-\!\lambda_n\big(\!(S_{\id}\triangle S)\!\cap\! A^+(T_n,0,\delta)\!\big)}{\delta \left(\ln\frac{1}{\delta}\right)^{n-1}}
		\!\geq\! \frac{1}{n!\,n^{n-1}},
	\end{multline*}
	Thus \eqref{eqn:lambda_concentrate} follows.
\end{proof}

The following concentration result is  implicitly contained in the proof presented in \cite{Schuett:1991}. It states that the volume of $P\setminus P_\delta$ is concentrated in 
the union of the flag simplices.
\begin{theorem}\label{thm:main_eucl_concentration}
	For $P\in\mathcal{P}(\R^n)$ and any family $(S(\mathbf{F}))_{\mathbf{F}\in\flag(P)}$ of flag simplices, where $S(\mathbf{F})$ is associated with $\mathbf{F}$, we have
	\begin{equation*}
		\lim_{\delta\to 0^+} \frac{\lambda_n\big( (P\setminus P_\delta)\setminus \bigcup\{S(\mathbf{F}):\mathbf{F}\in\flag(P)\}\big)}{\delta\left(\ln \frac{1}{\delta}\right)^{n-1}} = 0.
	\end{equation*}
\end{theorem}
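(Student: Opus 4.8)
The plan is to reduce the statement to the one‑vertex, one‑flag‑simplex estimate \eqref{eqn:lambda_concentrate} and combine it with the known value of the full volume derivative. Writing $\vol_n(P)-\vol_n(P_\delta)=\lambda_n(P\setminus P_\delta)$ and using that every complete flag has a unique $0$-face, so that $\sum_{v\in\vert P}|\flag_v(P)|=|\flag(P)|$, equation \eqref{eqn:deriv_inner_poly} reads
\[
	\lim_{\delta\to 0^+}\frac{\lambda_n(P\setminus P_\delta)}{\delta\left(\ln\frac1\delta\right)^{n-1}}=\sum_{v\in\vert P}\frac{|\flag_v(P)|}{n!\,n^{n-1}}.
\]
Since $\bigcup_{\mathbf{F}}S(\mathbf{F})\subset P$ we have $\lambda_n\big((P\setminus P_\delta)\setminus\bigcup_{\mathbf{F}}S(\mathbf{F})\big)=\lambda_n(P\setminus P_\delta)-\lambda_n\big((P\setminus P_\delta)\cap\bigcup_{\mathbf{F}}S(\mathbf{F})\big)$, so, as the fraction in the theorem is nonnegative, the theorem follows once we show
\[
	\liminf_{\delta\to 0^+}\frac{\lambda_n\big((P\setminus P_\delta)\cap\bigcup_{\mathbf{F}\in\flag(P)}S(\mathbf{F})\big)}{\delta\left(\ln\frac1\delta\right)^{n-1}}\ \ge\ \frac{|\flag(P)|}{n!\,n^{n-1}}.
\]

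First I would replace the given family by a pairwise disjoint one concentrated near the vertices. For $\mathbf{F}\in\flag_v(P)$ the vertex of $S(\mathbf{F})$ lying in $\relinterior F_0=\{v\}$ is $v$ itself, and scaling $S(\mathbf{F})$ towards $v$ by a small enough factor produces a flag simplex of $P$ still associated with $\mathbf{F}$ and contained in a prescribed ball $B(v,\rho)$ (a convex combination of a point of a convex set and a relative‑interior point of it is again relative‑interior). Applying Lemma~\ref{lem:disjoint}, one vertex at a time, to these contracted simplices yields flag simplices $S'(\mathbf{F})\subset S(\mathbf{F})$, with $S'(\mathbf{F})\subset B(v,\rho)$ for $\mathbf{F}\in\flag_v(P)$, that are pairwise disjoint within each $\flag_v(P)$; choosing $\rho<\tfrac12\min_{v\ne w}|v-w|$ makes the whole family $(S'(\mathbf{F}))_{\mathbf{F}\in\flag(P)}$ pairwise disjoint. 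Every half‑space witnessing $A^+(P,v,\delta)$ appears in the intersection defining $P_\delta$, so $A^+(P,v,\delta)\subset P\setminus P_\delta$, and by disjointness
\[
	\lambda_n\Big((P\setminus P_\delta)\cap\bigcup_{\mathbf{F}}S(\mathbf{F})\Big)\ \ge\ \sum_{v\in\vert P}\ \sum_{\mathbf{F}\in\flag_v(P)}\lambda_n\big(A^+(P,v,\delta)\cap S'(\mathbf{F})\big).
\]
By superadditivity of $\liminf$ over this finite sum together with $\sum_v|\flag_v(P)|=|\flag(P)|$, it now suffices to prove, for every vertex $v$ and every flag simplex $S$ of $P$ whose associated $0$-face is $\{v\}$, that
\[
	\liminf_{\delta\to 0^+}\frac{\lambda_n\big(A^+(P,v,\delta)\cap S\big)}{\delta\left(\ln\frac1\delta\right)^{n-1}}\ \ge\ \frac{1}{n!\,n^{n-1}}.
\]

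To prove this last inequality I would pass to an enclosing simplex. By Lemma~\ref{lem:upper_simplex} there is an $n$-simplex $T\supset P$ of which $S$ is a flag simplex, and the polar construction in that proof keeps $v$ as the vertex of $T$ belonging to the $0$-face of the flag of $T$ determined by $S$; hence \eqref{eqn:lambda_concentrate} applies and gives $\lambda_n(A^+(T,v,\delta)\cap S)/\big(\delta(\ln\tfrac1\delta)^{n-1}\big)\to 1/(n!\,n^{n-1})$. So it is enough to show that $(A^+(T,v,\delta)\setminus A^+(P,v,\delta))\cap S$ has volume $o\big(\delta(\ln\tfrac1\delta)^{n-1}\big)$. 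A half‑space $H$ witnessing $A^+(T,v,\delta)$ has $\lambda_n(P\cap H^+)\le\lambda_n(T\cap H^+)\le\delta$, so it fails to witness $A^+(P,v,\delta)$ only if $H^+$ also contains some vertex $u\in\vert P\setminus\{v\}$; in that case the convex set $T\cap\interior H^+$, and hence $S\cap\interior H^+$, lies in a cap of $T$ that cuts off exactly the vertex $v$ among $\vert T$ but contains the point $u\in P\subset T$, which is at distance at least $d:=\min_{u\in\vert P\setminus\{v\}}|v-u|>0$ from $v$. The union of all such ``long'' caps of $T$ has volume $o\big(\delta(\ln\tfrac1\delta)^{n-1}\big)$: such a cap is cut off by a hyperplane almost parallel to one of the $n$ facets of $T$ incident to $v$, and after covering these caps by the corresponding bounded wedges the estimate follows from \cite[Lem.~1.4]{Schuett:1991}, exactly as \eqref{eqn:lambda_B_upper} and \eqref{eqn:lambda_A_upper2} are obtained in the proof of Lemma~\ref{lem:eucl_first}.

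The only genuine obstacle I anticipate is this last estimate --- the quantitative negligibility, on the scale $\delta(\ln\tfrac1\delta)^{n-1}$, of caps of a simplex that cut off a single vertex but reach a fixed positive distance away. It is not among the assertions isolated in Lemma~\ref{lem:eucl_first} and has to be extracted from (or re‑derived through) the wedge arguments of \cite{Schuett:1991}; everything else is bookkeeping. One can sidestep the use of \eqref{eqn:deriv_inner_poly} by instead proving the matching upper bound $\limsup_{\delta\to0^+}\lambda_n(A^+(P,v,\delta))/\big(\delta(\ln\tfrac1\delta)^{n-1}\big)\le|\flag_v(P)|/(n!\,n^{n-1})$: partition a neighborhood of $v$ into the barycentric flag simplices of $P$ and upgrade the per‑simplex inequality above to a two‑sided limit on each of them; but the upper half of that per‑simplex limit again rests on the same wedge estimates, so little is gained.
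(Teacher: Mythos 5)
Your proof is correct in outline but takes a genuinely different route from the paper's. The paper never invokes the global limit \eqref{eqn:deriv_inner_poly}: it partitions $P$ exactly by the barycentric flag simplices $\bar S(\mathbf{F})$, encloses each $\bar S(\mathbf{F})$ in a flag simplex $T(\mathbf{F})$ of $P$ with $\bar S(\mathbf{F})\subset T(\mathbf{F})\subset P$ (Proposition \ref{prop:flag simplices}\,ii)), uses $T(\mathbf{F})_\delta\subset P_\delta$ to pass to the simplex $T(\mathbf{F})$, and then applies the per-vertex cancellation of Lemma \ref{lem:main_eucl_concentration_simplex}, where the upper bound \eqref{eqn:lambda_A_upper} for $A^+(T,v,\delta)$ exactly matches the sum of the $n!$ limits \eqref{eqn:lambda_concentrate}. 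Because everything stays \emph{inside} $P$, the floating body only ever needs to be compared with floating bodies of inner simplices. You instead take the total mass from \eqref{eqn:deriv_inner_poly} (legitimate, since it is a cited theorem and the paper itself uses it in Section 4, though it runs against the grain of Section 3, which re-derives that result) and prove a matching lower bound for the mass captured inside the flag simplices; this forces you to bound $A^+(P,v,\delta)$ from below via an enclosing simplex $T\supset P$ from Lemma \ref{lem:upper_simplex}, and that is where your extra cost appears. The bookkeeping steps (shrinking to pairwise disjoint flag simplices concentrated near the vertices, the inclusion $A^+(P,v,\delta)\subset P\setminus P_\delta$, and the fact that $v$ is a vertex of the enclosing simplex so that \eqref{eqn:lambda_concentrate} applies) are all sound and consistent with what the paper does implicitly in the proof of Proposition \ref{thm:main_theorem_weighted_local}.

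The one step you rightly single out --- that $(A^+(T,v,\delta)\setminus A^+(P,v,\delta))\cap S$ is $o\big(\delta(\ln\tfrac1\delta)^{n-1}\big)$ --- is indeed not contained in Lemma \ref{lem:eucl_first} and must be supplied. It is true, and the cleanest argument is a $B^+$-type product computation rather than a wedge covering: a cap of $T$ that cuts off only $v=z_0$ among the vertices of $T$ is the simplex $[z_0,z_0+t_1(z_1-z_0),\dotsc,z_0+t_n(z_n-z_0)]$ with $\lambda_n(T)\prod_{i=1}^n t_i\leq\delta$; if it also contains a fixed $u\in\vert P\setminus\{v\}$ with barycentric coordinates $(s_i)$ at $v$, then $t_i\geq s_i$ for all $i$, so $t_{i_0}\geq s_{i_0}>0$ for some fixed $i_0$ and hence $\prod_{i\neq i_0}t_i\leq \delta/(s_{i_0}\lambda_n(T))$. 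The union of all such caps is therefore contained in the set of $x\in T$ whose coordinates satisfy $\prod_{i\neq i_0}s_i(x)\leq \delta/(s_{i_0}\lambda_n(T))$, whose volume is $O\big(\delta(\ln\tfrac1\delta)^{n-2}\big)$ by the standard computation behind \eqref{eqn:lambda_A_upper}. With this lemma inserted, your proof is complete; the trade-off relative to the paper is that you import \eqref{eqn:deriv_inner_poly} and one additional cap estimate, while the paper's argument is self-contained given Lemma \ref{lem:eucl_first} but requires the polarity/subdivision machinery of Propositions \ref{prop:flag simplex} and \ref{prop:flag simplices}.
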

For the reader's convenience we  include the proof of Theorem \ref{thm:main_eucl_concentration}. The first step is the following lemma.

\begin{lemma}\label{lem:main_eucl_concentration_simplex}
	Let $T$ be an $n$-simplex and let $v\in\vert T$.
	For any family of flag simplices $(S(\mathbf{F}))_{\mathbf{F}\in\flag_v(T)}$, where $S(\mathbf{F})$ is associated with the flag $\mathbf{F}$, we have
	\begin{equation*}
		\limsup_{\delta\to 0^+}\frac{\lambda_n\big(A^+(T,v,\delta)\setminus \bigcup \{ S(\mathbf{F}):\mathbf{F}\in\flag_v(T)\}\big)}{\delta\left(\ln\frac{1}{\delta}\right)^{n-1}} = 0.
	\end{equation*}
\end{lemma}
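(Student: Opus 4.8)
The plan is to reduce the statement to an exact accounting of Lebesgue measure built entirely on Lemma~\ref{lem:eucl_first}. First I would record the combinatorial count $\left|\flag_v(T)\right| = n!$: the symmetric group on the $n+1$ vertices of $T$ acts transitively on $\vert T$ while permuting $\flag(T)$, so each vertex lies in $(n+1)!/(n+1) = n!$ complete flags, and every such flag begins with the $0$-face $\{v\}$. Since shrinking the simplices $S(\mathbf{F})$ only enlarges the set $A^+(T,v,\delta)\setminus\bigcup_{\mathbf{F}}S(\mathbf{F})$, it suffices to prove the claim after replacing, via Lemma~\ref{lem:disjoint}, the given family by flag simplices with the same index set, each contained in the corresponding old $S(\mathbf{F})$, each still associated with $\mathbf{F}$, and now with pairwise disjoint interiors. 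So I assume from now on that the $S(\mathbf{F})$ themselves have pairwise disjoint interiors; note that each is then a flag simplex of $T$ having $v$ as a vertex, so \eqref{eqn:lambda_concentrate} applies to it.

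Next comes the computation. Because the $S(\mathbf{F})$ have pairwise disjoint interiors and simplex boundaries are Lebesgue-null,
\[
	\lambda_n\Big(A^+(T,v,\delta)\cap\bigcup_{\mathbf{F}\in\flag_v(T)}S(\mathbf{F})\Big)
	=\sum_{\mathbf{F}\in\flag_v(T)}\lambda_n\big(A^+(T,v,\delta)\cap S(\mathbf{F})\big)
\]
for every $\delta>0$. Dividing by $\delta\left(\ln\frac{1}{\delta}\right)^{n-1}$, each summand tends to $1/(n!\,n^{n-1})$ by \eqref{eqn:lambda_concentrate}, so the whole sum tends to $n!\cdot 1/(n!\,n^{n-1}) = 1/n^{n-1}$. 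On the other hand, \eqref{eqn:lambda_A_lower} gives $\liminf_{\delta\to0^+}\lambda_n(A^+(T,v,\delta))/(\delta(\ln\frac{1}{\delta})^{n-1})\geq 1/n^{n-1}$, while \eqref{eqn:lambda_A_upper} gives the matching $\limsup\leq 1/n^{n-1}$: its leading term is asymptotic to $\delta(\ln\frac{1}{\delta})^{n-1}/n^{n-1}$ and the remaining term is $O(\delta(\ln\frac{1}{\delta})^{n-2})$, hence negligible after normalization. Thus $\lambda_n(A^+(T,v,\delta))/(\delta(\ln\frac{1}{\delta})^{n-1})\to 1/n^{n-1}$. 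Writing $A^+(T,v,\delta)$ as the disjoint union of its part inside $\bigcup_{\mathbf{F}}S(\mathbf{F})$ and its part outside, and subtracting the two limits just computed, the normalized measure of the leftover converges to $1/n^{n-1}-1/n^{n-1}=0$, which is exactly the assertion.

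The content of the argument is entirely imported: the two-sided estimate for $\lambda_n(A^+(T,v,\delta))$ and, above all, the concentration identity \eqref{eqn:lambda_concentrate} (which itself rests on Lemma~\ref{lem:flag_simplex_switch} and \cite[Lem.~1.4]{Schuett:1991}), together with the disjointification provided by Lemma~\ref{lem:disjoint}. I foresee no genuine obstacle here; the one delicate point is that the number $n!$ of flags through $v$ is precisely what makes the $n!$ contributions of size $1/(n!\,n^{n-1})$ exhaust the full mass $1/n^{n-1}$ of $A^+(T,v,\delta)$, which is what forces the complement to be asymptotically negligible. Skipping the reduction to a pairwise disjoint subfamily would leave only an inequality pointing the wrong way, so invoking Lemma~\ref{lem:disjoint} at the outset is the essential preparatory step.
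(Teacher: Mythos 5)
Your proof is correct and follows essentially the same route as the paper: disjointify the family via Lemma~\ref{lem:disjoint}, decompose $A^+(T,v,\delta)$ into the sum over the $n!$ flag simplices through $v$ plus the leftover, and let \eqref{eqn:lambda_concentrate} together with the two-sided estimate for $\lambda_n(A^+(T,v,\delta))$ from \eqref{eqn:lambda_A_lower}--\eqref{eqn:lambda_A_upper} force the leftover to be $o\big(\delta(\ln\tfrac{1}{\delta})^{n-1}\big)$. The paper phrases the final step slightly more economically (a $\limsup$ minus a sum of $\liminf$s rather than two full limits), but the content is identical.
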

\begin{proof}
	By Lemma \ref{lem:disjoint}, given a family $(S(\mathbf{F}))_{\mathbf{F}\in\flag(T)}$ of flag simplices there always exists a family $(S'(\mathbf{F}))_{\mathbf{F}\in\flag(T)}$ that is disjoint and such that $S'(\mathbf{F})\subset S(\mathbf{F})$.
	Hence, without loss of generality, we may assume that $(S(\mathbf{F}))_{\mathbf{F}\in\flag(T)}$ is a disjoint family of flag simplices, that is, $\interior S(\mathbf{F})\cap \interior S(\mathbf{F}') = \emptyset$, for $\mathbf{F}\neq\mathbf{F}'$. Then
	\begin{multline*}
		\lambda_n\left(A^+(T, v, \delta)\setminus \bigcup \{S(\mathbf{F}) : \mathbf{F} \in \flag_v(T)\} \right) \\
		= \lambda_n\big(A^+(T, v, \delta)\big) -\sum_{\mathbf{F}\in\flag_v(T)}\lambda_n\big(\!S(\mathbf{F}) \cap A^+(T,v,\delta)\big).
	\end{multline*}
	By \eqref{eqn:lambda_A_upper}, \eqref{eqn:lambda_concentrate} and since $\left|\flag_v(T)\right| = n!$, we conclude
	\begin{multline*}
		\limsup_{\delta\to 0^+}\frac{\lambda_n\big(A^+(T,v,\delta)\setminus \bigcup \{ S(\mathbf{F}):\mathbf{F}\in\flag_v(T)\}\big)}
			{\delta\left(\ln\frac{1}{\delta}\right)^{n-1}}\\
		\leq \limsup_{\delta\to 0^+} \frac{\lambda_n(A^+(T,v,\delta))}{\delta\left(\ln\frac{1}{\delta}\right)^{n-1}}
			- \sum_{\mathbf{F}\in\flag_v(T)} \liminf_{\delta\to 0^+} \frac{\lambda_n(S(\mathbf{F})\cap A^+(T,v,\delta))}{\delta\left(\ln\frac{1}{\delta}\right)^{n-1}} \\
		= \frac{1}{n^{n-1}}-\frac{\left|\flag_v(T)\right|}{n!\,n^{n-1}} = 0. \tag*{$\qed$}
	\end{multline*}
	\renewcommand{\qedsymbol}{}
\end{proof}

\vspace{-0.5cm}
It is easy to improve Lemma \ref{lem:main_eucl_concentration_simplex} by combining \eqref{eqn:float_split} with \eqref{eqn:lambda_B_upper}.
\begin{corollary}\label{cor:euclidean_concentration_p}
	Let $T$ be an $n$-simplex.
	For any family $(S(\mathbf{F}))_{\mathbf{F}\in\flag(T)}$ of flag simplices, where $S(\mathbf{F})$ is associated with the flag $\mathbf{F}$, we have
	\begin{equation*}
		\limsup_{\delta\to 0^+}\frac{\lambda_n\big((T\setminus T_\delta)\setminus \bigcup \{ S(\mathbf{F}):\mathbf{F}\in\flag(T)\}\big)}
			{\delta\left(\ln\frac{1}{\delta}\right)^{n-1}} = 0.
	\end{equation*}
\end{corollary}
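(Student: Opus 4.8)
The plan is to reduce to the vertex-local statement already established in Lemma \ref{lem:main_eucl_concentration_simplex}, using the cap decomposition \eqref{eqn:float_split} and the negligibility of two-vertex caps recorded in \eqref{eqn:lambda_B_upper}. First I would fix the family $(S(\mathbf{F}))_{\mathbf{F}\in\flag(T)}$ and observe that, since the $0$-face of any complete flag of $T$ is a single vertex, $\flag(T)$ is the disjoint union of the sets $\flag_v(T)$ over $v\in\vert T$. Consequently, for each vertex $v$, discarding \emph{all} the flag simplices is at least as effective as discarding only those whose flag starts at $v$, so
\[
 A^+(T,v,\delta)\setminus\bigcup_{\mathbf{F}\in\flag(T)}S(\mathbf{F})\ \subseteq\ A^+(T,v,\delta)\setminus\bigcup_{\mathbf{F}\in\flag_v(T)}S(\mathbf{F}).
\]

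Next I would apply \eqref{eqn:float_split} to $T$, which is a polytope with $n+1$ vertices, to obtain the set inclusion
\[
 (T\setminus T_\delta)\setminus\bigcup_{\mathbf{F}\in\flag(T)}S(\mathbf{F})\ \subseteq\ \bigcup_{v\in\vert T}\Big(A^+(T,v,\delta)\setminus\bigcup_{\mathbf{F}\in\flag_v(T)}S(\mathbf{F})\Big)\ \cup\ \bigcup_{\substack{v,w\in\vert T\\ v\neq w}}B^+(T,v,w,\delta).
\]
Then I would apply $\lambda_n$ to both sides, use finite subadditivity of the measure, divide through by $\delta(\ln\frac{1}{\delta})^{n-1}$, and pass to $\limsup$ as $\delta\to 0^+$, bounding the $\limsup$ of a finite sum of nonnegative quantities by the sum of the individual $\limsup$'s. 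Each of the $n+1$ summands coming from the sets $A^+(T,v,\delta)\setminus\bigcup_{\mathbf{F}\in\flag_v(T)}S(\mathbf{F})$ contributes $0$ by Lemma \ref{lem:main_eucl_concentration_simplex}, and each of the $\binom{n+1}{2}$ summands coming from the sets $B^+(T,v,w,\delta)$ contributes $0$ by \eqref{eqn:lambda_B_upper}. Hence the $\limsup$ in the statement is $\leq 0$, and since it is the $\limsup$ of a nonnegative quantity, it equals $0$.

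I do not expect any substantive obstacle here: the content is entirely carried by Lemma \ref{lem:main_eucl_concentration_simplex} and by \eqref{eqn:lambda_B_upper}, and what remains is the routine bookkeeping that \eqref{eqn:float_split} applies verbatim to the simplex $T$ and that $\flag(T)=\biguplus_{v\in\vert T}\flag_v(T)$. The only point to watch is that the cap inclusion in \eqref{eqn:float_split} is used in the direction that makes subadditivity of $\lambda_n$ — rather than a spurious reverse inequality — the tool being invoked.
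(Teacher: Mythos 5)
Your proposal is correct and follows exactly the route the paper intends: the Corollary is stated in the paper with only the remark that it follows "by combining \eqref{eqn:float_split} with \eqref{eqn:lambda_B_upper}" together with Lemma \ref{lem:main_eucl_concentration_simplex}, and your write-up simply fills in that routine bookkeeping (the inclusion into the vertex-local sets, subadditivity, and the vanishing of each term). No gaps.
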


We are now ready to prove Theorem \ref{thm:main_eucl_concentration}. 
For any subdivison  $(S(\mathbf{F}))_{\mathbf{F}\in\flag(P)}$  of $P$, we
construct flag simplices $S'(\mathbf{F})$ and $T(\mathbf{F})$ such that $S'(\mathbf{F})\subset S(\mathbf{F})\subset T(\mathbf{F}) \subset P$. 
Then we apply  Corollary \ref{cor:euclidean_concentration_p} to $T(\mathbf{F})$ and the concentration statement follows for $S'(\mathbf{F})$ and hence for  
$S(\mathbf{F})$. The argument is slightly more complicated, because in general we may not assume that $(S(\mathbf{F}))_{\mathbf{F}\in\flag(P)}$ is a subdivision of $P$. To remedy this, we introduce the barycenter subdivision of $P$. But the same argument will work for any simplex subdivision.
\begin{proof}[Proof of Theorem \ref{thm:main_eucl_concentration}]
	For a polytope $P\in\mathcal{P}(\R^n)$ the barycenter subdivision gives a family of flag simplices $\bar{S}(\mathbf{F})$ associated with $\mathbf{F}\in\flag(P)$, such that $\interior \bar{S}(\mathbf{F})\cap \interior \bar{S}(\mathbf{F}') = \emptyset$ if $\mathbf{F}\neq \mathbf{F}'$ and 
	\begin{equation*}
		P=\bigcup_{\mathbf{F}\in\flag(P)} \bar{S}(\mathbf{F}).
	\end{equation*}
	By Proposition \ref{prop:flag simplices} ii), for any flag simplex $\bar{S}(\mathbf{F})$, we can choose a flag simplex $T(\mathbf{F})$ of $P$ associated with $F$ such that
	\begin{enumerate}
		\item[i)] $\bar{S}(\mathbf{F})\subset T(\mathbf{F})\subset P$ and
		\item[ii)] $\bar{S}(\mathbf{F})$ is a flag simplex of $T(\mathbf{F})$.
	\end{enumerate}
	Since $T(\mathbf{F}) \subset P$ we have $T(\mathbf{F})_\delta \subset P_\delta$ and therefore
	\begin{equation*}
		\bar{S}(\mathbf{F})\setminus P_\delta \subset \bar{S}(\mathbf{F})\setminus T(\mathbf{F})_\delta,
	\end{equation*}
	for each $\mathbf{F}\in\flag(P)$.
	Hence
	\begin{align*}
		\lambda_n\left((P\setminus P_\delta)\setminus \bigcup\{S(\mathbf{G}):\mathbf{G}\in\flag(P)\}\right)
		&\leq \sum_{\mathbf{F}\in\flag(P)}\!\!\! \lambda_n\left( (\bar{S}(\mathbf{F})\setminus P_\delta)\setminus S(\mathbf{F})\right)\\
		&\leq \sum_{\mathbf{F}\in\flag(P)}\!\!\! \lambda_n\left( (\bar{S}(\mathbf{F})\setminus T(\mathbf{F})_\delta)\setminus S(\mathbf{F})\right)
	\end{align*}
	By Proposition \ref{prop:flag simplices} i),  there is flag simplex $S'(\mathbf{F})$ of $P$ associated with $\mathbf{F}$ such that $S'(\mathbf{F})\subset \bar{S}(\mathbf{F})\cap S(\mathbf{F})$ and therefore $S'(\mathbf{F})$ is also a flag simplex of $T(\mathbf{F})$ associated with the same flag as $\bar{S}(\mathbf{F})$. Hence
	\begin{equation*}
		\limsup_{\delta\to 0^+} \frac{\lambda_n\left(\!(\bar{S}(\mathbf{F})\setminus T(\mathbf{F})_\delta)\setminus S(\mathbf{F})\!\right)}
			{\delta\left(\ln \frac{1}{\delta}\right)^{n-1}}
		\!\leq\! \limsup_{\delta\to 0^+} \frac{\lambda_n\left(\!(\bar{S}(\mathbf{F})\setminus T(\mathbf{F})_\delta)\setminus S'(\mathbf{F})\!\right)}
			{\delta\left(\ln \frac{1}{\delta}\right)^{n-1}}
		\!=\! 0,
	\end{equation*}
	by Corollary \ref{cor:euclidean_concentration_p}, because $\bar{S}(\mathbf{F})$ and $S'(\mathbf{F})$ are flag simplices of $T(\mathbf{F})$ associated with the same flag. This concludes the proof of the theorem.
\end{proof}

\section{Proof of the Main Theorem \ref{thm:main_theorem}}

First, let us prove some obvious bounds for the left-hand side of \eqref{eqn:deriv_inner_poly_weighted}. 
We set $c:= \min_{x\in P} \phi(x)$ and $C:=\max_{x\in P} \phi(x)$. Since $\phi:P\to (0,\infty)$ is continuous and $P$ is compact we have $c>0$.
Furthermore, $c\leq \phi(x) \leq C$ for all $x\in P$ and therefore,
\begin{equation}\label{eqn:wfloat_bound}
	P_{\delta/c} \subset P_\delta^\phi \subset P_{\delta/C},
\end{equation}
for all $\delta>0$ small enough.
By \eqref{eqn:deriv_inner_poly}, this yields
\begin{align*}
	\limsup_{\delta\to 0^+} \frac{ \Psi(P)-\Psi\big(P_\delta^\phi\big)}{\delta\, \left(\ln \frac{1}{\delta}\right)^{n-1}}
	&\leq \left(\max_{x\in P}  \psi(x)\right) \limsup_{\delta\to 0^+} \frac{\lambda_n(P)-\lambda_n\big(P_{\delta/c}\big)}{\delta\, \left(\ln\frac{1}{\delta}\right)^{n-1}}\\
	&= \frac{\max_{x\in P} \psi(x)}{\min_{x\in P} \phi(x)} \frac{\left|\flag(P)\right|}{n!\,n^{n-1}},
\end{align*}
and
\begin{equation*}
	\liminf_{\delta\to 0^+} \frac{ \Psi(P)-\Psi(P_\delta^\phi )}{\delta\, \left(\ln \frac{1}{\delta}\right)^{n-1}}
	\geq \frac{\min_{x\in P} \psi(x)}{\max_{x\in P} \phi(x)}\, \frac{\left|\flag(P)\right|}{n!\,n^{n-1}}.
\end{equation*}

We prove Theorem \ref{thm:main_theorem} in two steps: In Proposition \ref{thm:main_theorem_weighted_upper} we show that $\Psi(P\setminus P_\delta^\phi)$ is concentrated in the flag simplices and in Theorem \ref{thm:main_theorem_weighted_local} we calculate the limit for a fixed flag simplex.

The fact that $\Psi(P\setminus P_\delta^\phi)$ is concentrated in the flag simplices of $P$ follows from the result for uniform measures, Theorem \ref{thm:main_eucl_concentration}, by bounding the weight $\psi$ from above and the weight $\phi$ from below by positive constants.
\begin{proposition}[Flag-Simplex Concentration]\label{thm:main_theorem_weighted_upper}
	Let $P\in\mathcal{P}(\R^n)$ and let $\phi,\psi: P \to (0,\infty)$ be continuous.
	For any family $(S(\mathbf{F}))_{\mathbf{F}\in\flag(P)}$ of flag simplices of $P$, where $S(\mathbf{F})$ is associated with $\mathbf{F}$, we have
	\begin{equation*}
		\limsup_{\delta\to 0^+} \frac{\Psi\big((P\setminus P_\delta^\phi) \setminus \bigcup \{S(\mathbf{F}):\mathbf{F}\in\flag(P)\}\big)}
			{\delta\left(\ln\frac{1}{\delta}\right)^{n-1}} = 0.
	\end{equation*}
\end{proposition}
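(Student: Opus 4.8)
The plan is to reduce the weighted concentration statement to the unweighted one, Theorem~\ref{thm:main_eucl_concentration}, by a simple sandwich. Since $\phi$ and $\psi$ are continuous and strictly positive on the compact polytope $P$, I would first fix the constants $c := \min_{x\in P}\phi(x)$ and $M := \max_{x\in P}\psi(x)$, both of which lie in $(0,\infty)$, and record the trivial estimate $\Psi(A) = \int_A \psi \le M\,\lambda_n(A)$ for every Borel set $A\subset P$. Writing $U := \bigcup\{S(\mathbf{F}):\mathbf{F}\in\flag(P)\}$, the goal is to show that $\Psi\big((P\setminus P_\delta^\phi)\setminus U\big)$ is $o\big(\delta(\ln\tfrac1\delta)^{n-1}\big)$.

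Next I would invoke the left inclusion in \eqref{eqn:wfloat_bound}, namely $P_{\delta/c}\subset P_\delta^\phi$ for all sufficiently small $\delta>0$. This gives the set inclusion $(P\setminus P_\delta^\phi)\setminus U \subset (P\setminus P_{\delta/c})\setminus U$, and hence
\[
	\Psi\big((P\setminus P_\delta^\phi)\setminus U\big) \;\le\; M\,\lambda_n\big((P\setminus P_{\delta/c})\setminus U\big).
\]
Now the family $(S(\mathbf{F}))_{\mathbf{F}\in\flag(P)}$ is exactly of the type appearing in Theorem~\ref{thm:main_eucl_concentration}, so applying that theorem with the parameter $\delta/c$ in place of $\delta$ yields
\[
	\lim_{\delta\to 0^+}\frac{\lambda_n\big((P\setminus P_{\delta/c})\setminus U\big)}{(\delta/c)\left(\ln\frac{c}{\delta}\right)^{n-1}} = 0.
\]

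To conclude, I would compare the normalizing factors: since $\ln\frac{c}{\delta} = \ln c + \ln\frac1\delta$ is asymptotic to $\ln\frac1\delta$ as $\delta\to 0^+$,
\[
	\frac{(\delta/c)\left(\ln\frac{c}{\delta}\right)^{n-1}}{\delta\left(\ln\frac1\delta\right)^{n-1}} \;=\; \frac1c\left(\frac{\ln c+\ln\frac1\delta}{\ln\frac1\delta}\right)^{n-1} \;\longrightarrow\; \frac1c .
\]
Multiplying this bounded convergent factor by the preceding null limit, and using $\Psi(A)\le M\lambda_n(A)$, gives
\[
	\limsup_{\delta\to 0^+}\frac{\Psi\big((P\setminus P_\delta^\phi)\setminus U\big)}{\delta\left(\ln\frac1\delta\right)^{n-1}} \;\le\; M\cdot\frac1c\cdot 0 \;=\; 0,
\]
and since the left-hand side is clearly nonnegative it equals $0$, as claimed.

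There is no real obstacle here: all the analytic work — the actual concentration of Lebesgue volume in the flag simplices — is already contained in Theorem~\ref{thm:main_eucl_concentration}, and the only point requiring any care is that replacing $\delta$ by $\delta/c$ alters the logarithmic normalization only by the harmless asymptotic factor computed above. This is precisely why it is worth isolating the statement: it transfers the unweighted result to arbitrary continuous weights in a form ready for use in the proof of Theorem~\ref{thm:main_theorem}.
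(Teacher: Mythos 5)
Your proposal is correct and follows essentially the same route as the paper: bound $\phi$ below by $c=\min\phi$ to get $P_{\delta/c}\subset P_\delta^\phi$, bound $\Psi$ above by $(\max\psi)\lambda_n$, and invoke Theorem \ref{thm:main_eucl_concentration}. The only difference is that you explicitly verify the harmless change of normalization from $\delta$ to $\delta/c$, which the paper leaves implicit.
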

\begin{proof}
	We set $c:=\min_{x\in P} \phi(x)$. Then $c>0$ since $\phi$ is continuous and positive and $P$ is compact.
	By \eqref{eqn:wfloat_bound} we have $P\setminus P_{\delta}^\phi \subset P\setminus P_{\delta/c}$ and therefore
	\begin{multline*}
		\Psi\left((P\setminus P_\delta^\phi) \setminus \bigcup \{S(\mathbf{F}):\mathbf{F}\in\flag(P)\}\right)\\
		\leq \left(\max_{x\in P} \psi(x)\right) 
			\lambda_n\left((P\setminus P_{\delta/c}) \setminus \bigcup \{S(\mathbf{F}):\mathbf{F}\in\flag(P)\}\right).
	\end{multline*}
	Thus the statement follows by Theorem \ref{thm:main_eucl_concentration}.
\end{proof}

We now proceed to the second step, that is, finding the limit for a fixed flag simplex $S$ of $P$.
\begin{proposition}[Flag-Simplex Limit]\label{thm:main_theorem_weighted_local}
	Let $P\in\mathcal{P}(\R^n)$ and let $\phi,\psi :P\to (0,\infty)$ be continuous. 
	For $v\in\vert P$ and a flag simplex $S$ of $P$ with $v\in S$, we have
	\begin{equation*}
			\lim_{\delta\to 0^+} \frac{\Psi(S \setminus P_\delta^\phi)}{\delta\left(\ln\frac{1}{\delta}\right)^{n-1}}
			= \frac{1}{n!\,n^{n-1}} \frac{\psi(v)}{\phi(v)}.
	\end{equation*}
\end{proposition}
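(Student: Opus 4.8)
The plan is to reduce the statement to the uniform simplex estimates of Lemma~\ref{lem:eucl_first} in two moves: first squeezing $P$ between two simplices for which $S$ remains a flag simplex, and then localizing both weights near $v$. For the first move, Proposition~\ref{prop:flag simplices}~ii) supplies an $n$-simplex $T'$ with $S\subset T'\subset P$ having $v$ as a vertex and $S$ as a flag simplex, and Lemma~\ref{lem:upper_simplex} supplies an $n$-simplex $T\supset P$ with the same properties (extend $\phi,\psi$ to continuous positive functions on $T$; their values at $v$ are unchanged). If $K\subset L$ and $\phi>0$ on $L$, then $\Phi(K\cap H^+)\le\Phi(L\cap H^+)$ for every closed half-space, whence $K_\delta^\phi\subset L_\delta^\phi$; applied to $T'\subset P\subset T$ this gives $(T')_\delta^\phi\subset P_\delta^\phi\subset T_\delta^\phi$, so that $\Psi(S\setminus T_\delta^{\phi})\le\Psi(S\setminus P_\delta^{\phi})\le\Psi(S\setminus (T')_\delta^{\phi})$. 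Since by the statement to be proved (for a simplex) both outer limits equal $\tfrac{\psi(v)}{\phi(v)}\tfrac1{n!\,n^{n-1}}$ and do not depend on the chosen extensions, it suffices to treat the case in which $P$ is an $n$-simplex $T$ with $v$ as a vertex.

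For such a $T$, decompose $T\setminus T_\delta^\phi$ as in~\eqref{eqn:float_split}, using the weighted variants $A^{+,\phi}(T,z,\delta)$ and $B^{+,\phi}(T,z,z',\delta)$ of~\eqref{eqn:def_a_eucl}--\eqref{eqn:def_b_eucl} (with $\lambda_n(T\cap H^+)\le\delta$ replaced by $\Phi(T\cap H^+)\le\delta$). With $c:=\min_T\phi>0$ one has $\Phi\ge c\,\lambda_n$, hence $A^{+,\phi}(T,z,\delta)\subset A^+(T,z,\delta/c)$ and $B^{+,\phi}(T,z,z',\delta)\subset B^+(T,z,z',\delta/c)$. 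As $S$ is a flag simplex of $T$ with $v\in S$, by~\eqref{eqn:simplex-flag} it meets $\vert T$ only in $v$; so for each vertex $z\ne v$ there is a half-space $H_z^+\supset S$ with $z\notin H_z^+$, and then~\eqref{eqn:lambda_A_upper2} and~\eqref{eqn:lambda_B_upper} (after the rescaling $\delta\leadsto\delta/c$, which alters the factor $(\ln\tfrac1\delta)^{n-1}$ only by $1+o(1)$) show that $\Psi\bigl(S\cap A^{+,\phi}(T,z,\delta)\bigr)$ and $\Psi\bigl(S\cap B^{+,\phi}(T,z,z',\delta)\bigr)$ are $o\bigl(\delta(\ln\tfrac1\delta)^{n-1}\bigr)$. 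Covering the compact set $T\setminus B(v,r)$ by finitely many half-spaces avoiding $v$ and using~\eqref{eqn:lambda_A_upper2} once more, $\lambda_n\bigl(A^{+,\phi}(T,v,\delta)\setminus B(v,r)\bigr)=o\bigl(\delta(\ln\tfrac1\delta)^{n-1}\bigr)$ for every $r>0$. Hence $\Psi(S\setminus T_\delta^\phi)=\Psi\bigl(S\cap A^{+,\phi}(T,v,\delta)\cap B(v,r)\bigr)+o\bigl(\delta(\ln\tfrac1\delta)^{n-1}\bigr)$.

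Now fix $\varepsilon>0$ and choose $r>0$ so that $|\phi-\phi(v)|<\varepsilon$ and $|\psi-\psi(v)|<\varepsilon$ on $B(v,r)\cap T$. On $B(v,r)$ we may replace $\psi$ by $\psi(v)$ up to the factor $1\pm\varepsilon/\psi(v)$, so it remains to compare $\lambda_n\bigl(S\cap A^{+,\phi}(T,v,\delta)\cap B(v,r)\bigr)$ with the uniform cut-off region. Here one uses that a cap of $\Phi$-volume $\le\delta$ contained in $B(v,r)$ has Lebesgue volume between $\tfrac{\delta}{\phi(v)+\varepsilon}$ and $\tfrac{\delta}{\phi(v)-\varepsilon}$ (so those caps match uniform caps for the rescaled parameter), while the caps escaping $B(v,r)$ contribute to $\lambda_n\bigl(S\cap(\,\cdot\,)\bigr)$ only a term $o\bigl(\delta(\ln\tfrac1\delta)^{n-1}\bigr)$, exactly as in the passage from $S_{\id}$ to a general flag simplex in the proof of~\eqref{eqn:lambda_concentrate} (via~\eqref{eqn:lambda_A_upper2} and the wedge covering of Lemma~\ref{lem:flag_simplex_switch} together with~\cite[Lem.~1.4]{Schuett:1991}). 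Feeding this, together with~\eqref{eqn:lambda_concentrate} applied with $\delta$ replaced by $\tfrac{\delta}{\phi(v)\pm\varepsilon}$ (and noting $\tfrac{\delta}{\phi(v)\pm\varepsilon}\bigl(\ln\tfrac{\phi(v)\pm\varepsilon}{\delta}\bigr)^{n-1}\sim\tfrac{1}{\phi(v)\pm\varepsilon}\,\delta(\ln\tfrac1\delta)^{n-1}$) and with the crude bound~\eqref{eqn:lambda_A_upper} to keep everything $O\bigl(\delta(\ln\tfrac1\delta)^{n-1}\bigr)$, into the previous paragraph gives $\tfrac{\psi(v)-\varepsilon}{\phi(v)+\varepsilon}\tfrac{1}{n!\,n^{n-1}}\le\liminf_{\delta\to0^+}\tfrac{\Psi(S\setminus T_\delta^\phi)}{\delta(\ln\frac1\delta)^{n-1}}\le\limsup_{\delta\to0^+}\tfrac{\Psi(S\setminus T_\delta^\phi)}{\delta(\ln\frac1\delta)^{n-1}}\le\tfrac{\psi(v)+\varepsilon}{\phi(v)-\varepsilon}\tfrac{1}{n!\,n^{n-1}}$. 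Letting $\varepsilon\to0^+$ proves the claim for $T$, and then the squeeze of the first paragraph proves it for $P$.

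I expect the real difficulty to be the comparison in the third paragraph. The regions $A^{+,\phi}(T,v,\delta)$ and $A^{+}(T,v,\delta/\phi(v))$ have the same main mass at scale $\delta(\ln\tfrac1\delta)^{n-1}$ -- this is the content of~\eqref{eqn:lambda_A_upper2} -- but caps of small $\Phi$-volume at $v$ are \emph{not} forced to be geometrically small (long, thin slivers along an edge through $v$ have small volume yet reach far), so the pointwise pinching $\Phi\approx\phi(v)\,\lambda_n$ near $v$ cannot be applied to a whole cap at once; identifying the two regions up to a negligible set therefore requires the same careful estimate, and the same wedge-covering tool (Lemma~\ref{lem:flag_simplex_switch}), that underlies the proof of~\eqref{eqn:lambda_concentrate} itself.
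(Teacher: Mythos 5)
Your overall architecture matches the paper's: squeeze $P$ between an inner flag simplex $T_\ell$ (Proposition \ref{prop:flag simplices}~ii)) and an outer simplex $T_u\supset P$ (Lemma \ref{lem:upper_simplex}) with continuously extended weights, reduce to a single simplex, discard the contributions of the other vertices and of the $B^+$-sets, and localize $\phi,\psi$ near $v$ before invoking the uniform estimates of Lemma \ref{lem:eucl_first}. Your lower bound is essentially the paper's lower bound. For the upper bound you diverge: the paper symmetrizes the weights over the permutations fixing $e_0$, runs the halfspace estimate of Lemma \ref{lem:main_simplex_upper_weighted}, and then extracts the factor $1/n!$ from the dissection of $T_n$ into the $n!$ congruent simplices $S_\sigma$ together with the concentration result (Proposition \ref{thm:main_theorem_weighted_upper}); you instead compare $A^{+}_{\phi}(T,v,\delta)$ directly with $A^{+}(T,v,\delta/\phi(v))$ near $v$ and feed the result into \eqref{eqn:lambda_concentrate}, which already carries the $1/n!$. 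Both routes are viable, and yours is somewhat more direct.

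The one step whose justification is wrong is exactly the one you flag as the real difficulty: the claim that caps of $\Phi$-measure at most $\delta$ which escape the localization neighborhood contribute only $o\bigl(\delta(\ln\tfrac1\delta)^{n-1}\bigr)$ to $\lambda_n(S\cap{}\cdot{})$. The wedge covering of Lemma \ref{lem:flag_simplex_switch} together with \cite[Lem.~1.4]{Schuett:1991} controls the symmetric difference of two flag simplices determining the same flag; it says nothing about escaping caps. The correct mechanism --- and the reason the paper localizes with the homothets $\varepsilon T_n$ rather than with balls $B(v,r)$ --- is the dichotomy behind \eqref{eqn:a_lower}: a halfspace $H^+$ with $v\in\interior H^+$ either satisfies $H^+\cap T=H^+\cap\bigl(v+\varepsilon(T-v)\bigr)$, in which case the cap is a cap of the homothet and the pinching $\Phi\approx\phi(v)\lambda_n$ applies to the whole cap, or it contains an entire segment $[v,v+\varepsilon(z_i-v)]$ for some other vertex $z_i$ of $T$. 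In the latter case the cap, restricted to the homothet, lies in $B^+\bigl(v+\varepsilon(T-v),v,v+\varepsilon(z_i-v),\delta/c\bigr)$ and is negligible by \eqref{eqn:lambda_B_upper} (this is the paper's \eqref{eqn:B_eps_bound}), while its part outside the homothet is negligible by the covering argument via \eqref{eqn:lambda_A_upper2} that you already set up. Replacing $B(v,r)$ by $v+\varepsilon(T-v)$ throughout (the two are sandwiched near the vertex $v$, so the continuity localization is unaffected), your argument closes; as written, the cited tool does not prove the step.
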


We prove this proposition by first considering $n$-simplices $T$ instead of general polytopes $P$.
In fact, by proving the proposition for the standard simplex $T_n$ with vertices $e_0, e_1,\dotsc,e_n$, we immediately see that the statement holds true for general $n$-simplices $T$ by the following argument: There is an affine transformation $\alpha$ such that $\alpha(T)=T_n$.
We denote by $\alpha\#\Phi$ the  push-forward measure of $\Phi$ by $\alpha$, i.e., for all Borel measurable sets $A$ we have
\begin{equation*}
	\alpha\#\Phi(A)=\Phi(\alpha^{-1}(A)) = \int_A \frac{\phi(\alpha^{-1}(x))}{\left|\det \alpha\right|}\d{\lambda_n}(x).
\end{equation*}
We see that $\alpha\#\Psi(A)$ is absolutely continuous with respect to $\lambda_n$ and the density function is given by $\frac{\mathrm{d}(\alpha\#\Phi)}{\mathrm{d}\lambda_n}(x) = \frac{\phi(\alpha^{-1}(x))}{\left|\det\alpha\right|}$.
Set $\widetilde{\phi} := \frac{\phi\circ\alpha^{-1}}{\left|\det\alpha\right|}$ and $\widetilde{\psi}:= \frac{\psi\circ\alpha^{-1}}{\left|\det\alpha\right|}$. Then $\widetilde{\phi}$, $\widetilde{\psi}$ are positive and continuous functions on $\alpha(T)=T_n$ and
\begin{align}\label{eqn:affine_trans}
	\begin{split}
	\alpha(T_\delta^\phi) 
	&= \bigcap\left\{\alpha(T\cap H^-): \Phi(T\cap H^+)\leq \delta\right\}  \\
	&= \bigcap\left\{T_n\cap H^-: \alpha\#\Phi(T_n \cap H^+)\leq \delta\right\}
	 = \left(T_n\right)_{\delta}^{\widetilde{\phi}}.
	\end{split}
\end{align}
Once we prove Proposition \ref{thm:main_theorem_weighted_local} for the standard simplex $T_n$ with positive and continuous weight functions we may conclude
\begin{equation*}
		\lim_{\delta\to 0^+}\!\frac{ \Psi\left(S \setminus T_\delta^\phi\right)}{\delta \left(\ln\frac{1}{\delta}\right)^{n-1}}
		\!=\! \lim_{\delta\to 0^+}\! \frac{ \alpha\#\Psi\left(\alpha(S) \setminus (T_n)_{\delta}^{\widetilde{\phi}}\right)}{\delta \left(\ln\frac{1}{\delta}\right)^{n-1}}
		\!=\! \frac{1}{n!\,n^{n-1}} \frac{\widetilde{\psi}(\alpha(v))}{\widetilde{\phi}(\alpha(v))}
		\!=\! \frac{1}{n!\,n^{n-1}} \frac{\psi(v)}{\phi(v)},
\end{equation*}
and therefore Proposition \ref{thm:main_theorem_weighted_local} will also hold true for general $n$-simplices $T$.

\smallskip
For $\varepsilon\in(0,1]$ and $i=0,\dotsc,n$, we put 
\begin{equation*}
	A_\phi^+(\varepsilon T_n, \varepsilon e_i,\delta) := \bigcup\left\{\varepsilon T_n \cap \interior H^+ :\, \parbox[c]{0.4\textwidth}{$\Phi(\varepsilon T_n\cap H^+)\leq \delta$, $\varepsilon e_i\in\interior H^+$\\ and $\varepsilon e_j\not\in H^+$ for $j\neq i$}\right\}.
\end{equation*}
 For $\phi\equiv 1$ we obtain the same sets as in \eqref{eqn:def_a_eucl}.
Set again $c:=\min_{x\in T_n} \phi(x)$. Then
\begin{equation*}
	c\lambda_n(T_n\cap H^+) \leq \Phi(T_n\cap H^+)
\end{equation*}
and therefore
\begin{equation}\label{eqn:a_bound}
	A_\phi^+(T_n,e_i,\delta) \subset A^+(T_n,e_i,\delta/c).
\end{equation}
Furthermore, for $\varepsilon\in (0,1]$, we have
\begin{equation}\label{eqn:a_lower}
	A^+_{\phi}(T_n,e_0,\delta) \supset A^+_{\phi}(\varepsilon T_n,e_0,\delta).
\end{equation}
This holds as for any half-space $H^+$ such that $e_0\in\interior H^+$ and $\varepsilon e_i \not\in H^+$ for $i=1,\dotsc,e_n$ we have 
$H^+\cap T_n = H^+\cap \varepsilon T_n$ and  $e_i\not\in H^+$ for $i=1,\dotsc,n$.

We set
\begin{equation*}
	\phi_{\varepsilon}^-(e_0) := \min_{x\in \varepsilon T_n} \phi(x) \text{ and }
	\phi_{\varepsilon}^+(e_0) := \max_{x\in \varepsilon T_n} \phi(x).
\end{equation*}
Then
\begin{equation*}
	\phi_{\varepsilon}^-(e_0) \lambda_n(\varepsilon T_n\cap H^+) 
	\leq \Phi(\varepsilon T_n\cap H^+) 
	\leq \phi_{\varepsilon}^+(e_0) \lambda_n(\varepsilon T_n\cap H^+),
\end{equation*}
which yields
\begin{equation}\label{eqn:a_eps_bound}
	A^+(\varepsilon T_n,e_0,\delta/\phi_{\varepsilon}^+(e_0)) \subset A^+_{\phi}(\varepsilon T_n,e_0,\delta) \subset A^+(\varepsilon T_n,e_0,\delta/\phi_{\varepsilon}^-(e_0))
\end{equation}

We will also use the sets 
\begin{equation*}
	B_\phi^+(T_n,e_i,\varepsilon e_j,\delta) 
	:= \bigcup\left\{T_n\cap \interior H^+ :\, \text{$\Phi(T_n\cap H^+)\leq \delta$ and $e_i,\varepsilon e_j\in H^+$}\right\},
\end{equation*}
for $\varepsilon\in (0,1]$ and $i,j=0,\dotsc,n$, $i\neq j$. For $\phi\equiv 1$ and $\varepsilon=1$ we obtain the same sets as in \eqref{eqn:def_b_eucl}.
As with \eqref{eqn:a_bound}, we find
\begin{equation}\label{eqn:B_bound}
	B_\phi^+(T_n,e_i, e_j,\delta)\subset B^+(T_n,e_i,e_j,\delta/c),
\end{equation}
for $i,j=0,\dotsc,n$, $i\neq j$.
Also, for $i=1,\dotsc,n$, we have
\begin{equation}\label{eqn:B_eps_bound}
	(\varepsilon T_n)\cap B_\phi^+(T_n,e_0,\varepsilon e_i,\delta) \subset B^+(\varepsilon T_n,e_0,\varepsilon e_i,\delta/c),
\end{equation}
because if $H^+$ is a halfspace that contains the segment $[e_0,\varepsilon e_i]$ and satisfies $\Phi(H^+\cap T_n) \leq \delta$, then
\begin{equation*}
	\delta \geq \Phi(H^+\cap T_n) \geq \Phi(H^+\cap \varepsilon T_n) \geq c\lambda_n(H^+\cap \varepsilon T_n).
\end{equation*}

\smallskip
To establish Proposition \ref{thm:main_theorem_weighted_local} for $T_n$, we first prove the following upper bound. 
\begin{lemma}\label{lem:main_simplex_upper_weighted}
	Let $T_n$ be the standard simplex in $\R^n$ with vertices $e_0,\dotsc, e_n$ and let $\phi,\psi: T_n\to (0,\infty)$ be continuous.
	If $H_0^+$ is a closed half-space such that $e_0\in\interior H^+$ and $e_i\not\in H^+$ for $i=1,\dotsc,n$, then
	\begin{equation*}
		\limsup_{\delta\to 0^+} \frac{\Psi\left( (H_0^+ \cap T_n)\setminus (T_n)_{\delta}^\phi\right)}{\delta \left(\ln\frac{1}{\delta}\right)^{n-1}} \leq 
		\frac{1}{n^{n-1}} \frac{\psi(e_0)}{\phi(e_0)}.
	\end{equation*}
\end{lemma}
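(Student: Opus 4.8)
The plan is to reduce the weighted statement to the uniform result \eqref{eqn:lambda_A_upper} from Lemma \ref{lem:eucl_first} by sandwiching the weight $\phi$ between its extrema on small scaled copies $\varepsilon T_n$ of the simplex. First I would fix $\varepsilon\in(0,1]$ and split the region $H_0^+\cap T_n$ into the piece near $e_0$, namely $(\varepsilon T_n)\cap H_0^+$, and the outer piece $(H_0^+\cap T_n)\setminus \varepsilon T_n$. The outer piece stays a fixed positive distance away from $e_0$; since $H_0^+$ separates $e_0$ from all other vertices, any half-space $H^+$ with $\lambda_n$- (hence $\Phi$-) volume $\leq\delta$ that meets this outer region must cut off either $e_0$ together with a bounded-away portion (controlled by \eqref{eqn:lambda_A_upper2} after passing to $A^+(T_n,e_0,\delta/c)$ via \eqref{eqn:a_bound}) or at least two vertices (controlled by $B^+$ and \eqref{eqn:lambda_B_upper} via \eqref{eqn:B_bound}). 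So the outer piece contributes $0$ in the limit, uniformly in $\varepsilon$.

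For the inner piece, the key is that on $\varepsilon T_n$ we have $\phi^-_\varepsilon(e_0)\leq\phi\leq\phi^+_\varepsilon(e_0)$, so by \eqref{eqn:a_eps_bound} and the fact that $\psi\leq\psi^+_\varepsilon(e_0):=\max_{\varepsilon T_n}\psi$,
\begin{equation*}
	\Psi\bigl((\varepsilon T_n\cap H_0^+)\setminus (T_n)_\delta^\phi\bigr)
	\leq \psi^+_\varepsilon(e_0)\,\lambda_n\bigl(A^+(\varepsilon T_n,e_0,\delta/\phi^-_\varepsilon(e_0))\bigr) + (\text{$B^+$ terms}),
\end{equation*}
where I use that a half-space cutting off a cap of $\Phi$-volume $\leq\delta$ from $T_n$ either meets $\varepsilon T_n$ only through a genuine vertex-cap of $\varepsilon T_n$ or is a $B^+$-type cap, together with \eqref{eqn:a_lower} and \eqref{eqn:B_eps_bound}. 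Applying \eqref{eqn:lambda_A_upper} to the simplex $\varepsilon T_n$ (whose volume is $\varepsilon^n\lambda_n(T_n)$) and noting that the $\ln(1/\delta)$-leading term of $\lambda_n(A^+(\varepsilon T_n,e_0,\delta/\phi^-_\varepsilon(e_0)))/(\delta(\ln\frac1\delta)^{n-1})$ is $\dfrac{1}{n^{n-1}\,\phi^-_\varepsilon(e_0)}$ (the additive volume and scaling constants only shift lower-order logarithmic terms), I obtain
\begin{equation*}
	\limsup_{\delta\to0^+}\frac{\Psi\bigl((H_0^+\cap T_n)\setminus (T_n)_\delta^\phi\bigr)}{\delta(\ln\frac1\delta)^{n-1}} \leq \frac{1}{n^{n-1}}\,\frac{\psi^+_\varepsilon(e_0)}{\phi^-_\varepsilon(e_0)}.
\end{equation*}
Finally, letting $\varepsilon\to0^+$ and using continuity of $\phi,\psi$ at $e_0$, so that $\psi^+_\varepsilon(e_0)\to\psi(e_0)$ and $\phi^-_\varepsilon(e_0)\to\phi(e_0)$, yields the claimed bound.

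The main obstacle I expect is the bookkeeping for the inner piece: one must be careful that a half-space achieving $\Phi(T_n\cap H^+)\leq\delta$ and meeting $\varepsilon T_n$ is correctly classified — its trace on $\varepsilon T_n$ is either a vertex-cap of $\varepsilon T_n$ at $\varepsilon e_0$ (so that $\lambda_n$ there is governed by $A^+(\varepsilon T_n,e_0,\cdot)$ after comparing $\Phi$ with $\phi^-_\varepsilon(e_0)\lambda_n$) or it simultaneously contains $e_0$ and some other $\varepsilon e_i$, landing it in a $B^+$-set that is asymptotically negligible by \eqref{eqn:B_eps_bound} and \eqref{eqn:lambda_B_upper}. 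Keeping the error terms genuinely lower order (the extra additive $\delta(\ln\cdots)^{n-2}$ in \eqref{eqn:lambda_A_upper} and the $\varepsilon$-dependence of all the volume constants) uniformly as $\delta\to0$ before $\varepsilon\to0$ is the delicate point, but it is entirely analogous to the uniform estimates already in place in Lemma \ref{lem:eucl_first}.
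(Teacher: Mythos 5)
Your proposal is correct and follows essentially the same route as the paper: reduce to the uniform estimates of Lemma \ref{lem:eucl_first} by sandwiching $\phi$ and $\psi$ between their extrema on $\varepsilon T_n$, discard the $B^+$-type caps and the portion away from $e_0$ via \eqref{eqn:lambda_B_upper} and \eqref{eqn:lambda_A_upper2}, apply \eqref{eqn:lambda_A_upper} to the inner cap region, and let $\varepsilon\to 0^+$ using continuity at $e_0$. The paper organizes the decomposition by first classifying half-spaces into the sets $A_\phi^+$ and $B_\phi^+$ via \eqref{eqn:wfloat_split} and only then splitting $A_\phi^+(T_n,e_0,\delta)$ into inner, segment, and outer pieces, but this is only a bookkeeping difference from your region-first splitting.
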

\begin{proof}
	First note that, analogous to \eqref{eqn:float_split}, we have
	\begin{equation}\label{eqn:wfloat_split}
		T_n \setminus (T_n)_{\delta}^\phi = \bigcup_{i=0}^n  A_\phi^+(T_n,e_i,\delta) \cup \bigcup_{\substack{i,j=0\\i\neq j}}^n B_\phi^+(T_n,e_i,e_j,\delta).
	\end{equation}
	We set again $c:=\min_{x\in T_n} \phi(x)$.
	By \eqref{eqn:lambda_B_upper} and \eqref{eqn:B_bound}, 
	\begin{align}\label{eqn:b_weight}
		\begin{split}
		\limsup_{\delta\to 0^+}\frac{\Psi\big(H_0^+\cap B_\phi^+(T_n,e_i,e_j,\delta)\big)}{\delta\left(\ln\frac{1}{\delta}\right)^{n-1}} 
		\leq \limsup_{\delta\to 0^+}\frac{\Psi\big(B_\phi^+(T_n,e_i,e_j,\delta)\big)}{\delta\left(\ln\frac{1}{\delta}\right)^{n-1}}\\
		\leq \left(\max_{x\in P}\psi(x)\right) \limsup_{\delta\to 0^+}\frac{\lambda_n\big(B^+(T_n,e_i,e_j,\delta/c)\big)}{\delta\left(\ln\frac{1}{\delta}\right)^{n-1}}  
		= 0,
		\end{split}
	\end{align}
	for $i,j=0,\dotsc,n$, $i\neq j$.
	By \eqref{eqn:lambda_A_upper2} and \eqref{eqn:a_bound}
	\begin{equation}\label{eqn:a_weight}
		\limsup_{\delta \to 0^+}\! \frac{\Psi\big(\!H_0^+\!\cap\! A_\phi^+(T_n,e_i,\delta)\!\big)}{\delta\left(\ln\frac{1}{\delta}\right)^{n-1}}
		\!\leq\!\! \left(\!\max_{x\in P}\psi(x)\!\right) \limsup_{\delta\to 0^+}\!\frac{\lambda_n\big(\!H_0^+ \!\cap\! A^+(T_n,e_i,\frac{\delta}{c})\!\big)}{\delta\left(\ln\frac{1}{\delta}\right)^{n-1}} 
		\!=\!0,
	\end{equation}
	for $i=1,\dotsc,n$.
	By \eqref{eqn:wfloat_split}, \eqref{eqn:b_weight} and \eqref{eqn:a_weight} we conclude
	\begin{equation}\label{eqn:local_upper_bound}
		\limsup_{\delta\to 0^+} \frac{\Psi((H_0^+\cap T_n)\setminus (T_n)_\delta^{\phi})}{\delta\left(\ln\frac{1}{\delta}\right)^{n-1}} 
		\leq \limsup_{\delta\to 0^+} \frac{\Psi(H_0^+\cap A^+_\phi(T_n,e_0,\delta))}{\delta\left(\ln\frac{1}{\delta}\right)^{n-1}}.
	\end{equation}
	Now let $\varepsilon\in (0,1)$ be arbitrary and set
	\begin{align*}
		\phi^-_\varepsilon(e_0) &:= \min_{x\in \varepsilon T_n} \phi(x), &
		\phi^+_\varepsilon(e_0) &:= \max_{x\in \varepsilon T_n} \phi(x),\\
		\psi^-_\varepsilon(e_0) &:= \min_{x\in \varepsilon T_n} \psi(x), &
		\psi^+_\varepsilon(e_0) &:= \max_{x\in \varepsilon T_n} \psi(x).
	\end{align*}
	Then, for any Borel $A\subset \varepsilon T_n$, 
	\begin{align}\label{eqn:phi_eps}
		\begin{split}
		\phi^-_\varepsilon(e_0) \lambda_n(A) &\leq \Phi(A) \leq \phi^+_\varepsilon(e_0) \lambda_n(A),\\
		\psi^-_\varepsilon(e_0) \lambda_n(A) &\leq \Psi(A) \leq \psi^+_\varepsilon(e_0) \lambda_n(A).
		\end{split}
	\end{align}
	If $H^+$ is a hyperplane such that $e_0\in\interior H^+$, then either $\varepsilon e_i\not \in H^+$ for all $i\in\{1,\cdots n\}$, in which case $H^+\cap T_n=H^+\cap \varepsilon T_n$, or there is at least one $i\in\{1,\dotsc,n\}$ such that $[e_0,\varepsilon e_i]\in H^+$. Hence
	\begin{align*}
		(\varepsilon T_n) \cap A^+_\phi(T_n,e_0,\delta) 
		&\subset A^+_\phi(\varepsilon T_n, e_0,\delta) \cup \bigcup_{i=1}^n (\varepsilon T_n) \cap B^+_\phi(T_n,e_0,\varepsilon e_i,\delta)\\
		&\overset{\eqref{eqn:B_eps_bound}}{\subset} A^+_\phi(\varepsilon T_n, e_0,\delta) \cup \bigcup_{i=1}^n B^+(\varepsilon T_n, e_0, \varepsilon e_i, \delta/c).
	\end{align*}
	We therefore define the sets
	\begin{align*}
		A^+(\varepsilon) &:= A^+_\phi(\varepsilon T_n, e_0,\delta),\\
		B_i^+(\varepsilon) &:= B^+(\varepsilon T_n,e_0,\varepsilon e_i,\delta/c), \quad \text{for $i=1,\dotsc,n$,}\\
		C^+(\varepsilon) &:= A^+_\phi(T_n,e_0,\delta) \setminus (\varepsilon T_n).
	\end{align*}
	Then
	\begin{equation}\label{eqn:A_phi_cover}
		H_0^+\cap A^+_\phi(T_n,e_0,\delta)\subset A^+_\phi(T_n,e_0,\delta) \subset A^+(\varepsilon) \cup \bigcup_{i=1}^n B_i^+(\varepsilon) \cup C^+(\varepsilon).
	\end{equation}
	We have $A^+(\varepsilon)\subset \varepsilon T_n$ and therefore
	\begin{align*}
		\Psi(A^+(\varepsilon))
		&\overset{\eqref{eqn:phi_eps}}{\leq} \psi_\varepsilon^+(e_0)\, \lambda_n(A_\phi^+(\varepsilon T_n,e_0,\delta))\\
		&\overset{\eqref{eqn:a_eps_bound}}{\leq} \psi_\varepsilon^+(e_0)\, \lambda_n(A^+(\varepsilon T_n, e_0, \delta/\phi_\varepsilon^-(e_0)))\\
		&\overset{\eqref{eqn:a_lower}}{\leq} \psi_\varepsilon^+(e_0)\, \lambda_n(A^+(T_n, e_0, \delta/\phi_\varepsilon^-(e_0)))\\
		&\overset{\eqref{eqn:lambda_A_upper}}{\leq} \frac{\psi_\varepsilon^+(e_0)}{\phi_\varepsilon^-(e_0)}\,
			\left[ \frac{\delta}{n^{n-1}} \ln\left(\frac{n^n \phi_\varepsilon^-(e_0)}{n!\delta}\right)^{n-1} 
			+ n\delta \ln\left(\frac{\phi_\varepsilon^-(e_0)}{\delta}\right)^{n-2}\right].
	\end{align*}
	Hence
	\begin{equation}\label{eqn:A_eps}
		\limsup_{\delta\to 0^+} \frac{\Psi(A^+(\varepsilon))}{\delta \left(\ln\frac{1}{\delta}\right)^{n-1}} \leq \frac{1}{n^{n-1}} \frac{\psi_\varepsilon^+(e_0)}{\phi_\varepsilon^-(e_0)}.
	\end{equation}
	For $B_i^+(\varepsilon)$ we have
	\begin{equation}\label{eqn:B_eps}
		\limsup_{\delta\to 0^+} \frac{\Psi(B_i^+(\varepsilon))}{\delta\left(\ln\frac{1}{\delta}\right)^{n-1}} 
		\overset{\eqref{eqn:B_eps_bound}}{\leq} \left(\max_{x\in P}\psi(x)\right) \limsup_{\delta\to 0^+} 
			\frac{ \lambda_n(B^+(\varepsilon T_n,e_0,\varepsilon e_i,\delta/c))}{\delta\left(\ln\frac{1}{\delta}\right)^{n-1}} 
		\overset{\eqref{eqn:lambda_B_upper}}{=} 0.
	\end{equation}
	Finally, for $C^+(\varepsilon)$ we find
	\begin{equation*}
		C^+(\varepsilon) = A^+_\phi(T_n,e_0,\delta) \setminus (\varepsilon T_n) 
		\overset{\eqref{eqn:a_bound}}{\subset} A^+(T_n,e_0,\delta/c) \setminus (\varepsilon T_n),
	\end{equation*}
	which yields
	\begin{equation}\label{eqn:C_eps}
		\limsup_{\delta\to 0^+} \frac{\Psi(C^+(\varepsilon))}{\delta\left(\ln\frac{1}{\delta}\right)^{n-1}} 
		\leq \left(\max_{x\in P}\psi(x)\right) \limsup_{\delta\to 0^+} \frac{\lambda_n(A^+(T_n,e_0,\delta/c)\setminus (\varepsilon T_n))}{\delta\left(\ln\frac{1}{\delta}\right)^{n-1}}
		\overset{\eqref{eqn:lambda_A_upper2}}{=}0.
	\end{equation}
	Combining \eqref{eqn:A_eps}, \eqref{eqn:B_eps} and \eqref{eqn:C_eps} with \eqref{eqn:local_upper_bound} and \eqref{eqn:A_phi_cover}, we conclude
	\begin{equation*}
		\limsup_{\delta\to 0^+} \frac{\Psi\left((H_0^+\cap T_n)\setminus (T_n)_\delta^{\phi}\right)}{\delta\left(\ln\frac{1}{\delta}\right)^{n-1}} 
		\leq \frac{1}{n^{n-1}}\frac{\psi_\varepsilon^+(e_0)}{\phi_\varepsilon^-(e_0)}.
	\end{equation*}
	Since $\varepsilon\in(0,1)$ was arbitrary and since $\psi$ and $\phi$ are continuous in $e_0$, we have
	\begin{equation*}
		\limsup_{\delta\to 0^+} \frac{\Psi\left((H_0^+\cap T_n)\setminus (T_n)_\delta^{\phi}\right)}{\delta\left(\ln\frac{1}{\delta}\right)^{n-1}} 
		\leq \frac{1}{n^{n-1}}\liminf_{\varepsilon\to 0^+} \frac{\psi_\varepsilon^+(e_0)}{\phi_\varepsilon^-(e_0)} = \frac{1}{n^{n-1}}\frac{\psi(e_0)}{\phi(e_0)},
	\end{equation*}
	which concludes the proof.
\end{proof}

\begin{proof}[{Proof of Proposition \ref{thm:main_theorem_weighted_local}.}]
	Since $S$ is a flag simplex of $P$ with $v\in \vert P$ there is a uniquely determined flag $\mathbf{F}\in \flag_v(P)$ such that $S$ is associated to $\mathbf{F}$. First we show, that we may reduce the problem from $P$ to an $n$-dimensional simplex $T$.
	By Lemma \ref{lem:upper_simplex} there is an $n$-simplex $T_u$ such that
	\begin{enumerate}
		\item[i)] $P \subset T_u$,
		\item[ii)] $S$ is a flag simplex of $T_u$.
	\end{enumerate}
	By Urysohn's Lemma (or equivalently Tietze's extension theorem), we may choose a continuous extension $\tilde{\phi}:T_u\to (0,\infty)$ of $\phi$ from $P$ to $T_u$, i.e., we have $\tilde{\phi}(x)=\phi(x)$ for $x\in T_u\cap P$.
	Then $\Phi(H^+\cap P) \leq \widetilde{\Phi}(H^+\cap T_u)$ and therefore $P_{\delta}^\phi\subset (T_u)_{\delta}^{\widetilde{\phi}}$, which yields
	\begin{equation*}
		\liminf_{\delta\to 0^+} \frac{\Psi(S \setminus P_\delta^\phi)}{\delta\left(\ln\frac{1}{\delta}\right)^{n-1}} 
		\geq \liminf_{\delta\to 0^+} \frac{\Psi\left(S \setminus (T_u)_\delta^{\widetilde{\phi}}\right)}{\delta\left(\ln\frac{1}{\delta}\right)^{n-1}}.
	\end{equation*}
	Conversely, by Proposition \ref{prop:flag simplices} ii), there is a flag simplex
	$T_{\ell}$ of $P$ associated with $\mathbf{F}$ such that
	\begin{enumerate}
		\item[i)] $S\subset T_{\ell} \subset P$,
		\item[ii)] $S$ is a flag simplex of $T_{\ell}$.
	\end{enumerate}
	Then $\Phi(H^+\cap P) \geq \Phi(H^+\cap T_{\ell})$ and therefore $P_{\delta}^\phi\supset (T_{\ell})_{\delta}^{\phi}$, which yields
	\begin{equation*}
		\limsup_{\delta\to 0^+} \frac{\Psi(S \setminus P_\delta^\phi)}{\delta\left(\ln\frac{1}{\delta}\right)^{n-1}} 
		\leq\limsup_{\delta\to 0^+} \frac{\Psi(S\setminus (T_{\ell})_\delta^\phi)}{\delta\left(\ln\frac{1}{\delta}\right)^{n-1}}.
	\end{equation*}
	Hence, it is sufficient to prove the proposition for $n$-simplices $T$ and a associated flag simplex $S$, because then
	\begin{equation*}
		\lim_{\delta\to 0^+} \frac{\Psi(S\setminus (T_{\ell})_\delta^\phi)}{\delta\left(\ln\frac{1}{\delta}\right)^{n-1}}
		= \frac{1}{n!\,n^{n-1}} \frac{\psi(v)}{\phi(v)}
		= \lim_{\delta\to 0^+} \frac{\Psi\left(S \setminus (T_u)_\delta^{\widetilde{\phi}}\right)}{\delta\left(\ln\frac{1}{\delta}\right)^{n-1}},
	\end{equation*}
	where $v$ is the vertex of $P$ that is uniquely determined by $S$.
	
	In fact,  using an affine transformation and arguing as in \eqref{eqn:affine_trans},  we may further reduce the problem to the standard simplex $T_n$. So assume that $T=T_n$, 
	that $\phi,\psi:T_n\to (0,\infty)$ are continuous and that  $S$ is a flag simplex of $T_n$ with $v=e_0\in S$.  Proposition \ref{thm:main_theorem_weighted_local}  will follow once we show
	\begin{equation*}
		\lim_{\delta\to 0^+} \frac{\Psi(S \setminus (T_n)_\delta^\phi)}{\delta\left(\ln\frac{1}{\delta}\right)^{n-1}} 
		= \frac{1}{n!\,n^{n-1}} \frac{\psi(e_0)}{\phi(e_0)}.
	\end{equation*}

	First we prove a lower bound for an arbitrary $\varepsilon\in (0,1)$. With the same notation as in the proof of Lemma \ref{lem:main_simplex_upper_weighted},    
	we have
	\begin{equation*}
		T_n\setminus (T_n)_\delta^\phi 
		\overset{\eqref{eqn:wfloat_split}}{\supset} A^+_\phi(T_n,e_0,\delta) 
		\overset{\eqref{eqn:a_lower}}{\supset} A^+_\phi(\varepsilon T_n,e_0,\delta) 
		\overset{\eqref{eqn:a_eps_bound}}{\supset} A^+(\varepsilon T_n,e_0,\delta/\phi_\varepsilon^+(e_0)),
	\end{equation*}
	and by \eqref{eqn:phi_eps}, we conclude
	\begin{equation*}
		\Psi(S\setminus (T_n)_\delta^\phi) \geq \psi_\varepsilon^-(e_0) \, \lambda_n\left((S\cap A^+(\varepsilon T_n,e_0,\delta/\phi_\varepsilon^+(e_0))\right).
	\end{equation*}
	By \eqref{eqn:lambda_concentrate}, this yields
	\begin{equation*}
		\liminf_{\delta\to 0^+} \frac{\Psi(S\setminus (T_n)_\delta^\phi)}{\delta\left(\ln\frac{1}{\delta}\right)^{n-1}} 
		\geq \frac{1}{n!\,n^{n-1}} \frac{\psi_\varepsilon^-(e_0)}{\phi_\varepsilon^+(e_0)}.
	\end{equation*}
	Since $\varepsilon\in(0,1)$ was arbitrary, we obtain
	\begin{equation}\label{eqn:lower_bound}
		\liminf_{\delta\to 0^+} \frac{\Psi(S\setminus (T_n)_\delta^\phi)}{\delta\left(\ln\frac{1}{\delta}\right)^{n-1}} 
		\geq \frac{1}{n!\,n^{n-1}} \frac{\psi(e_0)}{\phi(e_0)}.
	\end{equation}
	
	To prove the upper bound we use a symmetrization of $\phi$ and $\psi$ in the following way:
	As in the proof of \eqref{eqn:lambda_concentrate} in Lemma \ref{lem:eucl_first} we consider the set $\Sigma(n)$ of all permutations $\sigma$ of the integers $\{1,\dotsc,n\}$.  For $\sigma\in \Sigma(n)$ there is an orthogonal transformation $\alpha_\sigma$ such that $\alpha_\sigma(e_i) = e_{\sigma(i)}$. In particular $\alpha_\sigma(T_n)=T_n$ and $\alpha_\sigma(e_0) = e_0$. 
	We define $\widetilde{\phi}, \widetilde{\psi}:T_n\to (0,\infty)$ by
	\begin{align*}
		\widetilde{\phi}(x) &:= \min_{\sigma\in \Sigma(n)} \phi(\alpha_\sigma(x)),&
		\widetilde{\psi}(x) &:= \max_{\sigma\in \Sigma(n)} \psi(\alpha_\sigma(x)).
	\end{align*}
	Then $\widetilde{\phi}$ and $\widetilde{\psi}$ are continuous and 
	\begin{enumerate}
		\item[i)] $\widetilde{\phi}\leq \phi$ and $\psi\leq \widetilde{\psi}$,
		\item[ii)] for all $\sigma \in \Sigma(n)$, $\widetilde{\phi}\circ\alpha_\sigma = \widetilde{\phi}$ and $\widetilde{\psi}\circ\alpha_\sigma = \widetilde{\psi}$, and
		\item[iii)] $\widetilde{\phi}(e_0) =\phi(e_0)$ and $\widetilde{\psi}(e_0)=\psi(e_0)$.
	\end{enumerate}
	We denote by $S_{\id}$ the simplex that has vertices $0,z_1,\dotsc,z_n$, where
	\begin{equation*}
		z_i := \frac{1}{i} \sum_{k=1}^i e_k,\quad \text{for $i=1,\dotsc,n$,}
	\end{equation*}
	and we set $S_\sigma := \alpha_{\sigma}(S_{\id})$, for $\sigma \in \Sigma(n)$.
	Then $(S_\sigma)_{\sigma\in\Sigma(n)}$ dissects $T_n$, i.e., $\interior S_\sigma\cap \interior S_{\sigma'} = \emptyset$ if $\sigma\neq \sigma'$ and
	\begin{equation*}
		T_n = \bigcup_{\sigma\in \Sigma(n)} S_{\sigma}.
	\end{equation*}
	Since $S$ is a flag simplex of $T_n$ with $e_0\in S$, we may assume that $S$ determines the flag $\mathbf{F}=(F_0,\dotsc,F_{n-1})$ where
	\begin{equation*}
		F_i := [e_0,\dotsc,e_i], \quad \text{for $i=0,\dotsc,n-1$.}
	\end{equation*}
	Now fix $\varepsilon\in (0,1)$.
	We have
	\begin{align}\label{eqn:cover}
	\begin{split}
		S\setminus (T_n)_\delta^\phi
		&\subset \left[(\varepsilon S_{\id})\setminus (T_n)_\delta^\phi\right]\\
		&\quad \uplus \left[(\varepsilon T_n)\cap (S\setminus (T_n)_\delta^\phi)\setminus (\varepsilon S_{\id})\right]\\
		&\quad \uplus \left[(S\setminus (T_n)_\delta^\phi)\setminus (\varepsilon T_n)\right].
	\end{split}
	\end{align}
	Since $S$ and $\varepsilon S_{\id}$ are flag simplices of $T_n$ associated with the same flag, we conclude with Theorem \ref{thm:main_theorem_weighted_upper} that
	\begin{equation*}
		\limsup_{\delta\to 0^+}\! \frac{\Psi\big((\varepsilon T_n)\cap (S\setminus (T_n)_\delta^\phi)\setminus (\varepsilon S_{\id})\big)}
			{\delta\left(\ln\frac{1}{\delta}\right)^{n-1}}\\
		\!\leq\! \limsup_{\delta\to 0^+} \frac{\Psi\big((S\setminus (T_n)_\delta^\phi)\setminus (\varepsilon S_{\id})\big)}
			{\delta\left(\ln\frac{1}{\delta}\right)^{n-1}} 
		\!=\! 0,
	\end{equation*}
	and
	\begin{equation*}
		\limsup_{\delta\to 0^+} \frac{\Psi\big((S\setminus (T_n)_\delta^\phi)\setminus (\varepsilon T_n)\big)}
			{\delta\left(\ln\frac{1}{\delta}\right)^{n-1}}
		\leq \limsup_{\delta\to 0^+} \frac{\Psi\big((S\setminus (T_n)_\delta^\phi)\setminus (\varepsilon S_{\id})\big)}
			{\delta\left(\ln\frac{1}{\delta}\right)^{n-1}} 
		=0.
	\end{equation*}
	Thus, \eqref{eqn:cover} gives 
	\begin{equation}\label{eqn:upper}
		\limsup_{\delta\to 0^+} \frac{\Psi(S\setminus (T_n)_\delta^\phi)}{\delta\left(\ln\frac{1}{\delta}\right)^{n-1}} 
		\leq \limsup_{\delta\to 0^+} \frac{\Psi((\varepsilon S_{\id})\setminus (T_n)_\delta^\phi)}{\delta\left(\ln\frac{1}{\delta}\right)^{n-1}}.
	\end{equation}
	Finally, since $\widetilde{\phi}\leq \phi$ and $\psi\leq \widetilde{\psi}$, we have
	\begin{multline}\label{eqn:upper2}
		\Psi((\varepsilon S_{\id})\setminus (T_n)_\delta^\phi) 
		\leq \widetilde{\Psi}((\varepsilon S_{\id})\setminus (T_n)_\delta^{\widetilde{\phi}}) \\
		= \frac{1}{|\Sigma(n)|} \sum_{\sigma\in \Sigma(n)} \widetilde{\Psi}((\varepsilon S_\sigma)\setminus (T_n)_\delta^{\widetilde{\phi}})
		= \frac{1}{n!} \widetilde{\Psi}((\varepsilon T_n)\setminus (T_n)_\delta^{\widetilde{\phi}}).
	\end{multline}
	By Lemma \ref{lem:main_simplex_upper_weighted}, this yields 
	\begin{align*}
		\limsup_{\delta\to 0^+} \frac{\Psi(S\setminus (T_n)_\delta^\phi)}{\delta\left(\ln\frac{1}{\delta}\right)^{n-1}} 
		&\overset{\eqref{eqn:upper}}{\leq} \limsup_{\delta\to 0^+} \frac{\Psi((\varepsilon S_{\id})\setminus (T_n)_\delta^\phi)}{\delta\left(\ln\frac{1}{\delta}\right)^{n-1}} \\
		&\overset{\eqref{eqn:upper2}}{\leq} \frac{1}{n!} \limsup_{\delta\to 0^+} \frac{\widetilde{\Psi}((\varepsilon T_n)\setminus (T_n)_\delta^{\widetilde{\phi}})}{\delta\left(\ln\frac{1}{\delta}\right)^{n-1}}\\
		&\overset{\phantom{(4.19}}{\leq} \frac{1}{n!\,n^{n-1}} \frac{\widetilde{\psi}(e_0)}{\widetilde{\phi}(e_0)} 
		= \frac{1}{n!\,n^{n-1}} \frac{\psi(e_0)}{\phi(e_0)}.
	\end{align*}
	This, together with \eqref{eqn:lower_bound}, implies
	\begin{equation*}
		\lim_{\delta\to 0^+} \frac{\Psi(S\setminus (T_n)_\delta^\phi)}{\delta\left(\ln\frac{1}{\delta}\right)^{n-1}} 
		= \frac{1}{n!\,n^{n-1}} \frac{\psi(e_0)}{\phi(e_0)},
	\end{equation*}
	which concludes the proof.
\end{proof}

The proof of our main Theorem \ref{thm:main_theorem} follows by combining the concentration in flag simplices, Proposition \ref{thm:main_theorem_weighted_upper}, with the limit theorem for flag simplices, Proposition \ref{thm:main_theorem_weighted_local}.
\begin{proof}[{Proof of Theorem \ref{thm:main_theorem}.}]
	Let $(S(\mathbf{F}))_{\mathbf{F}\in \flag(P)}$ be a family flag simplices of $P$ such that $S(\mathbf{F})$ is associated with $\mathbf{F}$. By Lemma \ref{lem:disjoint} we may further assume that $\interior S(\mathbf{F})\cap \interior S(\mathbf{F}')=\emptyset$, if $\mathbf{F}\neq \mathbf{F}'$.
	Then, by Proposition \ref{thm:main_theorem_weighted_local},
	\begin{equation}\label{eqn:main_proof1}
		\liminf_{\delta\to 0^+} \frac{\Psi\big(\!P\setminus P_\delta^\phi\!\big)}{\delta\left(\ln\frac{1}{\delta}\right)^{n-1}}
		\!\geq\! \sum_{\mathbf{F}\in\flag(P)}\!\! \liminf_{\delta\to 0^+}\! \frac{\Psi\big(\!S(\mathbf{F})\setminus P_\delta^\phi\!\big)}{\delta\left(\ln\frac{1}{\delta}\right)^{n-1}}
		\!=\!\frac{1}{n!\,n^{n-1}} \sum_{\mathbf{F}\in \flag(P)} \frac{\psi(v_{\mathbf{F}})}{\phi(v_{\mathbf{F}})},
	\end{equation}
	where $v_{\mathbf{F}}\in\vert P$ is the vertex associated with $\mathbf{F}$.
	Also
	\begin{multline*}
		\limsup_{\delta\to 0^+} \frac{\Psi\left(P\setminus P_\delta^\phi\right)}{\delta\left(\ln\frac{1}{\delta}\right)^{n-1}} 
		\leq \sum_{\mathbf{F}\in \flag(P)} 
			\limsup_{\delta\to 0^+} \frac{\Psi\left(S(\mathbf{F})\setminus P_\delta^\phi\right)}{\delta\left(\ln\frac{1}{\delta}\right)^{n-1}}\\
		+ \limsup_{\delta\to 0^+} \frac{\Psi\big((P\setminus P_\delta^\phi)\setminus \bigcup\{S(\mathbf{F}):\mathbf{F}\in\flag(P)\}\big)}{\delta\left(\ln\frac{1}{\delta}\right)^{n-1}}.
	\end{multline*}
	By Proposition \ref{thm:main_theorem_weighted_upper}, we have
	\begin{equation*}
		\limsup_{\delta\to 0^+} \frac{\Psi\big((P\setminus P_\delta^\phi)\setminus \bigcup\{S(\mathbf{F}):\mathbf{F}\in\flag(P)\}\big)}{\delta\left(\ln\frac{1}{\delta}\right)^{n-1}}
		= 0,
	\end{equation*}
	and therefore, again by Proposition \ref{thm:main_theorem_weighted_local}, we conclude
	\begin{equation}\label{eqn:main_proof2}
		\limsup_{\delta\to 0^+} \frac{\Psi\left(P\setminus P_\delta^\phi\right)}{\delta\left(\ln\frac{1}{\delta}\right)^{n-1}}  
		\leq  \frac{1}{n!\,n^{n-1}}  \sum_{\mathbf{F}\in \flag(P)} \frac{\psi(v_{\mathbf{F}})}{\phi(v_{\mathbf{F}})}.
	\end{equation}
	Now \eqref{eqn:main_proof1} and \eqref{eqn:main_proof2} yield
	\begin{equation*}
		\lim_{\delta\to 0^+} \frac{\Psi\left(P\setminus P_\delta^\phi\right)}{\delta\left(\ln\frac{1}{\delta}\right)^{n-1}}
		= \sum_{v\in\vert P} \frac{\psi(v)}{\phi(v)} \frac{\left|\flag_v(P)\right|}{n!\,n^{n-1}}. \qedhere
	\end{equation*}
\end{proof}

\section{Applications}
We now apply our main Theorem \ref{thm:main_theorem} to spherical and hyperbolic polytopes.
As a general reference for spherical and hyperbolic spaces and in particular convex polytopes in these spaces we refer to \cite[Ch.~6.3]{Ratcliffe:2006}.

\subsection{Spherical polytopes}
Let $\S^n$ be the Euclidean unit sphere in $\R^{n+1}$. 
A proper spherical polytope $P$ is the convex hull of a finite set of vertices contained in an open halfsphere.
Equivalently, $P\subset \S^n$ is a proper spherical convex polytope, if and only if $\operatorname{pos} P:=\{\lambda v : \lambda\geq 0, v\in P\}$ is a closed convex polyhedral cone that contains no line through the origin, i.e., is pointed.

The floating body of a spherical polytope $P$ is defined by
\begin{equation}\label{eqn:def_sphere_float}
	P_\delta^s := \bigcap \big\{H^+\cap P : \vol_n^s(H^-\cap P)\leq \delta\big\}
\end{equation}
where $\vol_n^s$ is the spherical Lebesgue measure and $H^{\pm}$ are the closed halfspheres bounded by the hypersphere $H$ (cf.\ \cite{BesWer:2015}).

Fix $e\in \S^n$. Given a proper spherical convex polytope $P$,  we may assume without loss of generality that $P\subset \S^+_e := \{v\in \S^n: v\cdot e>0\}$. The central projection $g:\S^+_e\to T_e\S^n$ of $\S^+_e$ to the tangent hyperplane $T_e\S^n\cong \R^n$ of $\S^n$ in $e$ is also called \emph{gnomonic projection},
\begin{equation*}
	g(x) = \frac{x}{x\cdot e}-e,
\end{equation*}
(cf.\ \cite{BesSchu:2016}). It maps proper spherical convex polytopes in $\S^+_e$ to Euclidean convex polytopes in $\R^n$. Furthermore, the push-forward of the spherical Lebesgue measure restricted to $\S^+_e$ is absolutely continuous with respect to the Lebesgue measure on $\R^n$ and the density is given by
\begin{equation*}
	\phi^s(x)= (1+\|x\|^2)^{-\frac{n+1}{2}},\quad \forall x\in\R^n.
\end{equation*}
That is, for a  Borel set $A\subset \S^+_e$, we have $\vol_n^s(A)=\int_{g(A)} \phi^s(x)\d{x}$.

Finally, the spherical floating body $P_\delta^s$ is mapped to the weighted floating body $g(P)_\delta^{\phi^s}$ and therefore we may apply our main Theorem \ref{thm:main_theorem} to obtain the following
\begin{theorem}
	Let $P$ be an $n$-dimensional spherical convex polytope in $\S^n$ that is contained in an open halfsphere. Then
	\begin{equation*}
		\lim_{\delta\to 0^+} \frac{\vol_n^s(P)-\vol_n^s(P^s_\delta)}{\delta \left(\ln\frac{1}{\delta}\right)^{n-1}} = \frac{\left|\flag(P)\right|}{n!\,n^{n-1}}.
	\end{equation*}
\end{theorem}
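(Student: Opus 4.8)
The plan is to transport the statement to $\R^n$ via the gnomonic projection and then invoke Corollary~\ref{cor:main_theorem}, exactly as anticipated in the paragraph preceding the theorem. First, since $P$ is a proper spherical polytope it lies in some open halfsphere, and because spherical volume and spherical floating bodies are invariant under rotations of $\S^n$, after such a rotation we may assume $P\subset\S^+_e$ for the fixed direction $e$. Let $g\colon\S^+_e\to T_e\S^n\cong\R^n$ be the gnomonic projection. Being the restriction to $\S^+_e$ of the central projection from the origin of $\R^{n+1}$, $g$ is a projective homeomorphism onto $\R^n$; hence $Q:=g(P)$ is an $n$-dimensional Euclidean convex polytope, and $g$ carries supporting great subspheres of $P$ to supporting affine hyperplanes of $Q$, inducing an isomorphism of face lattices $\face(P)\to\face(Q)$. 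In particular $|\flag(Q)|=|\flag(P)|$.

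Next I would record the two measure-theoretic facts that make the reduction work. The pushforward under $g$ of the spherical Lebesgue measure restricted to $\S^+_e$ is the absolutely continuous measure on $\R^n$ with the positive continuous density $\phi^s(x)=(1+\|x\|^2)^{-(n+1)/2}$; writing $\Phi^s$ for this measure, one has $\vol_n^s(A)=\Phi^s(g(A))$ for every Borel $A\subset\S^+_e$, so in particular $\vol_n^s(P)=\Phi^s(Q)$. Moreover a hypersphere $H\subset\S^n$ is $\S^n\cap L$ for a linear hyperplane $L\subset\R^{n+1}$, and when $L\cap\S^+_e\neq\emptyset$ the map $g$ sends $L\cap\S^+_e$ onto an affine hyperplane $h\subset\R^n$ and the two closed hemispheres $H^\pm$ onto the two closed halfspaces $h^\pm$; conversely every affine hyperplane of $\R^n$ arises this way. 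Combining this with the density identity, $g$ takes the family $\{H^+\cap P:\vol_n^s(H^-\cap P)\le\delta\}$ appearing in \eqref{eqn:def_sphere_float} onto the family $\{h^+\cap Q:\Phi^s(h^-\cap Q)\le\delta\}$ defining $Q_\delta^{\phi^s}$ through \eqref{eqn:float_def_mu}, for every $\delta>0$ small enough that no great subsphere cuts off spherical volume $\le\delta$ from $P$. Hence $g(P_\delta^s)=Q_\delta^{\phi^s}$ and therefore $\vol_n^s(P_\delta^s)=\Phi^s\big(Q_\delta^{\phi^s}\big)$.

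Finally, applying Corollary~\ref{cor:main_theorem} to the $n$-polytope $Q$ with the continuous positive weight $\phi^s$ gives
\[
	\lim_{\delta\to 0^+}\frac{\Phi^s(Q)-\Phi^s\big(Q_\delta^{\phi^s}\big)}{\delta\left(\ln\frac1\delta\right)^{n-1}}=\frac{|\flag(Q)|}{n!\,n^{n-1}},
\]
and substituting $\Phi^s(Q)=\vol_n^s(P)$, $\Phi^s\big(Q_\delta^{\phi^s}\big)=\vol_n^s(P_\delta^s)$ and $|\flag(Q)|=|\flag(P)|$ yields the theorem. There is no genuinely hard step here; the only point that requires care is the compatibility identity $g(P_\delta^s)=Q_\delta^{\phi^s}$, where one must check that the great subspheres $H$ with $H\cap\S^+_e=\emptyset$ — whose cuts $H^-\cap P$ are either empty or all of $P$ — can be discarded for small $\delta$, so that the two defining families of caps correspond exactly under $g$.
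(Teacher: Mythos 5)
Your proposal is correct and follows exactly the paper's own route: reduce to $\R^n$ via the gnomonic projection $g$, use that the push-forward of spherical Lebesgue measure has the positive continuous density $\phi^s(x)=(1+\|x\|^2)^{-\frac{n+1}{2}}$ and that $g(P_\delta^s)=g(P)_\delta^{\phi^s}$, and then apply Corollary \ref{cor:main_theorem}. You in fact spell out more carefully than the paper the correspondence of caps and the irrelevance of great subspheres missing $\S^+_e$, which is a welcome addition rather than a deviation.
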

More generally $P\subset \S^n$ may be called a spherical polytope (or spherical polyhedra) if either
\begin{enumerate}
	\item[i)] $P$ is contained in an open halfsphere (proper spherical polytope), or
	\item[ii)]  $P$ is a great $k$-sphere of $\S^n$, or
	\item[iii)] $P$ is a $k$-lune, that is, $P$ is the convex hull of a great $k$-sphere $L$ and a proper spherical polytope $Q\subset L^\circ$.
\end{enumerate}
Here $L^\circ$ is the great $(n-k-1)$-sphere that is polar to $L$ (compare also \cite[Thm.\ 6.3.16]{Ratcliffe:2006}).
If $P$ is a $k$-sphere, then $\vol_n^s(P)=0$ and therefore we cannot define a spherical floating body of $P$. However, for $k$-lunes $P$ the Definition \eqref{eqn:def_sphere_float} still applies and we may consider the volume difference between $P$ and $P_\delta^s$.

For instance, $(n-1)$-lunes are closed halfspheres and for the halfsphere $\S^+_e$ the spherical floating body of $\S_e^+$ is a geodesic ball with center $e$ and radius
\begin{equation*}
	\alpha = \frac{\pi}{2}-\frac{2\pi\delta}{\vol_n^s(\S^n)}.
\end{equation*}
To see this, we just note that for the intersection of two closed halfspheres $\S_{e}^+$ and $\S_{v}^+$, we have 
\begin{equation*}
	\vol_n^s(\S_{e}^+\cap \S_{v}^+) = \frac{\vol_n^s(\S^n)}{2} \left(1- \frac{d(e,v)}{\pi}\right),
\end{equation*}
where $d(e,v)$ is the angle between $e$ and $v$.
Hence 
\begin{equation*}
	\vol_n^s(\S^+_e) - \vol_n^s((\S^+_e)_\delta^s) 
	\!=\! \vol_{n-1}^s(\S^{n-1}) \int_{\alpha}^{\frac{\pi}{2}} (\sin t)^{n-1} \d{t}
	\!=\! \frac{\vol_{n-1}^s(\S^{n-1})}{\vol_{n}^s(\S^n)} 2\pi \delta + o(\delta),
\end{equation*}
as $\delta\to 0^+$.
We conclude
\begin{equation*}
	\lim_{\delta \to 0^+} \frac{\vol_n^s(\S^+_e) - \vol_n^s((\S^+_e)_\delta^s)}{\delta} 
	= \frac{2\pi\vol_{n-1}^s(\S^{n-1})}{\vol_n^s(\S^n)} = 2\sqrt{\pi}\,\frac{\Gamma(\frac{n+1}{2})}{\Gamma(\frac{n}{2})}.
\end{equation*}
Thus the volume difference between an $(n-1)$-lune and its floating body is of order $\delta$. This is particularly interesting, since no Euclidean convex body has a similar asymptotic.
It would be very interesting to determine the asymptotic behavior of $k$-lunes for $k\in\{0,\dotsc,n-2\}$.

In \cite{BHRS:2017},  the expected volume difference of a closed halfsphere and a uniform random polytope inside the closed halfsphere were considered 
and similar phenomena were observed.
\subsection{Hyperbolic polytopes}

Let $\H^n$ be the hyperbolic $n$-space and denote by $\vol_n^h$ the natural volume measure.
It is a curiosity of hyperbolic space that geodesically convex closed subsets may be unbounded, but still have finite volume. Hence, concerning convex polytopes in hyperbolic space, we should distinguish between polytopes with finite volume and the subset of compact convex polytopes.
The (geodesically) convex hull of a finite number of points such that the convex hull has non-empty interior  is an $n$-polytopes in $\H^n$ (cf.\ \cite[Thm.~6.3.17]{Ratcliffe:2006}). Hence, an $n$-polytope in $\H^n$ is always bounded.

In the projective model of hyperbolic space,  $n$-polytopes in hyperbolic space can be identified with Euclidean $n$-polytopes in the open unit ball. To be more precise, the projective model is the metric space $(\B^n,d_h)$, where $\B^n\subset \R^n$ is the open unit ball and the hyperbolic distance $d_h$ between two points $x,y\in\B^n$ is defined by
\begin{equation*}
	d_h(x,y) = \frac{1}{2}\ln\left( \frac{\|y-p\|}{\|x-p\|}\frac{\|x-q\|}{\|y-q\|}\right),
\end{equation*}
where $p,q$ are the intersection points of the line spanned by $x$ and $y$ with the ``sphere at infinity'' $\S^{n-1}=\bd\B^n$ and the four points $p,x,y,q$ are in that order on the line. Also, $\|{\cdot}\|$ is just the standard Euclidean norm.
	
Geodesics in the projective model $(\B^n,d_h)$ are  straight lines of $\R^n$ intersected with $\B^n$. 
Therefore, the geodesically convex hull of a finite number of points in $\B^n$ is the same as the Euclidean convex hull. 
Hence any $n$-polytope of $\H^n$ can be identified with a polytope $P\in\mathcal{K}(\R^n)$ with $P\subset \B^n$.
The hyperbolic volume in $(\B^n,d_h)$ is absolutely continuous to $\lambda_n$ and the density is given by
\begin{equation*}
	\phi^h(x) := \frac{\d{\vol_n^h}}{\d\lambda_n}(x) = (1-\|x\|^2)^{-\frac{n+1}{2}}, \quad  \forall x\in \B^n,
\end{equation*}
(cf.\ \cite[(3.7)]{BesWer:2016}). 
The hyperbolic floating body in $(\B^n,d_h)$ is $P_{\delta}^h = P_{\delta}^{\phi^h}$.  See also \cite[Sec.~4]{BesWer:2016}). 
Thus, by applying our main Theorem \ref{thm:main_theorem} with $\phi=\psi=\phi^h$ in $(B^n,d_h)$ we conclude
\begin{theorem}\label{thm:main_theorem_hyperbolic}
	Let $P$ be an $n$-dimensional compact geodesically convex polytope in $\H^n$ and let
	\begin{equation*}
		P_\delta^h := \bigcap \big\{ H^- : \vol_n^h(P\cap H^+)\leq \delta\big\},
	\end{equation*}
	be the hyperbolic floating body of $P$ for $\delta\in(0,\frac{1}{2}\vol_n^h(P))$. 
	Here $H^{\pm}$ are the closed half-spaces that are bounded by the totally-geodesic hypersurface $H$.
	Then
	\begin{equation*}
		\lim_{\delta\to 0^+} \frac{\vol_n^h(P)-\vol_n^h(P^h_\delta)}{\delta \left(\ln\frac{1}{\delta}\right)^{n-1}} = \frac{\left|\flag(P)\right|}{n!\,n^{n-1}}.
	\end{equation*}
\end{theorem}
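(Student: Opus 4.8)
The plan is to pass to the projective (Beltrami--Klein) model $(\B^n,d_h)$ of hyperbolic $n$-space and then apply Corollary~\ref{cor:main_theorem}. The key feature of this model is that totally geodesic $k$-dimensional submanifolds are exactly the intersections with $\B^n$ of $k$-dimensional affine subspaces of $\R^n$; in particular the geodesically convex hull of a finite point set in $\B^n$ coincides with its Euclidean convex hull. Hence a compact geodesically convex $n$-polytope $P$ of $\H^n$ is identified with an ordinary Euclidean polytope $P\in\mathcal{P}(\R^n)$ whose closure lies in the open ball $\B^n$, and compactness yields some $r\in(0,1)$ with $P\subset r\B^n$. Consequently the hyperbolic volume density $\phi^h(x)=(1-\|x\|^2)^{-(n+1)/2}$ restricts to a continuous function on $P$ that is bounded away from $0$ and $\infty$, so $\phi^h$ is an admissible weight in the sense of Section~2, and the weighted measure $\Phi^h$ with density $\phi^h$ satisfies $\Phi^h(A)=\vol_n^h(A)$ for every Borel $A\subset P$. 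Since the model identification is a homeomorphism preserving convex hulls, the hyperbolic polytope and its Euclidean image have the same face lattice, so $\left|\flag(P)\right|$ is unambiguous.

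The first substantial step is to check that $P_\delta^h=P_\delta^{\phi^h}$. A totally geodesic hypersurface $H$ of $\H^n$ meeting $P$ is precisely the trace on $\B^n$ of a Euclidean affine hyperplane, and the two closed hyperbolic half-spaces $H^{\pm}$ it bounds are the traces of the corresponding two closed Euclidean half-spaces; conversely every Euclidean hyperplane meeting $P$ (hence $\B^n$) arises this way. Moreover $\vol_n^h(P\cap H^+)=\Phi^h(P\cap H^+)$. Therefore the family of admissible cutting half-spaces and the volume constraint in the definition of $P_\delta^h$ coincide with those in \eqref{eqn:float_def_mu} for the weight $\phi=\phi^h$, so
\[
	P_\delta^h=\bigcap\{H^-:\vol_n^h(P\cap H^+)\le\delta\}=\bigcap\{H^-:\Phi^h(P\cap H^+)\le\delta\}=P_\delta^{\phi^h},
\]
for $\delta$ in the stated range, where the bound $\delta<\frac{1}{2}\vol_n^h(P)$ guarantees that no admissible half-space contains $P$ and hence that all caps involved are proper.

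It then remains only to invoke Corollary~\ref{cor:main_theorem} for the Euclidean polytope $P$ with the continuous positive weight $\phi=\phi^h$, which gives
\[
	\lim_{\delta\to 0^+}\frac{\Phi^h(P)-\Phi^h(P_\delta^{\phi^h})}{\delta\left(\ln\frac{1}{\delta}\right)^{n-1}}=\frac{\left|\flag(P)\right|}{n!\,n^{n-1}}.
\]
Substituting $\Phi^h(P)=\vol_n^h(P)$ and $\Phi^h(P_\delta^{\phi^h})=\vol_n^h(P_\delta^h)$ yields the theorem.

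The only point requiring genuine (if routine) care is the geometry of the Beltrami--Klein model used in the second step: that hyperbolic totally geodesic hyperplanes and half-spaces are literally Euclidean ones inside the ball, so that ``cutting a hyperbolic cap of volume $\delta$'' is the same operation as ``cutting a Euclidean cap of $\Phi^h$-measure $\delta$''. Everything after that is an immediate application of Corollary~\ref{cor:main_theorem}. The spherical analogue, Theorem~\ref{cor:main_theorem_sphere}, is obtained in exactly the same manner via the gnomonic projection, replacing $\phi^h$ by $\phi^s(x)=(1+\|x\|^2)^{-(n+1)/2}$ on the image polytope $g(P)$.
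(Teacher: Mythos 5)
Your proof is correct and follows essentially the same route as the paper: pass to the projective (Beltrami--Klein) model, identify the hyperbolic polytope and its floating body with a Euclidean polytope and its weighted floating body for the density $\phi^h(x)=(1-\|x\|^2)^{-(n+1)/2}$, and apply Corollary \ref{cor:main_theorem} (equivalently Theorem \ref{thm:main_theorem} with $\phi=\psi=\phi^h$). Your explicit observation that compactness forces $P\subset r\B^n$ for some $r<1$, so that $\phi^h$ is continuous and bounded on $P$, is a detail the paper leaves implicit but is exactly the right justification.
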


In \cite[p.\ 226]{Ratcliffe:2006} a generalized polytope $P$ in $(\B^n,d_h)$ is defined as the convex hull of a finite set of points in $\B^n\cup \S^{n-1}$.
A generalized polytope has finite volume, but may contain vertices at infinity and is therefore in general not bounded.
We call a generalized polytope ideal if all vertices of $P$ are at infinity, i.e., contained in $\S^{n-1}$.
We expect ideal polytopes to behave differently than compact polytopes. So far,   we can verify this in the hyperbolic plane.

\begin{theorem}\label{thm:ideal_triangle}
	For an ideal triangle $T$ in $(\B^2,d_h)$, there are $\delta_0>0$ and positive constants $C_1,C_2$ such that
	\begin{equation*}
		C_1\leq \frac{\vol_2^h(T)-\vol_2^h(T_\delta^h)}{\delta} \leq C_2,
	\end{equation*}
	for all $0<\delta<\delta_0$.
\end{theorem}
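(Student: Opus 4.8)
The plan is to pass to the projective model $(\B^2,d_h)$ of the hyperbolic plane. As recalled in Section 5.2, there the ideal triangle $T$ is a Euclidean triangle inscribed in the unit disk with its three vertices $v_1,v_2,v_3$ on $\S^1$, the totally geodesic cuts are ordinary Euclidean lines, and $\vol_2^h$ is the measure with density $\phi^h(x)=(1-\|x\|^2)^{-3/2}$, so $T^h_\delta=T^{\phi^h}_\delta$. Hence $\vol_2^h(T)-\vol_2^h(T^h_\delta)=\vol_2^h(T\setminus T^h_\delta)$, and $T\setminus T^h_\delta$ is the union of all Euclidean halfplane caps of $T$ of $\vol_2^h$-measure at most $\delta$, so the assertion amounts to two-sided bounds for this quantity. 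Theorem \ref{thm:main_theorem} does not apply, because $\phi^h$ is unbounded near each $v_i$; the content of the statement is that these ``vertices at infinity'' nevertheless contribute only order $\delta$, not the $\delta\ln(1/\delta)$ produced by the vertices of a compact polygon. The basic local estimate I would use is this: in coordinates at a vertex $v$ with $v$ at the origin and $s$ the distance to the tangent line of $\S^1$ at $v$, one has $\phi^h(x)\asymp s^{-3/2}$ for $x\in T$ near $v$, while $T$ is locally a Euclidean wedge of positive opening angle; consequently $s\asymp\operatorname{dist}(x,v)$ on that wedge and $\vol_2^h(T\cap B(v,\rho))\asymp\sqrt\rho$ for small $\rho$.

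For the lower bound it suffices to exhibit one family of caps. Cutting $T$ by the line $\{s=s_0\}$ at $v_1$, with $s_0=c_0\delta^2$: the resulting cap has section of length $\asymp s$ at height $s$, so its $\vol_2^h$-measure is $\asymp\int_0^{s_0}s\cdot s^{-3/2}\,ds=2\sqrt{s_0}\asymp\delta$; choosing the constant $c_0$ small enough makes this measure at most $\delta$, so the cap lies in $T\setminus T^h_\delta$, and its $\vol_2^h$-measure is at least $C_1\delta$.

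For the upper bound the key claim is that there is a constant $A=A(T)$ such that every cap $T\cap H^+$ of $\vol_2^h$-measure at most $\delta$ satisfies $\operatorname{dist}(y,\bd T)\le A\delta^2$ for every $y$ in it. Granting the claim, $T\setminus T^h_\delta$ lies in the $O(\delta^2)$-tube about $\bd T$, whose $\vol_2^h$-measure is $O(\delta)$: along the relative interior of each edge $\phi^h$ is bounded, contributing $O(\delta^2)$, and near a vertex $v$ the tube lies in $\{\xi_1\le A'\delta^2\}\cup\{\xi_2\le A'\delta^2\}$ in the wedge coordinates $x=\xi_1\vec u_1+\xi_2\vec u_2$ at $v$, whose $\phi^h$-measure is $\lesssim\int_0^{A'\delta^2}\xi_1^{-1/2}\,d\xi_1=O(\delta)$ since $\phi^h(x)\asymp(\xi_1+\xi_2)^{-3/2}$. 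To prove the claim I would argue by the combinatorial type of the cap. A cap of positive measure contains a vertex on its cut-off side; if it contains two vertices, or a vertex at distance bounded below from the given point $y$, then its measure is bounded below (it contains a full wedge $T\cap B(v,\text{const})$, or almost all of $T$), so for small $\delta$ it is not admissible. The admissible caps are therefore thin — either a corner triangle $[v,q_1,q_2]$ at a vertex $v$ (with $q_1,q_2$ on the two edges at $v$), which contains $T\cap B(v,c\min(\|v-q_1\|,\|v-q_2\|))$ and so has measure $\gtrsim\sqrt{\min(\|v-q_1\|,\|v-q_2\|)}$, or a sliver $\{z\in T:\operatorname{dist}(z,E)\le h\}$ along an edge $E$, which near each endpoint $v$ of $E$ contains $T\cap B(v,ch)$ and so has measure $\gtrsim\sqrt h$. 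In either case measure $\le\delta$ forces the relevant scale to be $O(\delta^2)$, and one then checks directly that every point of the cap lies within $O(\delta^2)$ of $\bd T$. Altogether $\vol_2^h(T\setminus T^h_\delta)=O(\delta)$.

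The main obstacle is this upper-bound claim: one must control, uniformly in $\delta$, the long thin caps which for a compact Euclidean polygon are responsible for the extra factor $\ln(1/\delta)$. Here the weight $\phi^h\asymp(\operatorname{dist}\text{ to }\S^1)^{-3/2}$ penalises caps so strongly near the vertices that the hyperbola-shaped cap region $\{\xi_1\xi_2\lesssim\delta\}$ of the Euclidean computation collapses to the $O(\delta^2)$-tube about $\bd T$, which, although long, still carries weighted measure only $O(\delta)$. Turning the case analysis — especially the sliver estimate $\vol_2^h(\{\operatorname{dist}(\cdot,E)\le h\})\asymp\sqrt h$ and the ``far vertex'' bounds — into clean lemmas is the technical heart; the lower bound and the remaining steps are routine.
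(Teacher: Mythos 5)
Your proposal is correct and follows the same overall strategy as the paper: the lower bound comes from an explicit vertex cap, and the upper bound from trapping $T\setminus T_\delta^h$ in an $O(\delta^2)$-collar of $\bd T$ and integrating the weight $\phi^h\asymp\operatorname{dist}(\cdot,\S^1)^{-3/2}$ over that collar, the $\sqrt{h}$-asymptotics of vertex caps being precisely what turns the Euclidean $\delta\ln\frac1\delta$ into $\delta$. The implementations of the key containment differ. The paper first uses that all ideal triangles are congruent to reduce to the regular inscribed triangle, computes that the vertex cap of hyperbolic area $\delta$ has Euclidean height $h_0(\delta)\le\tfrac32\delta^2$, and then invokes the standard polygonal floating-body argument: any line meeting the scaled copy $(1-2h_0(\delta))T$ cuts off one of the three vertex caps, hence $(1-3\delta^2)T\subset T_\delta^h$, and the annulus is bounded by three explicit edge-sliver integrals (giving $C_1=3$, $C_2=12$). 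You instead classify the admissible caps directly (corner triangles at one vertex versus quadrilateral slivers containing an edge) and show each lies in the $O(\delta^2)$-tube; this works, applies to a general inscribed triangle without the congruence reduction, but costs you the case analysis. One sentence in that analysis is false as written: a cap containing two vertices does \emph{not} have measure bounded below --- the thin sliver along an edge is exactly such a cap, with measure $\asymp\sqrt h$ --- but your own subsequent (correct) treatment of the sliver case supersedes that remark, so the argument as a whole stands.
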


\begin{figure}
	\centering
	\begin{tikzpicture}[scale=0.50]
		\path[use as bounding box] (-6.3,-6.1) rectangle (6.3,6.5);
		\node at (0,0) {\includegraphics[width=0.44\textwidth]{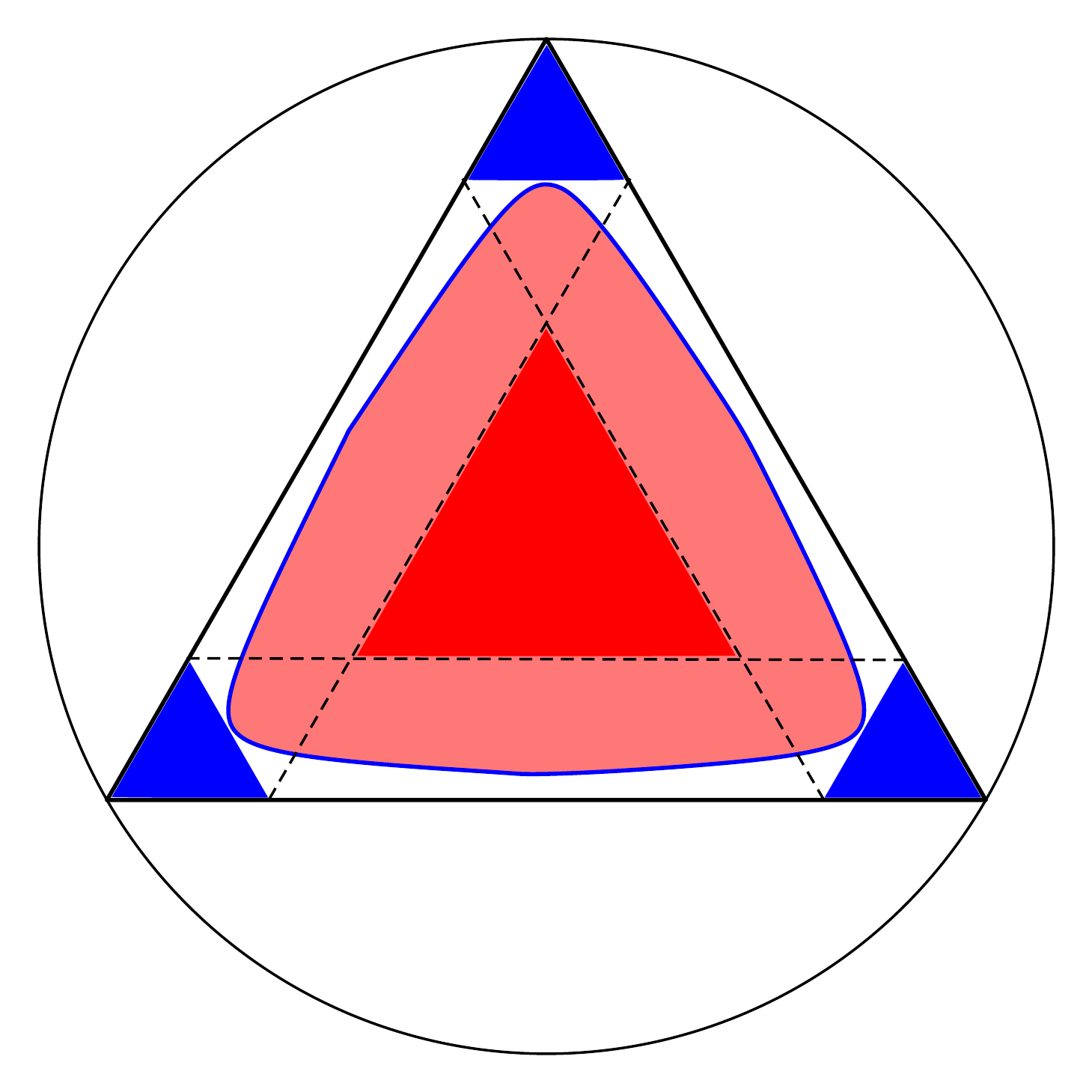}};
		\node[] at (0,0) {$T(\delta)$};
		\node[] at (1.9,0.5) {$T_\delta^h$};
		\node at (3,1.3) {$T$};
		\node at (0, 6.2) {$v_0$};
		\node at (210:6.2) {$v_1$};
		\node at (-30:6.2) {$v_2$};
	\end{tikzpicture}
	\caption{Constructions used in the proof of Theorem \ref{thm:ideal_triangle}. $T$ is the Euclidean regular triangle which we identify with the ideal hyperbolic triangles. $T_\delta^h$ is the hyperbolic floating body and $T(\delta)$ is a scaled triangle that is contained in $T_\delta^h$. Each of the blue colored caps $T\cap H^+(v_i,1-h_0(\delta))$,  $i=0,1,2$, cut off hyperbolic area $\delta$ from $T$ and touch the floating body $T_\delta^h$.}
	\label{fig:hyperbolic_triangle}
\end{figure}

\begin{proof}
	All ideal triangles in the hyperbolic plane are congruent, and therefore we may consider $T$ to be the Euclidean regular triangle spanned by the vertices
	$v_0 = (1,0)$, $v_1=(-\frac{1}{2},\frac{\sqrt{3}}{2})$, and $v_2=(-\frac{1}{2},-\frac{\sqrt{3}}{2})$.
	
	First, we establish the lower bound. Observe that $T\setminus T^h_\delta$ contains the caps $\biguplus_{i=0}^2 T\cap H^+(v_i,1-h(\delta))$, where $h_0(\delta)$ is the height determined by
	\begin{equation*}
		\delta = \vol_2^h\big(T\cap H^+(v_0,1-h_0(\delta))\big).
	\end{equation*}
	The caps $T\cap H^+(v_i,1-h_0(\delta))$, $i=0,1,2$, are all congruent by a rotation of $\pm \frac{2\pi}{3}$.
	We conclude
	\begin{equation*}
		\vol_2^h(T)-\vol_2^h(T_\delta^h) \geq \sum_{i=0}^2 \vol_2^h(T\cap H^+(v_i,1-h_0(\delta)) = 3\delta.
	\end{equation*}
	
	For the upper bound, let,  as above, $h_0(\delta)$  be the height of the cap $T\cap H^+(v_0,1-h_0(\delta))$ that has hyperbolic area $\delta$. The scaled triangle $T(\delta) := (1-2h_0(\delta))T$ is contained in the floating body $T_\delta^h$, because any half-plane $H^+$ that intersects $T(\delta)$ will contain at least one of the caps $T\cap H^+(v_i,1-h_0(\delta))$ and therefore $\vol_2^h(T\cap H^+)\geq \vol_2^h(T\cap H^+(v_i,1-h_0(\delta)))=\delta$ (see Figure \ref{fig:hyperbolic_triangle}). Thus $T(\delta)\subset T_\delta^h$. 
	Since
	\begin{align*}
		\vol_2^h\big(T\cap H^+(v_0,1-h_0(\delta))\big) 
		&= \int_{1-h_0(\delta)}^1 \int_{-\frac{1-x}{\sqrt{3}}}^{\frac{1-x}{\sqrt{3}}} \frac{1}{(1-x^2-y^2)^{\frac{3}{2}}}\d{y}\d{x}\\
		&= \int_{1-h_0(\delta)}^1 \frac{1}{(1-x^2)} \left.\left(\frac{y}{\sqrt{1-y^2}}\right)\right|_{y=-\frac{1}{\sqrt{3}} \sqrt{\frac{1-x}{1+x}}}^{\frac{1}{\sqrt{3}} \sqrt{\frac{1-x}{1+x}}}\d{x}\\
		&= \int_{0}^{h_0(\delta)} \frac{\sqrt{2}}{(2-s)\sqrt{s}\sqrt{3-2s}}\d{s} \geq \sqrt{\frac{2h_0(\delta)}{3}},
	\end{align*}
	we conclude $h_0(\delta) \leq \frac{3}{2} \delta^2$. Thus
	$T_\delta^h \supset T(\delta) \supset (1-3\delta^2) T$,
	which yields
	\begin{equation*}
		\vol_2^h(T)-\vol_2^h(T_\delta^h) \leq \vol_2^h(T) - \vol_2^h\left((1-3\delta^2) T\right).
	\end{equation*}
	The hyperbolic area of this annulus is bounded above by three times the hyperbolic area of the cap $T\cap H^+(n,\frac{1-3\delta^2}{2})$, where $n=-v_0=(-1,0)$ is the normal vector of an edge of $T$, that is,
	\begin{equation*}
		\vol_2^h(T) - \vol_2^h\left((1-3\delta^2) T\right) \leq 3 \vol_2^h\left(T\cap H^+\left(n,\frac{1-3\delta^2}{2}\right)\right).
	\end{equation*}
	We calculate
	\begin{align*}
		\vol_2^h\left(T\cap H^+\left(n,\frac{1-3\delta^2}{2}\right)\right) 
		&= \int_{-\frac{1}{2}}^{-\frac{1}{2}+\frac{3}{2}\delta^2} \int_{-\frac{1-x}{\sqrt{3}}}^{\frac{1-x}{\sqrt{3}}} \frac{1}{(1-x^2-y^2)^{\frac{3}{2}}} \d{y}\d{x}\\
		&= \int_{0}^{\frac{3}{2}\delta^2} \frac{1}{(\frac{1}{2}+s)\sqrt{s}\sqrt{\frac{3}{2}-s}} \d{s} 
		\leq 4\delta + 4\delta^2,
	\end{align*}
	for all $\delta\in\left(0,\frac{1}{2}\right)$, and conclude
	\begin{equation*}
		\limsup_{\delta\to 0^+} \frac{\vol_2^h(T)-\vol_2^h(T_\delta^h)}{\delta} \leq 12.\qedhere
	\end{equation*}
\end{proof}

From Theorem \ref{thm:ideal_triangle} it is clear that $\lim_{\delta\to 0^+} \frac{\vol_2^h(T)-\vol_2^h(T_\delta^h)}{\delta}$ exists. 
Refining the arguments, one may conclude, that for any ideal polygon $P$ of $(\B^2,d_h)$ the limit 
\begin{equation*}
	\lim_{\delta\to 0^+} \frac{\vol_2^h(P)-\vol_2^h(P_\delta^h)}{\delta}
\end{equation*}
exists. However, calculating the exact value appears to be difficult.
	
Note that for a regular ideal polygon $P$ the envelope $E^h_{[\delta]}$ of all lines that cut off hyperbolic area  $\delta$ from $P$ touches the vertices at infinity of $P$ if $\delta$ is small enough.  See Figure \ref{fig:hyperbolic_triangle_envelope} for sketches of an ideal triangle and an ideal regular pentagon in the projective model and in the Poincar{\'e} model of the hyperbolic plane. The reason for this behavior is, that the hyperbolic area of caps that cut off one vertex and caps that cut off one edge are asymptotic to $\sqrt{h_\delta}$, where $h_\delta$ is the height of the cap. This was observed for the ideal triangle in the proof of Theorem \ref{thm:ideal_triangle} and holds true for general ideal polygons. This behavior is of course vastly different from what we experience in Euclidean space, where the Euclidean area of a cap that cuts off a vertex is asymptotic to $h_\delta^2$ and a cap that cuts off an edge is asymptotic to $h_\delta$.

\begin{remark}[Floating body of regular ideal polygons]
	The floating body of ideal regular polygons has a remarkable behavior, as can already be inferred from Figure \ref{fig:hyperbolic_octagon} for the ideal regular octagon. For instance, unlike the Euclidean floating body which is always strictly convex \cite{SchuettWerner:1994}, the hyperbolic floating body of an ideal regular polygon is a hyperbolic polygon for special values of $\delta$. Apparently the behavior of hyperbolic floating bodies of polytopes has not been studied yet, and it seems that the hyperbolic floating body of ideal regular $n$-polytopes has very special role in this family and is therefore of particular interest.
\end{remark}

\bigskip
\footnotesize
\noindent\textit{Acknowledgments.}
F.~Besau is partly supported by DFG grant BE 2484/5-2 (Proj.-Nr.: 215529441). \\E.~M.~Werner is partly supported by NSF grant DMS-1504701.


\begin{figure}[p]
	\centering
	\begin{tikzpicture}[scale=0.33]
		\path[use as bounding box] (-10,-10) rectangle (30,10);
		\node at (0,0) {\includegraphics[width=0.45\textwidth]{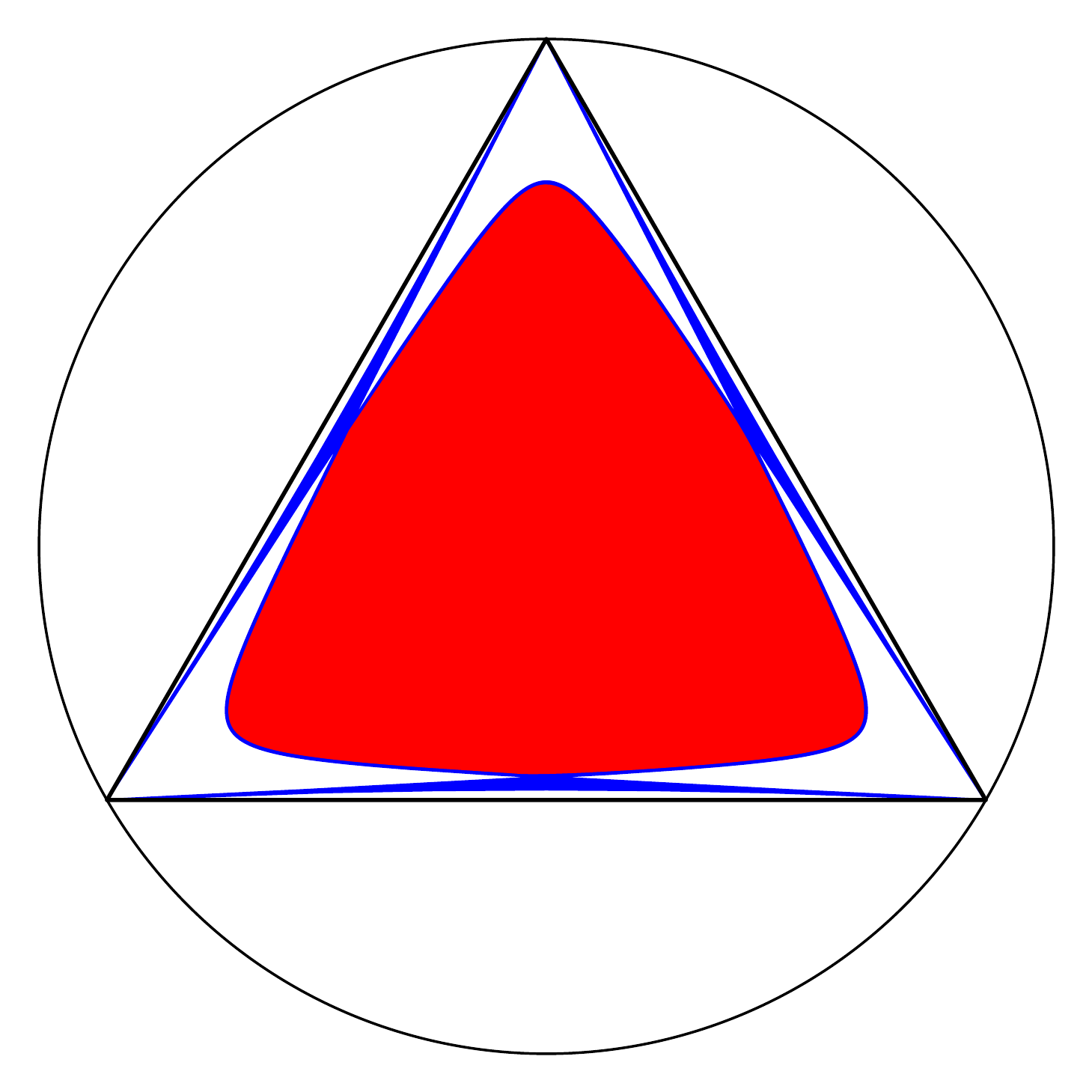}};
		\node at (20,0) {\includegraphics[width=0.45\textwidth]{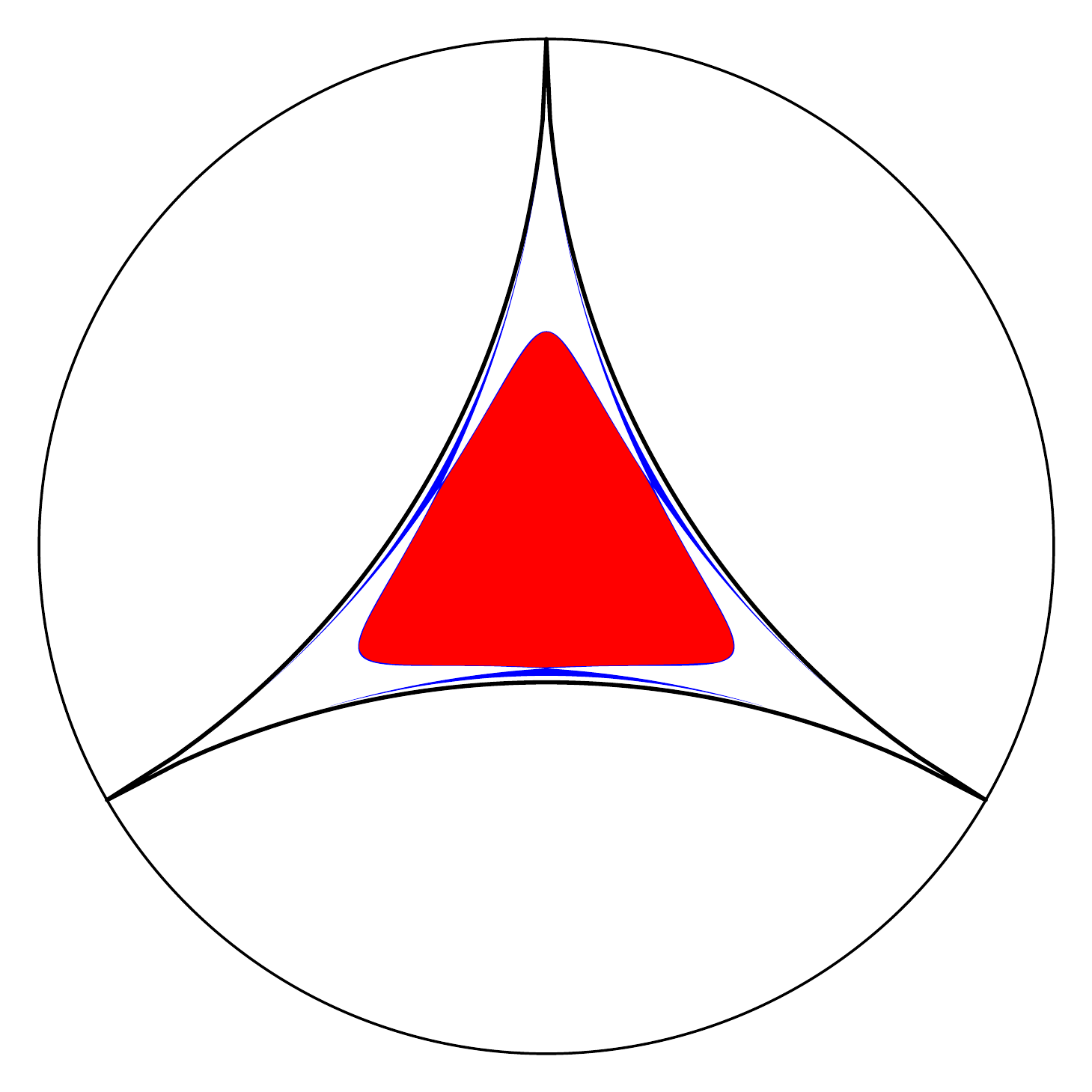}};
		\node[] at (0,0) {$T_\delta^h$};
		\node[] at (20,0) {$T_\delta^h$};
		\node at (6,2.3) {$T$};
		\node at (24,1.3) {$T$};
		\node[] at (-7.5,-0.5) {$E^h(\delta)$};
		\node[] at (16,2.5) {$E^h(\delta)$};
	\end{tikzpicture}\\
	\begin{tikzpicture}[scale=0.33]
		\path[use as bounding box] (-10,-10) rectangle (30,10);
		\node at (0,0) {\includegraphics[width=0.45\textwidth]{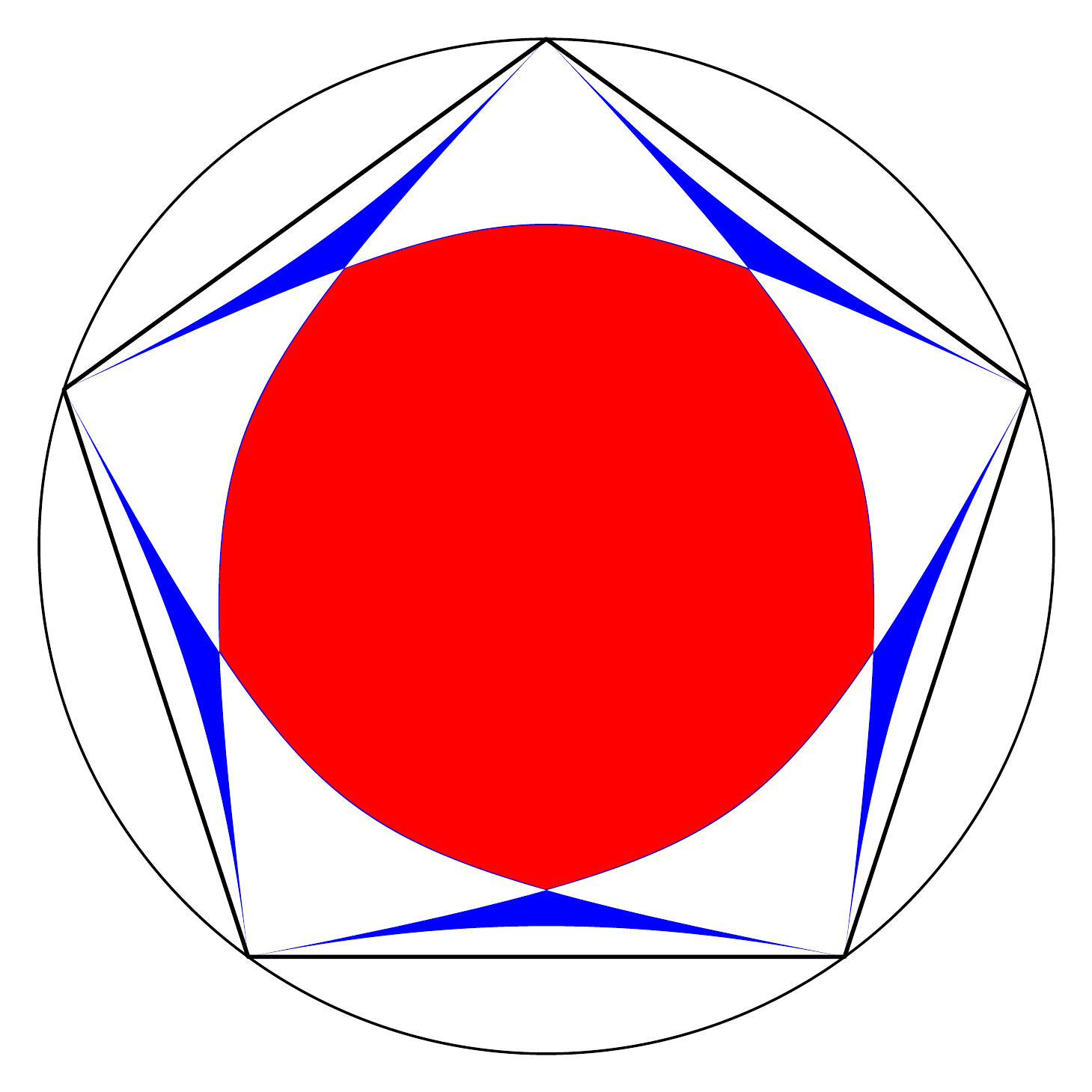}};
		\node at (20,0) {\includegraphics[width=0.45\textwidth]{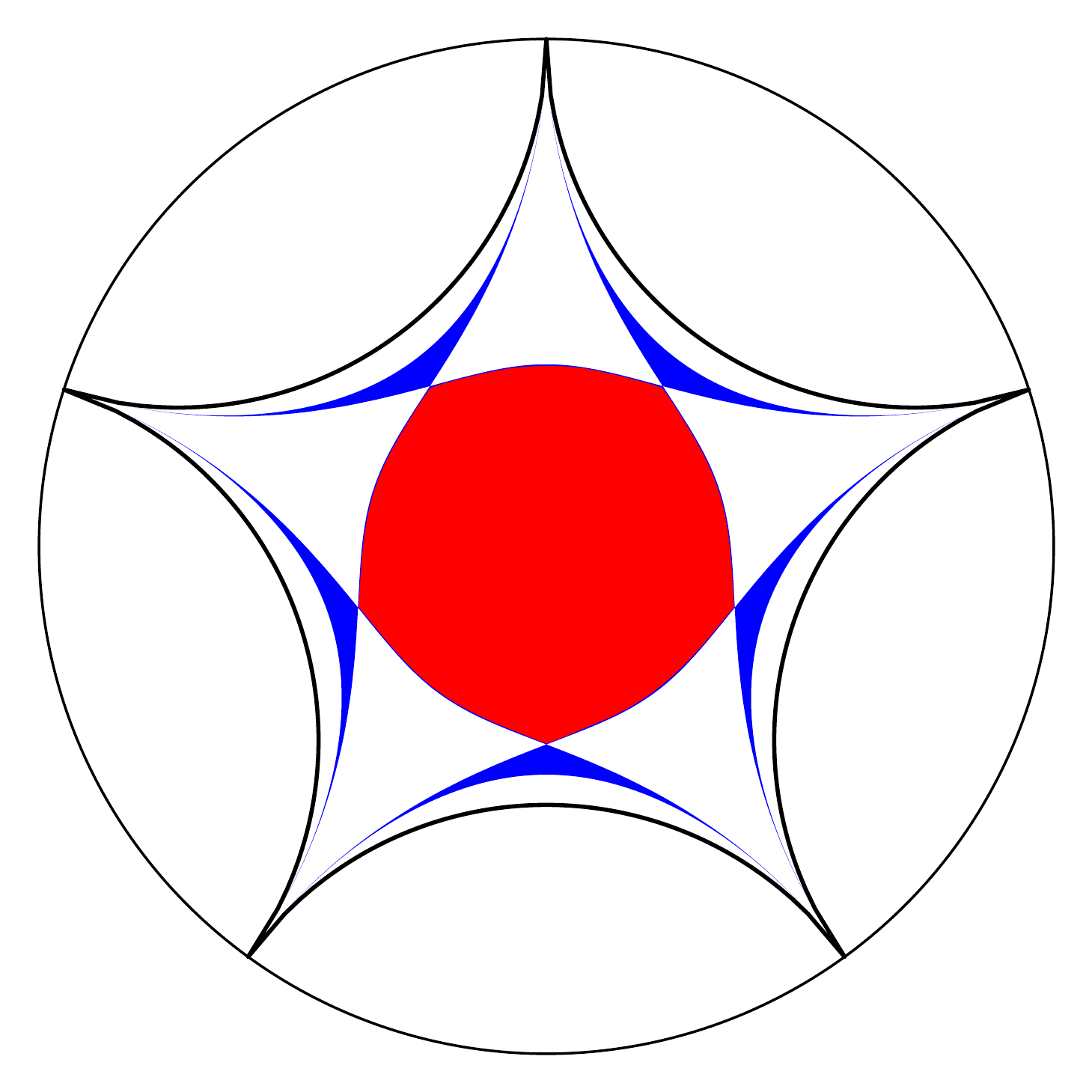}};
		\node[] at (0,0) {$P_\delta^h$};
		\node[] at (20,0) {$P_\delta^h$};
		\node at (6,6) {$P$};
		\node at (24,4.3) {$P$};
		\node[] at (-4,-6.5) {$E^h(\delta)$};
		\node[] at (16,5) {$E^h(\delta)$};
	\end{tikzpicture}
	\caption{The figures show in blue the envelope $E^h(\delta)$ in the projective model (on the left) and in the Poincar{\'e} disk model (on the right) of the hyperbolic plane. In red we see the convex hyperbolic floating body $T_\delta^h$ of the ideal triangle, respectively, the floating body $P_\delta^h$ of the ideal regular pentagon $P$.}
	\label{fig:hyperbolic_triangle_envelope}
\end{figure}

\begin{figure}[p]
	\centering
	\hfill
	\begin{tikzpicture}[scale=0.29]
		\path[use as bounding box] (-10,-10) rectangle (10,10);
		\node at (0,0) {\includegraphics[width=0.44\textwidth]{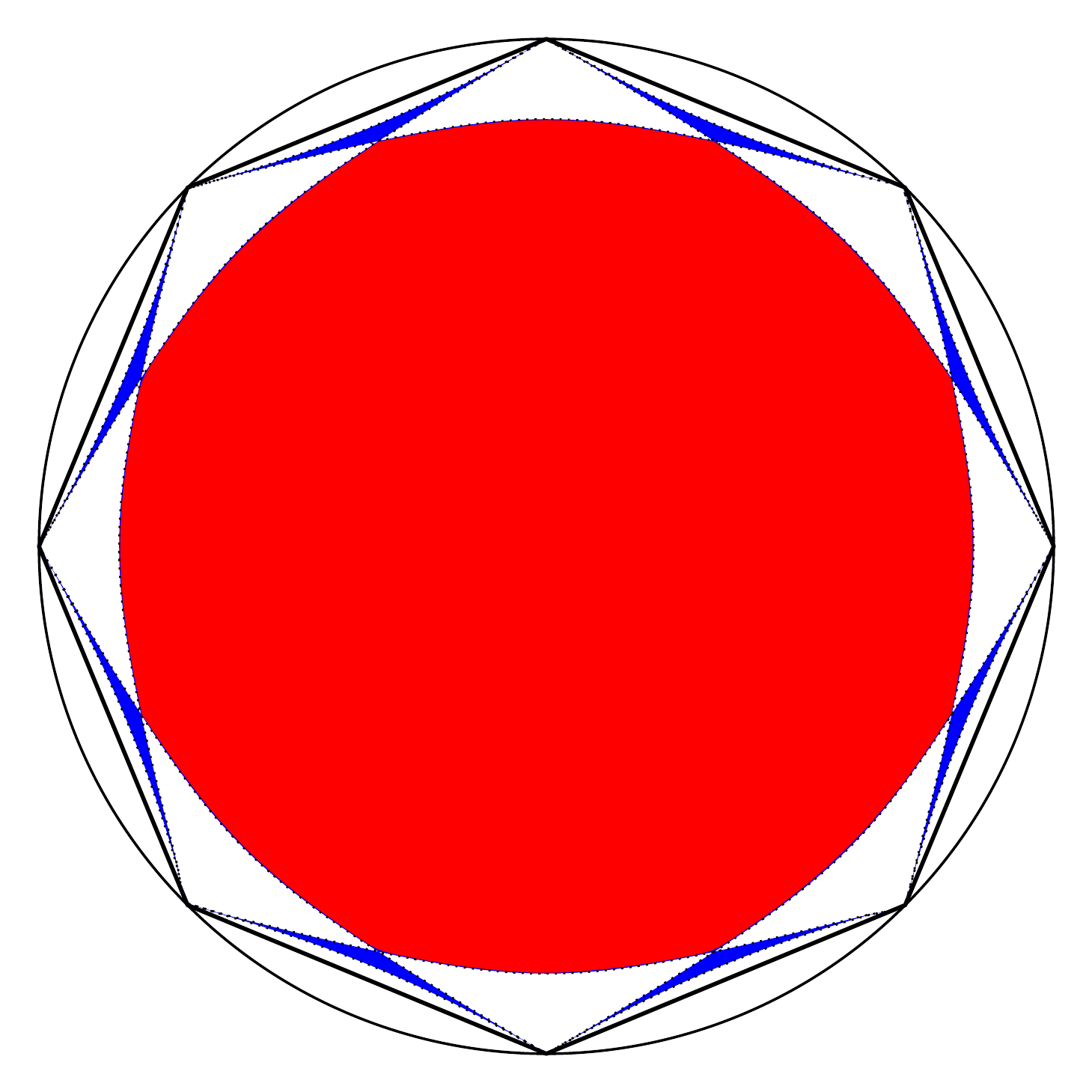}};
		\node[] at (0,0) {$O_{\pi/2}^h$};
		\node[] at (-4,-6.5) {$E^h(\pi/2)$};
	\end{tikzpicture}\hfill
	\begin{tikzpicture}[scale=0.29]
		\path[use as bounding box] (-10,-10) rectangle (10,10);
		\node at (0,0) {\includegraphics[width=0.44\textwidth]{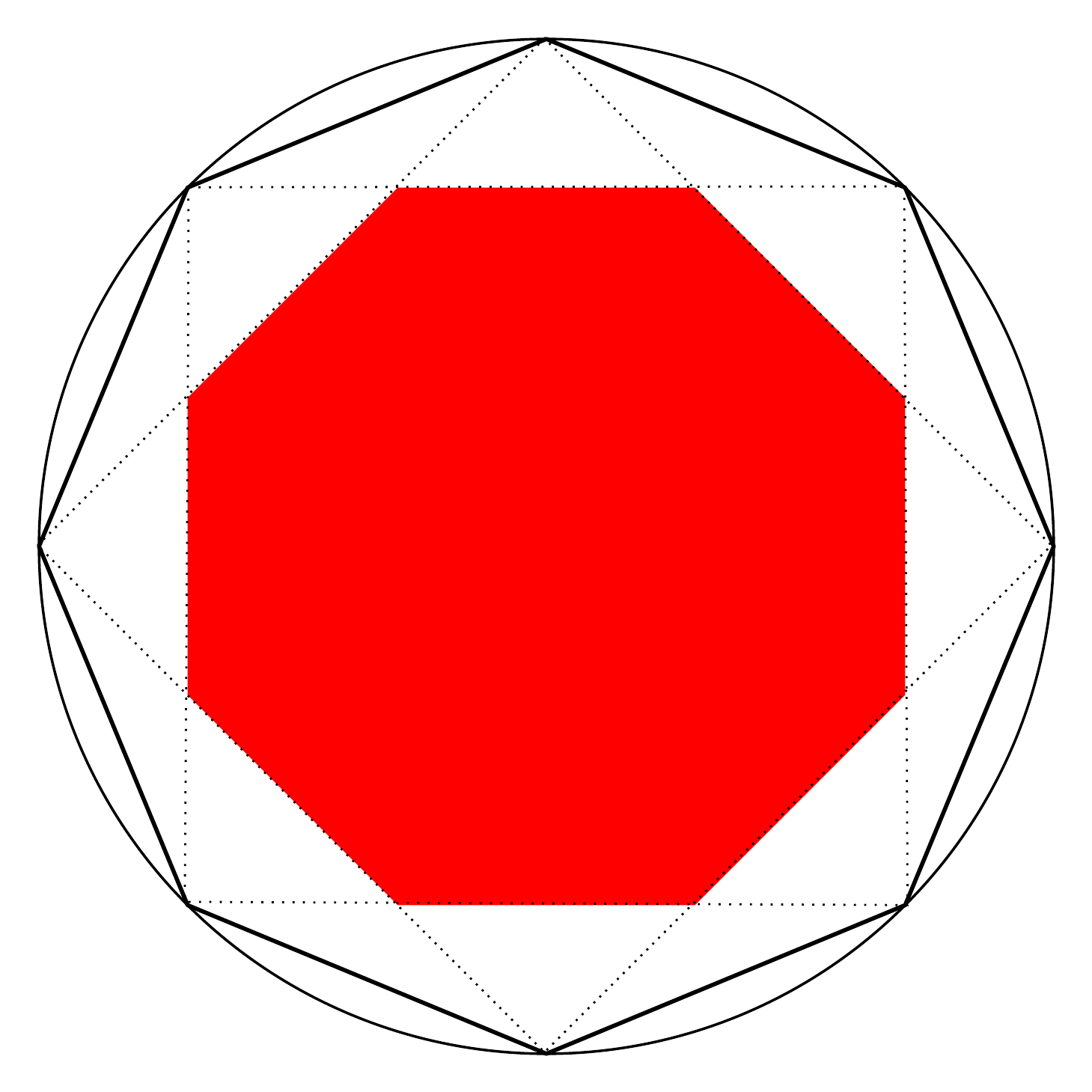}};
		\node[] at (0,0) {$O_{\pi}^h$};
	\end{tikzpicture}\hfill$ $\\[0.1cm]
	\hfill
	\begin{tikzpicture}[scale=0.29]
		\path[use as bounding box] (-10,-10) rectangle (10,10);
		\node at (0,0) {\includegraphics[width=0.44\textwidth]{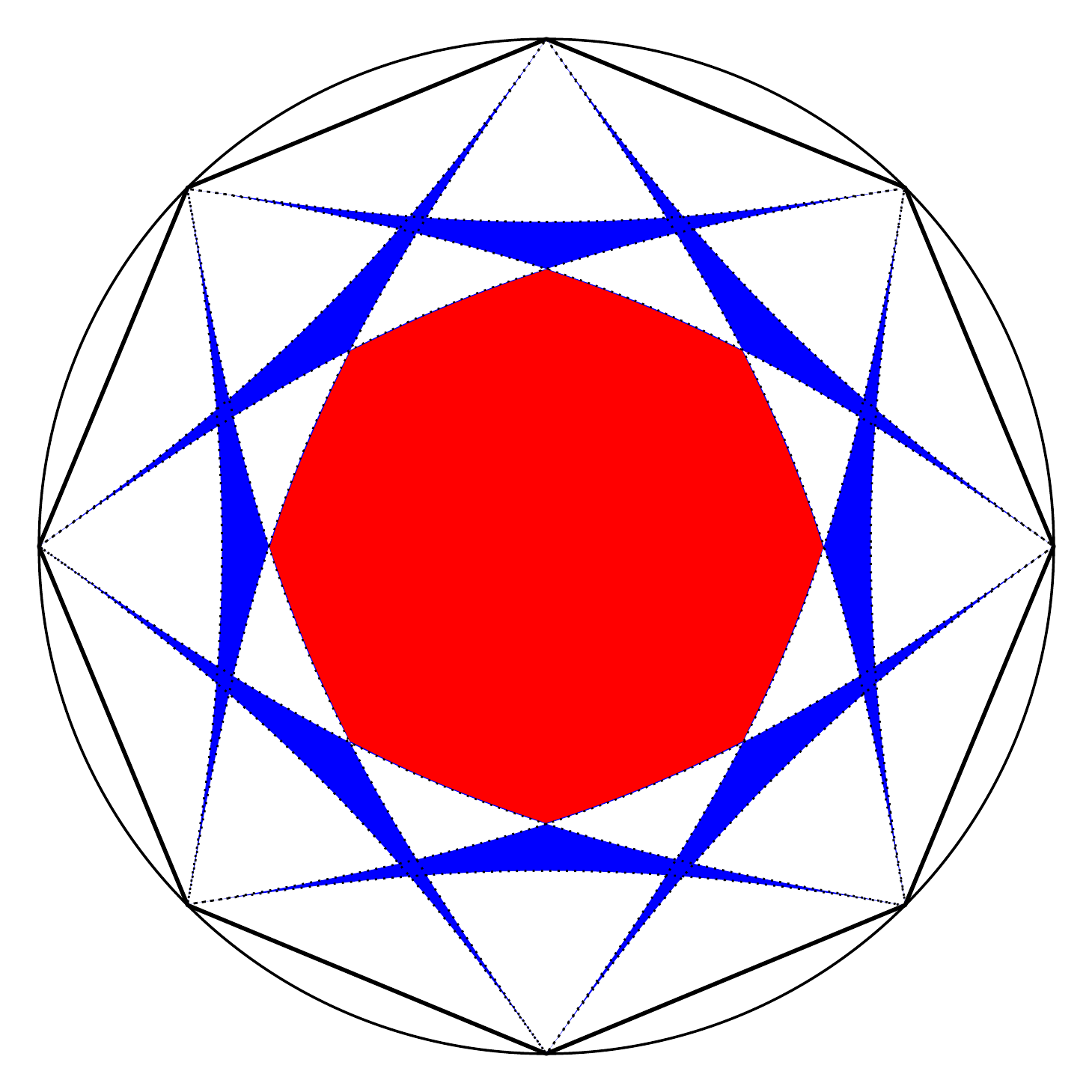}};
		\node[] at (0,0) {$O_{3\pi/2}^h$};
		\node[] at (-4,-6.5) {$E^h(3\pi/2)$};
	\end{tikzpicture}\hfill
	\begin{tikzpicture}[scale=0.29]
		\path[use as bounding box] (-10,-10) rectangle (10,10);
		\node at (0,0) {\includegraphics[width=0.44\textwidth]{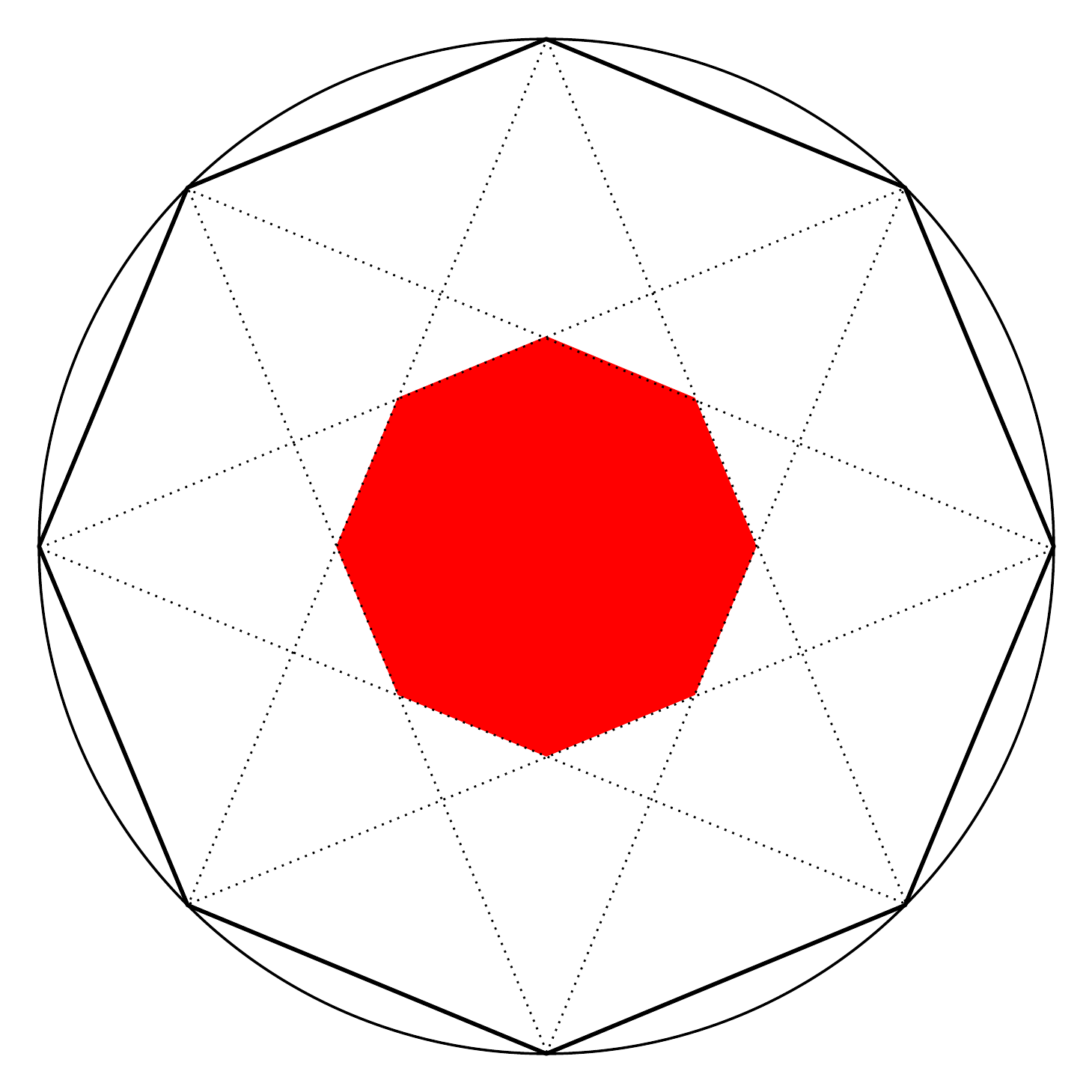}};
		\node[] at (0,0) {$O_{2\pi}^h$};
	\end{tikzpicture}\hfill$ $\\[0.1cm]
	\hfill
	\begin{tikzpicture}[scale=0.29]
		\path[use as bounding box] (-10,-10) rectangle (10,10);
		\node at (0,0) {\includegraphics[width=0.44\textwidth]{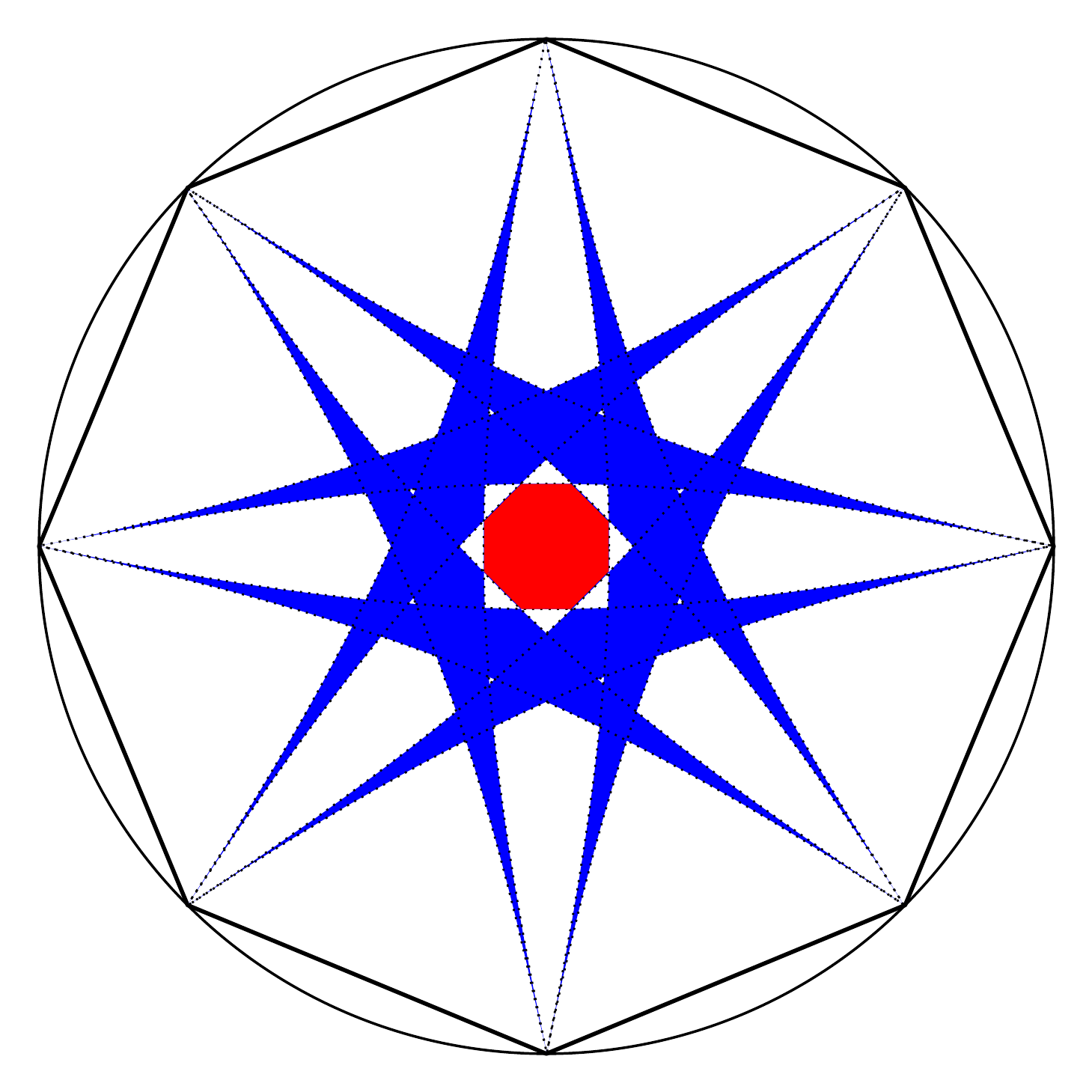}};
		\node[] at (0,0) {$O_{5\pi/2}^h$};
		\node[] at (-4,-6.5) {$E^h(5\pi/2)$};
	\end{tikzpicture}\hfill
	\begin{tikzpicture}[scale=0.29]
		\path[use as bounding box] (-10,-10) rectangle (10,10);
		\node at (0,0) {\includegraphics[width=0.44\textwidth]{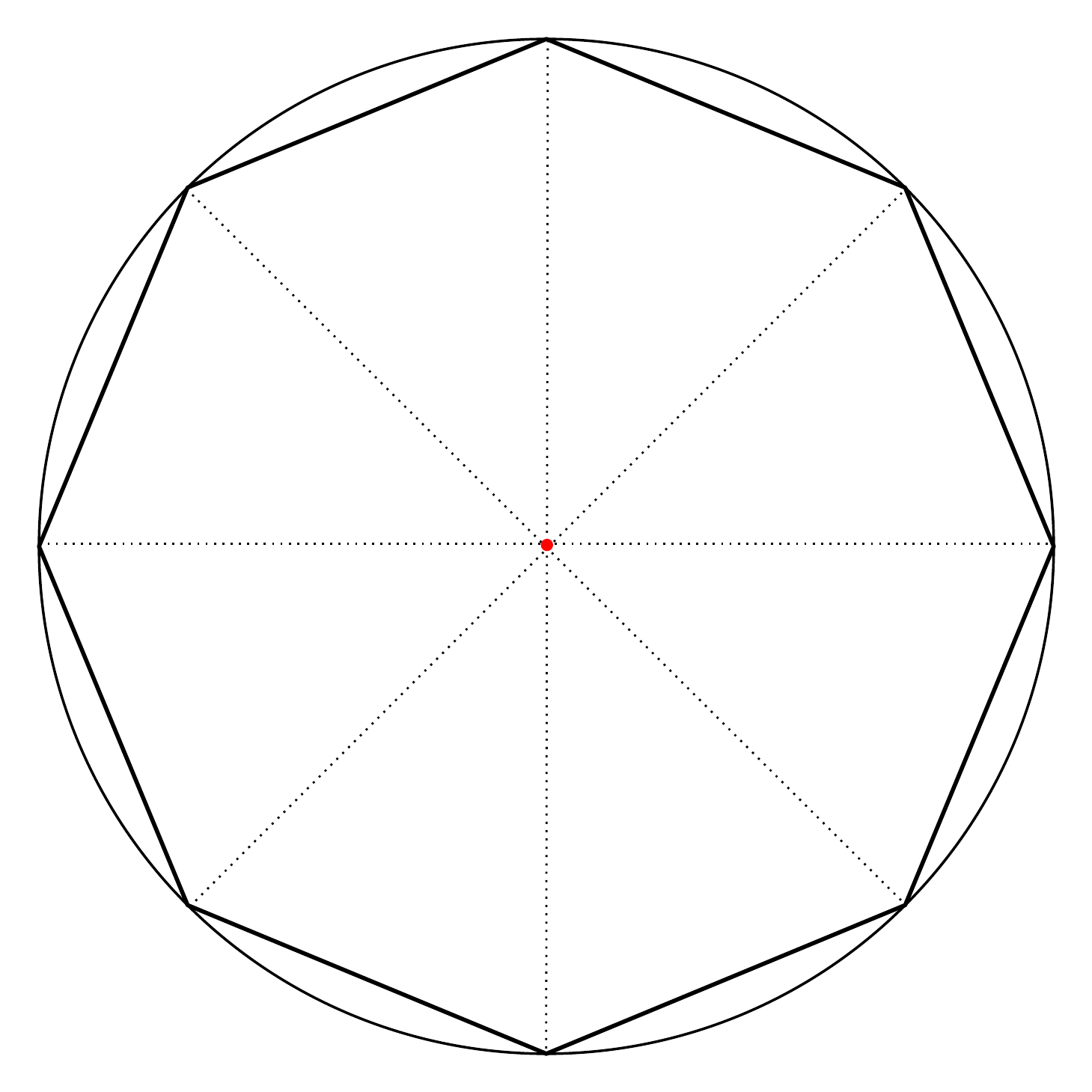}};
		\node[] at (1,1) {$O_{3\pi}^h$};
	\end{tikzpicture}\hfill$ $
	\caption{The envelope $E^h(\delta)$ (in blue) and the hyperbolic floating body $O_\delta^h$ (in red) of the regular ideal octagon $O$ for $\delta=\frac{\pi}{2}, \pi, \frac{3\pi}{2}, 2\pi, \frac{5\pi}{2}, 3\pi$ ($\vol_2^h(O)=6\pi$).}
	\label{fig:hyperbolic_octagon}
\end{figure}


\begin{thebibliography}{99}

\normalsize
\baselineskip=17pt

\bibitem{AFZ:2017}
	M.~Alexander, M.~Fradelizi and A.~Zvavitch,
	\emph{Polytopes of maximal volume product},
	2017, arXiv: \url{https://arxiv.org/abs/1708.07914}.
	
\bibitem{AAKSW:2012}
	S.~Artstein-Avidan, B.~Klartag, C.~Schütt and E.~M.~Werner,
	\emph{Functional affine-isoperimetry and an inverse logarithmic Sobolev inequality},
	J.\ Funct.\ Anal.\ \textbf{262} (2012), 4181--4204. 

\bibitem{BarBu:1993}
	I.~B{\'a}r{\'a}ny and C.~Buchta,
	\emph{Random polytopes in a convex polytope, independence of shape, and concentration of vertices},
	Math.\ Ann.\ \textbf{297} (1993), 467--497. 
  
\bibitem{BHRS:2017}  
	I.~B\'ar\'any, D.~Hug, M.~Reitzner and R.~Schneider, 
	\emph{Random points in halfspheres}, 
	Random Structures Algorithms \textbf{50} (2017), 3--22.

\bibitem{BarLar:1988}
	I.~B{\'a}r{\'a}ny and D.~G. Larman, 
	\emph{Convex bodies, economic cap coverings, random polytopes}, 
	Mathematika \textbf{35} (1988), 274--291.

\bibitem{BarLov:1982}
	I.~B{\'a}r{\'a}ny and L.~Lov{\'a}sz,
	\emph{Borsuk's theorem and the number of facets of centrally symmetric polytopes}, 
	Acta Math.\ Acad.\ Sci.\ Hungar.\ \textbf{40} (1982), 323--329. 

\bibitem{BBF:2014}
	F.~Barthe, K.~J.~Böröczky and M.~Fradelizi,
	\emph{Stability of the functional forms of the Blaschke-Santal{\'o} inequality},
	Monatsh.\ Math.\ \textbf{173} (2014), 135--159. 

\bibitem{BN:2008}
	A.~Barvinok and I.~Novik,
	\emph{A centrally symmetric version of the cyclic polytope},
	Discrete Comput.\ Geom.\ \textbf{39} (2008), 76--99. 
	
\bibitem{BLN:2013}
	A.~Barvinok, S.~J.~Lee and I.~Novik,
	\emph{Centrally symmetric polytopes with many faces},
	Israel J.\ Math.\ \textbf{195} (2013), 457--472. 
	
\bibitem{Bayer:1987}
	M.~Bayer,
	\emph{The extended $f$-vectors of $4$-polytopes},
	J.\ Combin.\ Theory Ser.\ A \textbf{44} (1987), 141--151.
	
\bibitem{BayerBillera:1985}
	M.~Bayer and L.~J.~Billera,
	\emph{Generalized Dehn-Sommerville relations for polytopes, spheres and Eulerian partially ordered sets},
	Invent.\ Math.\ \textbf{79} (1985), 143--157. 

\bibitem{BesSchu:2016}
	F.~Besau and F.~E.~Schuster, 
	\emph{Binary operations in spherical convex geometry},
	Indiana Univ.\ Math.\ J.\ \textbf{65} (2016), 1263--1288.
  
\bibitem{BesWer:2015}
	F.~Besau and E.~M.~Werner,
	\emph{The spherical convex floating body},
	Adv.\ Math.\ \textbf{301} (2016), 867--901.

\bibitem{BesWer:2016}
	F.~Besau and E.~M.~Werner,
	\emph{The floating body in real space forms},
	J.\ Differential Geom., in press.
	
\bibitem{BLW:2017}
	F.~Besau, M.~Ludwig, and E.~M.~Werner,
	\emph{Weighted floating bodies and polytopal approximation},
	Trans.\ Amer.\ Math.\ Soc., in press.

\bibitem{Billera:2010}
	L.~J.~Billera,
	\emph{Flag enumeration in polytopes, Eulerian partially ordered sets and Coxeter groups},
	Proceedings of the International Congress of Mathematicians (2010), 2389--2415.
	
\bibitem{BilleraEhrenborg:2000}
	L.~J.~Billera and R.~Ehrenborg,
	\emph{Monotonicity of the cd-index for polytopes},
	Math.\ Z.\ \textbf{233} (2000), 421--441. 
	
\bibitem{BilleraLee:1980}
	L.~J.~Billera and C.~W.~Lee,
	\emph{Sufficiency of McMullen's conditions for $f$-vectors of simplicial polytopes}, 
	Bull.\ Amer.\ Math.\ Soc.\ (N.S.) \textbf{2} (1980), 181--185. 
	
\bibitem{Blaschke:1923}
	W.~Blaschke, 
	\emph{Vorlesungen {\"u}ber {D}ifferentialgeometrie und geometrische {G}rundlagen von {E}insteins {R}elativit{\"a}tstheorie {II}.
	{A}ffine {D}ifferentialgeometrie ({G}erman)},
	Grundlehren der mathematischen Wissenschaften \textbf{7} (1923), Springer.

\bibitem{Boroczky:2010}
	K.~J.~B\"or\"oczky,
	\emph{Stability of the {B}laschke--{S}antal{\'o} and the affine isoperimetric inequality},
	Adv.\ Math.\ \textbf{225} (2010), 1914--1928.
	
\bibitem{BMMR:2013}
	K.~J.~Böröczky, E.~Jr.~Makai, M.~Meyer and S.~Reisner,
	\emph{On the volume product of planar polar convex bodies--lower estimates with stability},
	Studia Sci.\ Math.\ Hungar.\ \textbf{50} (2013), 159--198.

\bibitem{BouMil:1987}
	J.~Bourgain and V.~D.~Milman,
	\emph{New volume ratio properties for convex symmetric bodies in $R^n$}, 
	Invent.\ Math.\ \textbf{88} (1987), 319--340.
	
\bibitem{CagWer:2014}
	U.~Caglar and E.~M.~Werner,
	\emph{Divergence for $s$-concave and log concave functions},
	Adv.\ Math.\ \textbf{257} (2014), 219--247.
	
\bibitem{CagYe:2016}
	U.~Caglar and D.~Ye,
	\emph{Affine isoperimetric inequalities in the functional Orlicz-Brunn-Minkowski theory},
	Adv.\ in Appl.\ Math.\ \textbf{81} (2016), 78--114.
	
	
\bibitem{CFGLSW:2016}
	U.~Caglar, M.~Fradelizi, O.~Gu\'edon, J.~Lehec, C.~Sch\"utt and E.~M.~Werner,
	\emph{Functional versions of $L_p$-affine surface area and entropy inequalities},
	Int.\ Math.\ Res.\ Not.\ IMRN (2016), no.\ 4, 1223--1250.
	
	
\bibitem{DKY:2018}
	S.~Dann, J.~Kim and V.~Yaskin,
	\emph{Busemann's intersection inequality in hyperbolic and spherical spaces},
	Adv.\ Math.\ \textbf{326} (2018), 521--560.
	
\bibitem{DPP:2016}
	S.~Dann, G.~Paouris and P.~Pivovarov,
	\emph{Bounding marginal densities via affine isoperimetry},
	Proc.\ Lond.\ Math.\ Soc.\ \textbf{113} (2016), 140--162.
	
\bibitem{DeLRS:2010}
	J.~A.~De Loera, J.~Rambau and F.~Santos,
	\emph{Triangulations},
	Algorithms and Computation in Mathematics \textbf{25} (2010), Springer.

\bibitem{Ewald:1996}
 	G.~Ewald,
 	\emph{Combinatorial {C}onvexity and {A}lgebraic {G}eometry},
 	Graduate Texts in Mathematics \textbf{168} (1996), Springer.

\bibitem{FLM:1977}
	T.~Figiel, J.~Lindenstrauss and V.~D.~Milman,
	\emph{The dimension of almost spherical sections of convex bodies}, 
	Acta Math.\ \textbf{139} (1977), 53--94.

\bibitem{FHSZ:2013}
	R.~Freij, M.~Henze, M.~W.~Schmitt and G.~M.~Ziegler,
	\emph{Face numbers of centrally symmetric polytopes produced from split graphs},
	Electron.\ J.\ Combin.\ \textbf{20} (2013), Paper 32, 15 pp.
	
\bibitem{Gardner:2006}
	R.~J.~Gardner, 
	\emph{Geometric {T}omography},
	Encyclopedia of Mathematics and its Applications \textbf{58} (2006), Cambridge University Press.
	
%
	
\bibitem{GPV:2014}
	A.~Giannopoulos, G.~Paouris, B.-H.~Vritsiou,
	\emph{The isotropic position and the reverse Santal{\'o} inequality},
	Israel J.\ Math.\ \textbf{203} (2014), 1--22. 

\bibitem{Gruber:2007}
	P.~M.~Gruber, 
	\emph{Convex and {D}iscrete {G}eometry}, 
	Grundlehren der Mathematischen Wissenschaften \textbf{336} (2007), Springer.
	
\bibitem{Grunbaum:2003}
	B.~Grünbaum,
	\emph{Convex {P}olytopes},
	Graduate Texts in Mathematics \textbf{221} (2003), Springer.
	
\bibitem{HabPar:2014}
	C.~Haberl and L.~Parapatits,
	\emph{The centro-affine Hadwiger theorem},
	J.\ Amer.\ Math.\ Soc.\ \textbf{27} (2014), 685--705. 
	
\bibitem{HaberlSchuster:2009}
	C.~Haberl and F.~E.~Schuster,
	\emph{General $L_p$ affine isoperimetric inequalities},
	J.\ Differential Geom.\ \textbf{83} (2009), 1--26. 
	
\bibitem{Hanner:1956}
	O.~Hanner,
	\emph{Intersections of translates of convex bodies}, 
	Math.\ Scand.\ \textbf{4} (1956), 65--87.
	
%

\bibitem{IriShi:2017}
	H.~Iriyeh and M.~Shibata,
	\emph{Symmetric Mahler's conjecture for the volume product in the three dimensional case},
	2017, arXiv: \url{https://arxiv.org/abs/1706.01749}.
	
\bibitem{Ivaki:2015}
	M.~N.~Ivaki,
	\emph{Convex bodies with pinched Mahler volume under the centro-affine normal flows},
	Calc.\ Var.\ Partial Differential Equations \textbf{54} (2015), 831--846.
	
\bibitem{Kalai:1989}
	G.~Kalai,
	\emph{The number of faces of centrally-symmetric polytopes},
	Graphs Combin.\ \textbf{5} (1989), 389--391.

\bibitem{Kim:2014}
	J.~Kim,
	\emph{Minimal volume product near Hanner polytopes},
	J.\ Funct.\ Anal.\ \textbf{266} (2014), 2360--2402. 
	
\bibitem{Kuperberg:2008}
	G.~Kuperberg, 
	\emph{From the Mahler conjecture to Gauss linking integrals}, 
	Geom.\ Funct.\ Anal.\ \textbf{18} (2008), 870--892.

\bibitem{Leichtweiss:1986a}
	K.~Leichtweiss,
	\emph{Zur {A}ffinoberfl{\"a}che konvexer {K}{\"o}rper ({G}erman)},
	Manu\-scripta Math.\ \textbf{56} (1986), 429--464.

\bibitem{Leichtweiss:1988}
	K.~Leichtweiss,
	\emph{{\"U}ber einige {E}igenschaften der {A}ffinoberfl{\"a}che beliebiger konvexer {K}{\"o}rper ({G}erman)},
	Results Math.\ \textbf{13} (1988), 255--282.

\bibitem{Leichtweiss:1989}
	K.~Leichtweiss,
	\emph{{B}emerkungen zur {D}efinition einer erweiterten {A}ffinoberfl{\"a}che von {E}.\ {L}utwak ({G}erman)},
	Manuscripta Math.\ \textbf{65} (1989), 181--197.

\bibitem{LSW:2017}
	B.~Li, C. ~Sch\"utt and E.~M.~Werner,
	\emph{Floating functions},
	preprint (2017), arXiv: \url{https://arxiv.org/abs/1711.11088}.

\bibitem{LudRei:1999}
	M.~Ludwig and M.~Reitzner, 
	\emph{A characterization of affine surface area},
	Adv.\ Math.\ \textbf{147} (1999), 138--172.

\bibitem{LudRei:2010}
	M.~Ludwig and M.~Reitzner, 
	\emph{A classification of \,{${\rm SL}(n)$} invariant valuations}, 
	Ann.\ of Math.\ (2) \textbf{172} (2010), 1219--1267.

\bibitem{Lutwak:1985}
	E.~Lutwak,
	\emph{On the Blaschke-Santal{\'o} inequality},
	Ann.\ New York Acad.\ Sci.\ \textbf{440} (1985), 106--112.

\bibitem{Lutwak:1991}
	E.~Lutwak, 
	\emph{Extended affine surface area}, 
	Adv.\ Math.\ \textbf{85} (1991), 39--68.

\bibitem{Lutwak:1996}
	E.~Lutwak, 
	\emph{The {B}runn-{M}inkowski-{F}irey theory. {II}: {A}ffine and geominimal surface areas}, 
	Adv.\ Math.\ \textbf{118} (1996), 244--294.
	
\bibitem{Mahler:1939}
	K.~Mahler, 
	\emph{Ein Minimalproblem für konvexe Polygone (German)},
	Mathematica, Zutphen.\ B.\ \textbf{7} (1939), 118--127.
	
\bibitem{Mahler:1939a}
	K.~Mahler,
	\emph{Ein Übertragungsprinzip für konvexe Körper (German)},
	{\v C}asopis P{\v e}st.\ Mat.\ Fys.\ \textbf{68} (1939), 93--102. 
	
\bibitem{McMullen:1971}
	P.~McMullen,
	\emph{The numbers of faces of simplicial polytopes},
	Israel J.\ Math.\ \textbf{9} (1971), 559--570.
	
\bibitem{McMuShep:1971}
	P.~McMullen and G.~C.~Shephard,
	\emph{Convex polytopes and the upper bound conjecture},
	London Mathematical Society Lecture Note Series \textbf{3} (1971), Cambridge University Press.
	
\bibitem{Meyer:1991}
	M.~Meyer, 
	\emph{Convex bodies with minimal volume product in $R^2$},
	Monatsh.\ Math.\ \textbf{112} (1991), 297--301.
	
\bibitem{MeyerWerner:2000}
	M.~Meyer and  E.~M.~Werner,
	\emph{On the $p$-affine surface area}, 
	Adv.\ Math.\ \textbf{152} (2000), 288--313.

\bibitem{MordhorstWerner:2017}
	O.~Mordhorst and E.~M.~Werner,
	\emph{Duality of floating and illumination bodies},
	2017, arXiv: \url{https://arxiv.org/abs/1709.02424}.
	
\bibitem{MordhorstWerner:2017a}
	O.~Mordhorst and E.~M.~Werner,
	\emph{Duality of floating and illumination bodies for polytopes},
	2017, arXiv: \url{https://arxiv.org/abs/1709.02429}.

\bibitem{Ratcliffe:2006}
	J.~G.~Ratcliffe,
	\emph{Foundations of {H}yperbolic {M}anifolds}, 
	Graduate Texts in Mathematics \textbf{149} (2006), Springer.
	
\bibitem{Reisner:1986}
	S.~Reisner,
	\emph{Zonoids with minimal volume-product},
	Math.\ Z.\ \textbf{192} (1986), 339--346. 

\bibitem{SWZ:2009}
	R.~Sanyal, A.~Werner and G.~M.~Ziegler,
	\emph{On Kalai's conjectures concerning centrally symmetric polytopes},
	Discrete Comput.\ Geom.\ \textbf{41} (2009), 183--198.

\bibitem{Schneider:2014}
	R.~Schneider, 
	\emph{Convex {B}odies: the {B}runn-{M}inkowski {T}heory},
	Encyclopedia of Mathematics and its Applications \textbf{151} (2014),
	Cambridge University Press.

\bibitem{Schuett:1991}
	C.~Sch\"utt, 
	\emph{The convex floating body and polyhedral approximation},
	Israel J.\ Math.\ \textbf{73} (1991), 65--77.

\bibitem{SchuettWerner:1990}
	C.~Sch\"utt and E.~M.~Werner, 
	\emph{The convex floating body}, 
	Math.\ Scand.\ \textbf{66} (1990), 275--290.

\bibitem{SchuettWerner:1994}
	C.~Sch\"utt and E.~M.~Werner, 
	\emph{Homothetic floating bodies},
	Geom.\ Dedicata  \textbf{49} (1994), 335--348.

\bibitem{SchuettWerner:2004}
	C.~Sch\"utt and E.~M.~Werner, 
 	\emph{Surface bodies and {$p$}-affine surface area},
 	Adv.\ Math.\ \textbf{187} (2004), 98--145.

\bibitem{SjobergZiegler:2017}
	H.~Sjöberg and G.~M.~Ziegler,
	\emph{Semi-algebraic sets of $f$-vectors},
	2017, arXiv: \url{https://arxiv.org/abs/1711.01864}.
	
\bibitem{SjobergZiegler:2018}
	H.~Sjöberg, G.~M.~Ziegler,
	\emph{Characterizing face and flag vector pairs for polytopes},
	2018, arXiv: \url{https://arxiv.org/abs/1803.04801}.

\bibitem{Stancu:2012}
	A.~Stancu,
	\emph{Centro-affine invariants for smooth convex bodies},
	Int.\ Math.\ Res.\ Not.\ IMRN (2012), no.\ 10, 2289--2320. 
	
\bibitem{Stanley:1980}
	R.~P.~Stanley,
	\emph{The number of faces of a simplicial convex polytope}, 
	Adv.\ in Math.\ \textbf{35} (1980), 236--238. 
	
\bibitem{Stanley:1987}
	R.~P.~Stanley,
	\emph{On the number of faces of centrally-symmetric simplicial polytopes},
	Graphs Combin.\ \textbf{3} (1987), 55--66.
 
\bibitem{Steinitz:1906}
	E.~Steinitz,
	\textit{{\"U}ber die Eulerschen Polyederrelationen},
	Archiv f{\"u}r Mathematik und Physik \textbf{11} (1906), 86--88.

\bibitem{Werner:2002}
	E.~M.~Werner, 
	\emph{The {$p$}-affine surface area and geometric interpretations},
	Rend.\ Circ.\ Mat.\ Palermo (2) Suppl.\ (2002), 367--382.
	
\bibitem{Werner:2012}
	E.~M.~Werner, 
	\emph{R\'enyi Divergence and $L_p$-affine surface area for convex bodies},
	Adv.\  Math.\ \textbf{230}, (2012), 1040--1059.

\bibitem{WernerYe:2008}
	E.~M.~Werner and D.~Ye,
 	\emph{New $L_p$ affine isoperimetric inequalities},
	Adv.\ Math.\ \textbf{218} (2008), 762--780.

\bibitem{Ye:2016}
	D.~Ye,
	\emph{Dual Orlicz-Brunn-Minkowski theory: dual Orlicz $L_\phi$ affine and geominimal surface areas},
	J.\ Math.\ Anal.\ Appl.\ \textbf{443} (2016), 352--371.

\bibitem{Zhao:2016}
	Y.~Zhao,
	\emph{On $L_p$-affine surface area and curvature measures},
	Int.\ Math.\ Res.\ Not.\ IMRN (2016), no.\ 5, 1387--1423.
	
\bibitem{Ziegler:1995}
	G.~M.~Ziegler,
	\emph{Lectures on {P}olytopes},
	Graduate Texts in Mathematics \textbf{152} (1995), Springer.
\end{thebibliography}
\end{document}